\documentclass[11pt]{amsart}

\pdfoutput=1

\usepackage{amsmath} 
\allowdisplaybreaks 
\usepackage[foot]{amsaddr}

\usepackage{color}
\usepackage{appendix}
\usepackage{cancel}
\usepackage{esint,amssymb} 
\usepackage{graphicx}
\usepackage{MnSymbol}
\usepackage{mathtools} 
\usepackage{xcolor}

\usepackage{microtype}

\usepackage{bm}
\usepackage{dsfont}
\usepackage{mathrsfs}
\usepackage{enumitem}
\usepackage[colorlinks=true, pdfstartview=FitV, linkcolor=blue, citecolor=blue, urlcolor=blue,pagebackref=false]{hyperref}
\usepackage{orcidlink}

\definecolor{darkgreen}{rgb}{0,0.5,0}
\definecolor{darkblue}{rgb}{0,0,0.7}
\definecolor{darkred}{rgb}{0.9,0.1,0.1}

\newtheorem{proposition}{Proposition}
\newtheorem{theorem}[proposition]{Theorem}
\newtheorem{lemma}[proposition]{Lemma}
\newtheorem{corollary}[proposition]{Corollary}

\theoremstyle{remark}
\newtheorem{remark}[proposition]{Remark}

\theoremstyle{definition}
\newtheorem{definition}[proposition]{Definition}

\numberwithin{equation}{section}
\numberwithin{proposition}{section}
\numberwithin{figure}{section}
\numberwithin{table}{section}

\newcommand{\Z}{\mathbb{Z}}
\newcommand{\N}{\mathbb{N}}
\newcommand{\Q}{\mathbb{Q}}
\newcommand{\R}{\mathbb{R}}

\newcommand{\E}{\mathbb{E}}
\renewcommand{\P}{\mathbb{P}}

\renewcommand{\leq}{\leqslant}
\renewcommand{\geq}{\geqslant}

\renewcommand{\subset}{\subseteq}
\renewcommand{\bar}{\overline}
\renewcommand{\tilde}{\widetilde}

\newcommand{\Ll}{\left}
\newcommand{\Rr}{\right}
\renewcommand{\d}{\mathrm{d}}

\newcommand{\mcl}{\mathcal}
\newcommand{\msf}{\mathsf}
\newcommand{\mfk}{\mathfrak}
\newcommand{\msc}{\mathscr}

\newcommand{\eps}{\varepsilon}

\newcommand{\la}{\left\langle}
\newcommand{\ra}{\right\rangle}

\newcommand{\Id}{\mathsf{Id}}
\newcommand{\id}{\mathsf{id}}

\renewcommand{\S}{\mathrm{S}}

\DeclareMathOperator{\supp}{supp}

\newcommand{\upa}{\uparrow}

\newcommand{\fR}{\mathfrak{R}}
\newcommand{\scdot}{\,\cdot\,} 
\newcommand{\sfr}{{\mathsf{r}}}
\newcommand{\sfp}{{\mathsf{p}}}
\newcommand{\sfq}{{\mathsf{q}}}
\newcommand{\sfs}{{\mathsf{s}}}
\newcommand{\sS}{\mathscr{S}}
\newcommand{\s}{{\mathsf{s}}}

\newenvironment{e}{\begin{equation}}{\end{equation}\ignorespacesafterend}
\newenvironment{e*}{\begin{equation*}}{\end{equation*}\ignorespacesafterend}

\begin{document}

\author{Hong-Bin Chen\,\orcidlink{0000-0001-6412-0800}}
\address[Hong-Bin Chen]{NYU-ECNU Institute of Mathematical Sciences, NYU Shanghai, China}
\email{\href{mailto:hongbin.chen@nyu.edu}{hongbin.chen@nyu.edu}}

\author{Victor Issa\,\orcidlink{0009-0009-1304-046X}}
\address[Victor Issa]{Department of Mathematics, ENS Lyon, France}
\email{\href{mailto:victor.issa@ens-lyon.fr}{victor.issa@ens-lyon.fr}}

\keywords{}
\subjclass[2010]{}

\title[Vector spin glasses with Mattis interaction]{Vector spin glasses\\ with Mattis interaction II:\\ non-convex high-temperature models}

\begin{abstract}

    This paper constitutes the second part of a two-paper series devoted to the systematic study of vector spin glass models whose energy function involves a spin glass part and a general Mattis interaction part. 

    In this paper, we focus on models whose spin glass part \emph{does not} satisfy the usual convexity assumption. In this case, the Parisi formula breaks down, and there are no known methods to fully identify the limit free energy. It was suggested in \cite{mourrat2019parisi} that the limit free energy may be described using the unique solution of a partial differential equation of Hamilton--Jacobi type. In the present paper, we prove the validity of this conjecture in the high-temperature regime and provide an explicit representation for the free energy in terms of critical points. 

    Using the duality between the free energy and large deviation principles, one can then easily deduce from the previous result a large deviation principle for the mean magnetization as well as a representation for the free energy of spin glass models with additional Mattis interaction at high temperature.
    
    In the companion paper~\cite{MattisI}, we establish similar results at all temperatures for models whose spin glass part is assumed to satisfy the usual convexity assumption.

    \bigskip

    \noindent \textsc{Keywords and phrases: Disordered systems, Large deviation principles, Spin glasses, Hamilton--Jacobi equations.}  

    \medskip

    \noindent \textsc{MSC 2020: 82B44, 
                 60F10, 
                 35F21.
                 }
                 
\end{abstract}

\maketitle

\thispagestyle{empty}
{
  \hypersetup{linkcolor=black}
  \setcounter{tocdepth}{1}
  \tableofcontents
}


\section{Introduction}

\subsection{Preamble}

This paper is the second paper of a two-part series in which we investigate general mean-field spin glass models with Mattis interaction. In this paper, we focus on models whose spin glass part does not satisfy the usual convexity assumption.\footnote{By the usual convexity assumption, we mean the assumption that the function $\xi: \R^{D \times D} \to \R$ defined below in \eqref{e.cov}, is convex on the set of symmetric positive semi-definite matrices. For the model \eqref{e.simple}, we have $D = 2$ and $\xi(a) = a_{11}a_{22}.$} An important model in this class is a specific disordered variant of the so-called Restricted Boltzmann Machine, which can be naturally mapped to a bipartite Sherrington-Kirkpatrick spin glass featuring Mattis interactions. 
This model consists of a system of $\pm 1$ spins interacting through the following energy function 
\begin{equation} \label{e.simple}
    E_N(\sigma) = \frac{\beta}{\sqrt{N}} \sum_{i,j = 1}^N W_{ij} \sigma_{1,i} \sigma_{2,j} + \sum_{i = 1}^N \chi^1_i \sigma_{1,i} +  \sum_{i = 1}^N \chi^2_i \sigma_{2,i}, 
\end{equation}
where $\sigma =(\sigma_1,\sigma_2) \in \{-1,1\}^N \times \{-1,1\}^N$ is the bipartite spin configuration, and the couplings $(W_{ij})_{1 \leq i,j \leq N}$ are independent centered Gaussian random variables with variance $1$, representing the quenched random weights of the network. Furthermore, the vectors $(\chi_i^1)_{1 \leq i \leq N}$ and $(\chi^2_i)_{1 \leq i \leq N}$ are i.i.d.\ centered $\pm 1$ Bernoulli random variables that define the quenched Mattis interactions, acting as planted patterns for the two respective layers.

Restricted Boltzmann Machines are significant because they provide a simple yet powerful framework for unsupervised learning. In this analogy, the spins correspond to binary units, the couplings to weights, and learning amounts to shaping an energy landscape with many metastable states \cite{tubiana2018RBM,smolensky1986harmonytheory}.  This connection has enabled fruitful exchanges between machine learning and spin glass theory, with tools from statistical mechanics helping to analyze restricted Boltzmann machines' phase behavior, expressivity, and learning dynamics \cite{barra2018rbm,franz1997phase}. Some of this work has been produced by John Hopfield and Geoffrey Hinton, to whom the Nobel committee awarded the 2024 Physics prize, citing in particular their foundational works on these models and their underlying physical analogies \cite{ackley1985learning,hinton2002training,hopfield1982collective,hopfield1984graded}. 

In this series of papers~\cite{MattisI,MattisII}, our interest is twofold. First, we identify the large $N$ limit of the free energy defined by  
\begin{equation} \label{e.F_Nbasic}
    f_N = -\frac{1}{N} \log \left( \frac{1}{4^N} \sum_{\sigma \in \{-1,1\}^N \times \{-1,1\}^N} e^{E_N(\sigma)} \right),
\end{equation}
under the assumption that $\beta$ is small. Second, assuming the same condition on $\beta$, we establish large deviation principles (see Definition~\ref{d.ldp}) for the $\R^2$-valued mean magnetization
\begin{align*}
    m_N= (m_{N,1},m_{N,2}) = \Ll(\frac{1}{N} \sum_{i = 1}^N \chi^1_i \sigma_{1,i},\quad  \frac{1}{N} \sum_{i = 1}^N \chi^2_i \sigma_{2,i}\Rr)
\end{align*}
under the Gibbs measure $\la \cdot \ra_N$ defined by 
\begin{equation*}
 \la g(\sigma) \ra_N \propto  \frac{1}{4^N} \sum_{\sigma \in \{-1,1\}^N \times \{-1,1\}^N } g(\sigma) e^{E_N(\sigma)}.
\end{equation*}

For models without Mattis interaction that satisfy the usual convexity assumption, the free energy can be computed using the Parisi formula \cite{pan.potts,pan.vec}. For models such as \eqref{e.simple}, computing the limit free energy, even in the absence of Mattis interaction, is a difficult problem that has so far not been resolved. Under the assumption that $\beta$ is small enough, this problem becomes significantly easier. For certain models it has been shown that in this regime, the system exhibits a property known as \emph{replica symmetry} \cite{dey2021msk,wu2024thouless}. In this case, the limit free energy can be linked to the solution of a finite dimensional partial differential equation \cite{guerra2001sum,barra2,abarra,barramulti,barra2014quantum,barra2008mean,genovese2009mechanical}. In an attempt to tackle the computation of the free energy outside of the replica symmetric phase (i.e. at high $\beta$), a new approach has been put forward relying on an infinite dimensional partial differential equation \cite{HJbook,mourrat2019parisi,mourrat2020nonconvex,mourrat2020free}. In this setting, one introduces an enriched version of the model depending on an additional parameter $q$ which can be encoded as a càdlàg function $[0,1) \to \R_+^2$ with increasing coordinates. Here, and throughout, we write $ \R_+=[0,\infty)$.

Conjecturally, the large $N$ limit of the free energy of this enriched model is the unique solution of a partial differential equation of Hamilton--Jacobi type with time parameter $t = \beta^2/2$ and space parameter $q$. When one chooses $q$ to be the constant path taking the value $0 \in \R^2_+$, the free energy of the enriched model is equal to the free energy of the original model, and the system is replica symmetric in the low $\beta$ regime. But, when $q$ is chosen to be a path with \emph{strictly} increasing coordinates, the system corresponding to the enriched model is not replica symmetric, even in the low $\beta$ regime\footnote{The system exhibits infinitely many steps of replica symmetry breaking in the small $\beta$ regime in this case, see Remark~\ref{r.rsb}.}. Therefore, the verification of this conjectural link between the free energy and the Hamilton--Jacobi equation is highly non-trivial, even in the low $\beta$ regime. We dive deeper into this issue of replica symmetry breaking versus replica symmetry in Section~\ref{ss.proj}.

Our main results are stated for very general models in Theorem~\ref{t.free_energy} and Theorem~\ref{t.ldp}. The first main contribution of this paper is a verification, in the low $\beta$ regime, of this link between the free energy and the solution of an infinite dimensional PDE for a very general class of non-convex spin glass models. This yields an implicit representation for the free energy of enriched versions of those models. When the enrichment parameter is set to $0$, this result reduces to a statement about the free energy of a replica symmetric system, but otherwise this is not the case. From this, using well-known tools from large deviation theory, we will identify the limit free energy of models with Mattis interaction and establish a large deviation principle for the mean magnetization, still in the low $\beta$ regime.  

The application of our main results to the model in~\eqref{e.simple} can be summarized as follows.
We let $\eta$ be a standard real-valued Gaussian random variable, and $\chi$ be a centered $\pm 1$ Bernoulli random variable. For $y \in \R^2_+$ and $x \in \R^2$, we define
\begin{align*}
    \phi(y;x) =  \sum_{i=1}^2 \Ll( - \E \log \cosh(\sqrt{2y_i} \eta + x_i \chi) + y_i\Rr).
\end{align*}
%
For some $\beta_\star$ specified in the theorem below, we will show in Proposition~\ref{p.f-d-proj} that, for every $x\in\R^2$, the partial differential equation 
\begin{align*}
    \begin{cases}
        \partial_t g(t,y;x) - \partial_{y_1}g(t,y;x) \partial_{y_2}g(t,y;x)=0 \\ 
        g(0,y;x) = \phi(y;x) 
    \end{cases}
\end{align*}
admits a unique differentiable solution $g(\cdot,\scdot;x)$ on $[0,\beta^2_\star/2)\times\R^2_+$.

 \begin{theorem}\label{t.parisi+ldp.basic} 
    There exists $\beta_\star \geq \frac{1}{2\sqrt{2}}$ such that for every $\beta < \beta_\star$ and almost every realization of $(W_{i,j})_{i,j}$, $(\chi^1_i)_i$ and $(\chi^2_i)_i$, the following holds.  We have 
    \begin{e} \label{e.limf_nbasic}
        \lim_{N \to +\infty} f_N = \inf_{m \in \R^2}\sup_{x \in \R^2}\Ll\{g\Ll(\tfrac{\beta^2}{2},0;x\Rr)+m\cdot x-(m_1+m_2)\Rr\}- \tfrac{\beta^2}{2},
    \end{e}
    and the $\R^2$-valued mean magnetization $m_N$ satisfies a large deviation principle under $\langle \cdot \rangle_N$ with rate function $J$ defined for $m\in\R^2$ by
    \begin{e*}
        J(m) = -(m_1+m_2) + g^*\left( \tfrac{\beta^2}{2}, 0;m\right) + \sup_{m' \in \R^2} \left\{m'_1+m'_2 - g^*\left( \tfrac{\beta^2}{2}, 0;m'\right) \right\},
    \end{e*}
    where $ g^*\left( \tfrac{\beta^2}{2},0;m\right) = \sup_{x \in \R^2} \left\{x\cdot m  + g\left( \tfrac{\beta^2}{2},0;x\right)\right\}.$
\end{theorem}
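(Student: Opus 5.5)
The plan is to derive Theorem~\ref{t.parisi+ldp.basic} as a specialization of the general results (Theorem~\ref{t.free_energy} and Theorem~\ref{t.ldp}). We combine them with the finite-dimensional reduction of Proposition~\ref{p.f-d-proj} and standard Laplace--Varadhan duality.

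\emph{Step 1: tilted free energy.} For $x\in\R^2$, consider the tilted free energy
\[
F_N(x) = -\tfrac1N\log \tfrac{1}{4^N}\sum_\sigma \exp\Ll(\tfrac{\beta}{\sqrt N}\sum_{i,j} W_{ij}\sigma_{1,i}\sigma_{2,j} + N x\cdot m_N(\sigma)\Rr).
\]
This is the spin glass model with $D=2$ and $\xi(a)=a_{11}a_{22}$, with a Mattis field of strength $x$.

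\emph{Step 2: identify the limit via the enriched model.} Introduce the enriched free energy $F_N(t,q;x)$, where $q$ is a path with values in $\R^2_+$. Theorem~\ref{t.free_energy} states that for $t<\beta_\star^2/2$ this converges to the solution $f$ of the infinite-dimensional Hamilton--Jacobi equation $\partial_t f - \int \xi(\partial_q f)=0$, with initial condition $\psi(q;x)$. At $q=0$ the relevant quantity reduces to $g(t,0;x)$. The reason is Proposition~\ref{p.f-d-proj}: the solution restricted to constant paths $q\equiv y$ solves the projected equation $\partial_t g-\partial_{y_1}g\,\partial_{y_2}g=0$. Its initial condition is $\phi(y;x)$, computed from the Gaussian field $\sqrt{2y_i}\eta$ plus the Mattis field $x_i\chi$ on $\pm1$ spins. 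The $+y_i$ term is the normalizing correction.

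The main obstacle is matching normalizations. The enriched Hamiltonian carries a self-overlap correction, and with $|\sigma_{k,i}|=1$ one has $\xi(\text{self-overlap})=1$. This correction produces the constant $-\beta^2/2$. Similarly, the sign conventions (the free energy is a negative log) turn $\E\log\cosh$ into the expression $\phi$. The terms $-(m_1+m_2)$ in \eqref{e.limf_nbasic} and $J$ must be accounted for in the same way: I expect the general theorem's Mattis term to be written relative to a reference field, so these linear terms come from that shift. I will fix this by checking the case $\beta=0$ directly, where $F_N(x)$ is explicit through $\log\cosh$, and matching constants. Once matched, almost-sure convergence of $F_N(x)$ holds for each $x$ in a countable dense set. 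Since $F_N$ is concave in $x$, this extends to all $x$ simultaneously.

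\emph{Step 3: large deviations and \eqref{e.limf_nbasic}.} The limit $\Lambda(x)=\lim\frac1N\log\la e^{N x\cdot m_N}\ra_N$ equals $F_N(0)-F_N(x)$ in the limit. It is finite and, by Step 2, differentiable in $x$, since $g$ is differentiable. The magnetizations are bounded, so exponential tightness is automatic. The Gärtner--Ellis theorem then gives the LDP for $m_N$ with rate function $\Lambda^*$. Writing $\Lambda$ in terms of $g$, this is exactly $J$: the supremum term is the normalization $\inf\Lambda^*=0$, and $g^*$ is the stated Legendre-type transform. The formula for $\lim f_N$ follows from Varadhan's lemma or direct duality, $-\lim\frac1N\log E = \inf_m\{\cdots\}$, applied to the Mattis part expressed through $m_N$. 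It becomes the $\inf_m\sup_x$ expression after substituting the identity from Step 2. For the explicit bound $\beta_\star\ge 1/(2\sqrt2)$, I would take the high-temperature threshold from the general theorem, which is expressed in terms of bounds on $\xi$ and $\nabla\xi$ on the unit ball. I would evaluate it for $\xi(a)=a_{11}a_{22}$ with $|a_{kk}|\le1$.
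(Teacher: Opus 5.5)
Your overall strategy is the paper's: Theorem~\ref{t.parisi+ldp.basic} comes from specializing Theorems~\ref{t.free_energy}, \ref{t.ldp} and~\ref{t.rs} at $t=\beta^2/2$, $q=\hat 0$. Because $g$ is stated on $\R^2_+$, the paper uses the multi-type versions in Appendix~\ref{s.multi-sp}. Your account of the constant $-\beta^2/2$ and of $\phi$ is correct.

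\textbf{The gap: the Mattis term is misidentified.} This bookkeeping is exactly what produces the stated formulas, and you have it wrong. The terms $-(m_1+m_2)$ do not come from a ``reference field'', and no $\beta=0$ calibration is involved. The Mattis part of \eqref{e.simple} is exactly $NG(m_N)$ with $h(\tau,\chi)=(\tau_1\chi_1,\tau_2\chi_2)$ and $G(m)=m_1+m_2$. So $-(m_1+m_2)$ is literally the $-G(m)$ of \eqref{e.free_energy} and \eqref{e.I^G=}, with $f(\tfrac{\beta^2}{2},\hat 0;\cdot)=g(\tfrac{\beta^2}{2},0;\cdot)$ and hence $f^*=g^*$.

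\textbf{Consequence: Step~3 fails as written.} The measure $\langle\cdot\rangle_N$ in the statement already carries the tilt $e\cdot m_N$ with $e=(1,1)$. With your $F_N(y)$ from Step~1,
\[
\tfrac1N\log\langle e^{Nx\cdot m_N}\rangle_N=F_N(e)-F_N(e+x),
\]
not $F_N(0)-F_N(x)$. The latter is the log-moment generating function under the Mattis-free measure. Writing $g=g(\tfrac{\beta^2}{2},0;\cdot)$, its Legendre transform is $g^*(m)-g(0)$. This differs from $J(m)$ by the non-constant term $g(0)-g(e)-(m_1+m_2)$, so your claim that $\Lambda^*$ is ``exactly $J$'' is false.

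\textbf{The repair.}
\begin{itemize}
\item With $\Lambda(x)=g(e)-g(e+x)$, differentiable by Lemma~\ref{l.varphi}, G\"artner--Ellis gives $\Lambda^*(m)=g^*(m)-(m_1+m_2)-g(e)$. This equals $J$: since $-g$ is convex and finite, biconjugation gives $\sup_{m'}\{m'_1+m'_2-g^*(m')\}=-g(e)$.
\item The same identity shows the right-hand side of \eqref{e.limf_nbasic} equals $g(\tfrac{\beta^2}{2},0;e)-\tfrac{\beta^2}{2}$. This is $\lim F_N(e)$ by Corollary~\ref{c.limF_N=f}.
\item Done this way, your route is leaner than citing Theorem~\ref{t.ldp}. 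For linear $G$ the Gibbs measure is itself a linear tilt, so the Varadhan/Bryc step the paper needs for general continuous $G$ is unnecessary.
\end{itemize}

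\textbf{Two smaller corrections.}
\begin{itemize}
\item The threshold does not come from bounds on $\xi$ and $\nabla\xi$. By \eqref{e.t_star=} and \eqref{e.p.psi_smooth_2}, $t_\star^{-1}\le 16\,\|\nabla\xi\lfloor_B\|_{\mathrm{Lip}}$. For $\xi(a)=a_{11}a_{22}$, $\nabla\xi(a)$ is diagonal with entries $a_{22},a_{11}$, which is $1$-Lipschitz. Hence $t_\star\ge 1/16$ and $\beta_\star=\sqrt{2t_\star}\ge 1/(2\sqrt2)$.
\item Almost-sure convergence of $F_N(x)$ needs the concentration in Remark~\ref{r.concentration}, since the general convergence results are stated for $\bar F_N$.
\end{itemize}
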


\begin{remark} \label{r.RBM_fixed_point}
We mention that the quantity $g\Ll(\tfrac{\beta^2}{2},0;x\Rr)$ can be understood in more familiar terms using a critical point representation. For $t < \beta_*^2/2$, there is a unique solution $z^x =(z^x_1,z^x_2) \in\R^2_+$ to the fixed point equation 
\begin{align}\label{e.RBM_fixed_point}
    z=\nabla_y\phi(t(z_2,z_1);x)\quad\text{for }z\in\R^2_+,
\end{align}
and we have
\begin{align}\label{e.RBM_crit_point}
    g\Ll(t,0;x\Rr)=\phi\Ll(t(z^x_2,z^x_1);x\Rr)-tz^x_1z^x_2.
\end{align}
In addition, we also have that $\nabla_y g(t,0;x)= z^x$. \qed
\end{remark}
We refer to~\eqref{e.RBM_crit_point} as a critical-point representation since $z^x$ as a solution of~\eqref{e.RBM_fixed_point} is a critical point of the function on the right-hand side of~\eqref{e.RBM_crit_point}. Also, we do not expect that the lower bound $\frac{1}{2\sqrt{2}}$ for $\beta_\star$ is sharp.

To the best of our knowledge, with the presence of Mattis interaction, identifying the limit free energy at high temperature without additional assumptions on replica symmetry has not been achieved. However, much progress has been made in related spin glass systems and neural networks. For two-layer bipartite models, such as the analogical neural network and standard Restricted Boltzmann Machines, the replica-symmetric (RS) ansatz has been derived using interpolation and self-averaging properties \cite{barra2010rsapprox, agliari2019free}. These methods have been extended to multi-layer geometries like Deep Boltzmann Machines. In this context, the RS ansatz can be bounded by Sherrington-Kirkpatrick models in the annealed region \cite{alberici2020annealing} and expressed through minimax formulas \cite{genovese2022minimax}.  Other deep architectures rely on transport equations to handle RS and one-step replica symmetry breaking (1RSB) \cite{agliari2021transport}, while exact solutions without explicit RS assumptions are limited to specific regimes like the Nishimori line \cite{alberici2021solution}. Furthermore, the existence of the infinite-volume thermodynamic limit for the free energy in mean field neural networks has been rigorously established by assuming the measure-concentration of order parameters \cite{agliari2024thermodynamic}. In short, computing the limit free energy in all these works requires either imposing a replica-symmetric assumption (or limited symmetry-breaking) or restricting the analysis to specific regimes where such symmetry naturally holds.

For models satisfying the usual convexity assumption, computing the free energy in the absence of Mattis interaction is a classical problem, and it is now well known that the limit of the free energy can be understood as the supremum of an explicit functional, which is the celebrated Parisi formula \cite{parisi1979infinite,gue03,Tpaper}. 
For those models, the free energy can also be studied from the perspective of Hamilton--Jacobi equations, which leads to a more systematic and general treatment. The recent developments regarding this are summarized in~\cite{mourrat2025spin}.

On the other hand, models with Mattis interaction have not yet received a similar systematic treatment. The identification of their free energy relies on model-dependent proofs. Furthermore, large deviation principles for the mean magnetization have traditionally been established by computing a constrained version of the free energy 
\begin{equation*}
    f_N(A) = -\frac{1}{N} \log \left( \frac{1}{4^N} \sum_{\sigma \in A} e^{E_N(\sigma)} \right),
\end{equation*}
which leads to technical proofs \cite{guionnet2025estimating,franz2020largedeviations,camilli2022mismatch,pan.vec,chen2014mixedSK}. Similar techniques have also been employed to prove large deviation principles in the context of classical spin glass models  \cite{jagannath2018spectralgap,jagannath2020tensorpca}, see also \cite{ko2020multiplespherical}. For spin glass models with additional Mattis interaction, there are no known general results that allow for a systematic identification of the free energy nor the verification of large deviation principles for the mean magnetization. 

In this series of papers, we resolve this problem and establish systematic results using a new approach that relies on the computation of the following object
\begin{equation*}
    f_N^G = -\frac{1}{N} \log \left( \frac{1}{2^N} \sum_{\sigma \in \{-1,1\}^N \times \{-1,1\}^N} \exp\left(\frac{\beta}{\sqrt{N}} \sum_{i,j = 1}^N W_{ij} \sigma_{1,i} \sigma_{2,j} + NG(m_N) \right) \right).
\end{equation*}
The main difference of our approach is that the continuous function $G : \R^2 \to \R$ encoding the Mattis interaction is now treated as a \emph{parameter of the model} instead of being fixed. Furthermore, as opposed to the traditional approach, we do not impose constraints on the spin configuration $\sigma$. This allows for smoother, shorter, and less technical computations that one can carry out in a model-independent way.

To compute $\lim_{N \to +\infty} f_N^G$, we can proceed as follows. We let $\langle \cdot \rangle^G_N$ denote the Gibbs measure associated with $f_N^G$. We start with the case $G(m) = x \cdot m$. Under this assumption, the sum structure of  
\begin{e*}
    N G(m_N) = x_1 \sum_{i = 1}^N  \chi_i^1 \sigma_{1,i} + x_2 \sum_{i = 1}^N  \chi_i^2 \sigma_{2,i}
\end{e*}
makes the computation of $\lim_{N \to +\infty} f_N^{m \mapsto x\cdot m}$ extremely similar to the computation of $\lim_{N \to +\infty} f_N^{0}$. We show that in this case the free energy can be computed, at least in the small $\beta$ region, using a critical point representation for $\lim_{N \to +\infty} f_N^{m \mapsto x\cdot m}$ in the style of Remark~\ref{r.RBM_fixed_point}. This representation can then be connected to the aforementioned Hamilton--Jacobi equation.
Then, as a direct application of the Gärtner–Ellis theorem, we deduce from the knowledge of $\lim_{N \to +\infty} f_N^{m \mapsto x\cdot m}$ for all $x \in \R^2$, a large deviation principle for $m_N$ under $\langle \cdot \rangle_N^0$ with an explicit rate function $I$. Varadhan's lemma then implies that for every continuous function $G$,
\begin{e*}
    \lim_{N \to +\infty} \frac{1}{N} \log \left\langle e^{N G(m_N) } \right\rangle_N^{0} = \sup_{m} \left\{ G(m) - I(m) \right\}.
\end{e*}
Observing that $\frac{1}{N} \log \left\langle e^{N G(m_N) } \right\rangle_N^{0} = f_N^G - f_N^0$ it follows that $\lim_{N \to +\infty} f_N^G$ can be computed. Finally, as an application of Bryc’s inverse Varadhan lemma, we deduce from the previous display that $m_N$ satisfies a large deviation principle under $\langle \cdot \rangle_N^G$ with an explicit rate function $I^G$. Finally, using $G(m) = m_1 +m_2$ we obtain Theorem~\ref{t.parisi+ldp.basic}. Note that here we have used crucially that the previous display holds \emph{for all} continuous functions $G$.

\subsection{Setting} \label{ss.setting}
In this section, we define the class of models that we treat. 

For $m,n\in\N$, we denote by $\R^{m\times n}$ the space of all $m\times n$ real matrices. For any $a=(a_{ij})_{1\leq i\leq m,\, 1\leq j\leq n}\in \R^{m\times n}$, we denote its $j$-th column vector as $a_{\bullet j}=(a_{ij})_{1\leq i\leq m}$ and its $i$-th row vector as $a_{i\bullet}=(a_{ij})_{1\leq j\leq n}$. If not specified, a vector is understood to be a column vector. For a matrix or vector $a$, we denote by $a^\intercal$ its transpose.
For $a,b\in \R^{m\times n}$, we write $a\cdot b= \sum_{ij}a_{ij}b_{ij}$, $|a|=\sqrt{a\cdot a}$, and similarly for vectors. 

For $n\in \N$, let $\S^n$ be the linear space of $n\times n$ real symmetric matrices 
and also let $\S^n_+$ (resp.\ $\S^n_{++}$) be the subset consisting of positive semi-definite (resp.\ definite) matrices. For $a,b \in \S^n$, we write $a\geq b$ provided $a-b\in\S^n_+$, which gives a natural partial order on $\S^n$.

We start by defining the spin glass part of the model. Let $D\in\N$ be the dimension of a single spin. Let $P_1$ be a finite measure supported on the unit ball $\Ll\{\tau\in\R^D:\:|\tau|\leq1\Rr\}$ of $\R^D$ and we assume that the linear span of vectors in $\supp P_1$ is $\R^D$. 

For each $N\in\N$, we denote by $\sigma =(\sigma_{\bullet i})_{i=1}^N =(\sigma_{di})_{1\leq d\leq D, \, 1\leq i\leq N}\in\R^{D\times N}$ the spin configuration consisting of $N$ spins $\sigma_{\bullet i} \in \R^D$ understood as column vectors. We view $\sigma$ as a $D\times N$ matrix. We sample each spin independently from $P_1$, and thus the distribution of $\sigma$ is given by
\begin{e*}
     \d P_1^{\otimes N}(\sigma)= \otimes_{i=1}^N \d P_1( \sigma_{\bullet i}).
\end{e*}
We let $\xi:\R^{D\times D}\to \R$ be a smooth function that can be expressed as a power series. For each $N$, we consider a centered Gaussian field $(H_N(\sigma))_{\sigma\in\R^{D\times N}}$ with covariance structure given by
\begin{e} \label{e.cov}
    \E \Ll[H_N(\sigma)H_N(\sigma')\Rr] = N \xi \Ll( \tfrac{\sigma\sigma'^\intercal}{N}\Rr),\qquad\forall \sigma,\sigma'\in \R^{D\times N}.
\end{e}
The Hamiltonian $H_N : \R^{D \times N} \to \R$ is the spin glass part of the model. For convenience we will also use the function $\theta:\R^{D\times D}\to\R$ defined by
\begin{align}\label{e.theta=}
    \theta (a) = a\cdot\nabla\xi(a)-\xi(a),\quad\forall a\in\R^{D\times D}.
\end{align}

We now describe the Mattis interaction part of the model. Let $L\in\N$ and let $(\chi_i)_{i\in\N}$ be i.i.d.\ $\R^L$-valued random vectors, which is particularly independent from $H_N(\sigma)$. Let $d\in\N$ and $h:\R^D\times \R^L \to \R^d$ be a bounded\footnote{Throughout it will be enough to assume that $h$ is bounded on the support of the finite measure $P_1 \otimes \text{Law}(\chi_1)$. } measurable function. For each $i\in\N$, we view $h(\sigma_{\bullet i},\chi_i)$ as a generalized spin and define the mean magnetization
\begin{align}\label{e.m_N=}
    m_N = \frac{1}{N}\sum_{i=1}^N h(\sigma_{\bullet i},\chi_i).
\end{align}
Let $G:\R^d\to\R$ be a continuous function, and we add to the system the Curie--Weiss-type interaction $NG\Ll(m_N\Rr)$. 

At this point, we have described the spin glass part and the Mattis interaction part of our energy function. As explained in the introduction, recently a new approach has been put forward to describe the free energy of spin glass models (without Mattis interaction). In this approach, one needs to enrich the model by adding several new parameters. 

To define the enriched version of the model, we need to add external fields parameterized by increasing paths and driven by Poisson--Dirichlet cascades.  Let $\mcl Q_\infty$ be the collection of bounded increasing càdlàg paths $p:[0,1) \to \S^D_+$, where the monotonicity is understood as $p(s)-p(s')\in\S^D_+$ for $s>s'$. Throughout given $p \in \mcl Q_\infty$ we will define 
\begin{e*}
    p(1) = \lim_{s \to 1-} p(s),
\end{e*}
which is well-defined by monotonicity of $p$.

Throughout, let $\fR$ be the Ruelle probability cascade (RPC) with overlap uniformly distributed over $[0,1]$ (see \cite[Theorem~2.17]{pan}). Precisely, $\fR$ is a random probability measure on the unit sphere of a fixed separable Hilbert space (the exact form of the space is not important), with the inner product denoted by $\alpha\wedge\alpha'$. Let $\alpha$ and $\alpha'$ be independent samples from $\fR$. Then, the law of $\alpha\wedge\alpha'$ under $\E \fR^{\otimes 2}$ is the uniform distribution over $[0,1]$, where $\E$ integrates the randomness of $\fR$.
This overlap distribution uniquely determines $\fR$ (see~\cite[Theorem~2.13]{pan}).
Almost surely, the support of $\fR$ is ultrametric in the induced topology. For rigorous definitions and basic properties, we refer to \cite[Chapter 2]{pan} (also see \cite[Chapter 5]{HJbook}).
We also refer to~\cite[Section~4]{chenmourrat2023cavity} for the construction and properties of $\fR$ useful in this work.

For $q\in\mcl Q_\infty$ and almost every realization of $\fR$, let $(w^q(\alpha))_{\alpha\in\supp\fR}$ be the $\R^D$-valued centered Gaussian process with covariance
\begin{align}\label{e.Ew^qw^q=}
    \E\Ll[ w^q (\alpha)w^q(\alpha')^\intercal\Rr]  = q(\alpha\wedge\alpha').
\end{align}
Its existence and properties are given in~\cite[Section~4]{chenmourrat2023cavity}.
Conditioned on $\fR$, let $(w^q_i)_{i\in\N}$ be i.i.d.\ copies of $w^q$ and also independent of other randomness.

For $N\in \N$, $q\in  \mcl Q_\infty$, $t \geq 0$, the full energy function of the system will be chosen to be 
\begin{align} \label{e.H^t,q;s,x_N(sigma,alpha,chi)=}
\begin{split}
    H^G_{N;t,q}(\sigma,\alpha) &= N G(m_N) + \sqrt{2t} H_N(\sigma) - Nt \xi\Ll(\tfrac{\sigma\sigma^\intercal}{N}\Rr) 
    \\
    &+ \sum_{i=1}^N w^q_i(\alpha)\cdot \sigma_{\bullet i} -\tfrac{1}{2} q(1)\cdot\sigma\sigma^\intercal.   
\end{split} 
\end{align}
We are interested in the infinite-volume limit of the associated free energy, defined as
\begin{align} \label{e.F_N(t,q;s,x)=}
    F_N^G(t,q) = - \frac{1}{N} \log \iint \exp\Ll(H^G_{N;t,q}(\sigma,\alpha) \Rr) \d P_1^{\otimes N}(\sigma) \d \mfk R(\alpha).
\end{align}
We also let $\bar F_N^G(t,q)= \E F_N^G(t,q) = \E_{H_N,\,(w^q_i)_i,\,(\chi_i)_i,\,\fR} F_N^G(t,q)$. In the following, if not specified, $\E$ always averages over the displayed randomness.


As is explained in full detail in Section~\ref{ss.usual}, usual models such as \eqref{e.simple} can be obtained as special cases of the Hamiltonian $H_{N,t,q}^G$. Briefly, to do so one can set $t = \beta^2/2$ and $q = 0$ and make a wise choice for $G$ in order to absorb $- \tfrac{N \beta^2}{2}\xi\Ll(\tfrac{\sigma\sigma^\intercal}{N}\Rr)$ in the Mattis interaction.

Here, we have chosen to introduce the correction terms $Nt \xi\Ll(\sigma\sigma^\intercal/N\Rr)$ and $\frac{1}{2}q(1)\cdot\sigma\sigma^\intercal$ to simplify the forthcoming variational formulas. As explained in Section~\ref{ss.usual}, they are easily removed. Also, notice that those terms are one-half of the variance of $ \sqrt{2t}H_N(\sigma)$ and $\sum_{i=1}^Nw^q_i(\alpha)\cdot \sigma_{\bullet i}$, respectively. So they correspond to drifts in exponential martingales.

The large deviation principle we prove holds almost surely with respect to the randomness of $(H_N)_{N \geq 1}$, $(\chi_i)_{i \in \N}$, and $\mfk R$. Therefore, throughout we will assume that those random variables are defined on a common probability space $\Omega$ to have a convenient way of stating those results.

Let $x \in \R^d$. When $G(m) = m \cdot x$, at $t = 0$ we have $\bar F^G_N(0,q) =\bar F_1^G(0,q)$ (see~\cite[Proposition~3.2]{chenmourrat2023cavity}). In this case, we set $\psi(q;x)=\bar F_1^{m \mapsto m \cdot x}(0,q)$ which can be expressed as
\begin{align}
    \begin{split}
        \psi(q;x) = -\E\log\iint\exp\Ll(\sqrt{2}w^q(\alpha)\cdot \tau-q(1)\cdot\tau\tau^\intercal+x\cdot h(\tau,\chi_1)\Rr) 
    \\
    \d P_1(\tau)\d\mathfrak{R}(\alpha).
    \end{split} \label{e.psi(q;x)=}
\end{align}

\subsection{Main results}

Throughout, given two metric spaces $\mcl X$ and $\mcl Y$, we use $\|\scdot\|_{\mathrm{Lip}(\mcl X;\mcl Y)}$ to indicate the Lipschitz coefficient of functions from $\mcl X$ to $\mcl Y$. We let $\nabla \xi\lfloor_B$ be the restriction of $\nabla \xi$ to $B$ where  
\begin{align}\label{e.B=}
    B=\Ll\{a\in\R^{D\times D}:\: |a|\leq1\Rr\}.
\end{align}
We let $t_\star \in (0,+\infty)$ be the positive real number defined by 

\begin{align}\label{e.t_star=}
    t^{-1}_\star = \sup_{x'\in\R^d} \Ll\|\nabla_q \psi(\scdot;x')\Rr\|_{\mathrm{Lip}(\mcl Q_\infty;\mcl Q_\infty)}\Ll\|\nabla \xi\lfloor_{B}\Rr\|_{\mathrm{Lip}(B;\R^{D\times D})}.
\end{align}
\begin{remark}[Lower bound for $t_\star$] 
    Below, see \eqref{e.p.psi_smooth_2} in Proposition~\ref{p.psi_smooth}, we justify that for every $x' \in \R^d$, the function $\psi(\scdot;x')$ is differentiable on $\mcl Q_\infty$ and that its gradient $\nabla_q \psi(\scdot;x') : \mcl Q_\infty \to \mcl Q_\infty$ is a Lipschitz function with Lipschitz constant $\leq 16$. This yields the following lower bound 
    \begin{align}\label{e.t_star>}
        t_\star \geq \Ll(16\Ll\|\nabla \xi\lfloor_{B}\Rr\|_{\mathrm{Lip}(B;\R^{D\times D})}\Rr)^{-1} > 0.
    \end{align}
    We note that this bound is very loose and is unlikely to be optimal.
    \qed
\end{remark}
For every $x \in \R^d$, let $f=f(\cdot,\scdot;x) : \R_+ \times \mcl Q_2 \to \R$ denote the unique solution\footnote{Below in Section~\ref{s.short_time} we give a precise meaning to this partial differential equation and explain in what sense it has a unique solution.}  of
\begin{e} \label{e.hj_intro}
    \begin{cases}
        \partial_t f - \int \xi(\nabla f) = 0 \text{ on } (0,+\infty) \times \mcl Q_2 \\
        f(0,\scdot;x) = \psi(\scdot;x).
    \end{cases}
\end{e}
We refer the reader to the discussion at the beginning of Section~\ref{s.short_time} for more details on the precise meaning of \eqref{e.hj_intro} and the associated notion of viscosity solution.

    %
    %
    %
    %
    %
    %
%
\begin{theorem}[Limit free energy] \label{t.free_energy}
    Let $t < t_\star$ and $q \in \mcl Q_\infty$. For every continuous function $G : \R^d \to \R$, we have 
    \begin{e} \label{e.free_energy}
        \lim_{N \to +\infty} \bar F_N^G(t,q) = \inf_{m\in\R^d}\sup_{x\in\R^d}\Ll\{f(t,q;x)+m\cdot x-G(m)\Rr\}.
    \end{e}
Moreover, for every $x\in\R^d$, there is a unique solution $p^x\in\mcl Q_\infty$ of the fixed point equation
\begin{e} \label{e.fixed_point}
        p = \nabla_q \psi(q+t \nabla \xi(p);x) \quad\text{for $p\in \mcl Q_\infty$},
\end{e} 
and we have
\begin{e} \label{e.visco=crit_parisi}
\begin{split}
    f(t,q;x ) = \psi(q +t \nabla \xi(p^x);x) - t \int_0^1 \theta(p^x(u)) \d u \quad\text{and}\quad \nabla_qf(t,q;x) = p^x.
\end{split}
\end{e}
\end{theorem}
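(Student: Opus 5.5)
Two things must be shown: the critical-point representation \eqref{e.visco=crit_parisi} of the viscosity solution $f$ of \eqref{e.hj_intro}, and the identification \eqref{e.free_energy} of the limit free energy. I would first treat linear $G(m)=x\cdot m$ and then obtain general continuous $G$ through large deviations, following the scheme in the introduction.

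\textbf{Step 1: the fixed point.} Fix $x$ and $q$, and consider $\Phi(p)=\nabla_q\psi(q+t\nabla\xi(p);x)$. Since the paths in $\mcl Q_\infty$ relevant here take values in $B$, Proposition~\ref{p.psi_smooth} together with \eqref{e.t_star=} gives
\begin{e*}
    \|\Phi(p)-\Phi(p')\|\leq \|\nabla_q\psi\|_{\mathrm{Lip}}\, t\, \|\nabla\xi\lfloor_B\|_{\mathrm{Lip}}\, \|p-p'\| \leq \tfrac{t}{t_\star}\|p-p'\|.
\end{e*}
Monotonicity of $\xi$ on $\S^D_+$ ensures that $q+t\nabla\xi(p)\in\mcl Q_\infty$. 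Hence $\Phi$ is a contraction on a complete subset of $\mcl Q_\infty$, and Banach's theorem yields a unique $p^x$. Lipschitz dependence of $p^x$ on $(t,q)$ follows from the same contraction estimate.

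\textbf{Step 2: the critical-point formula solves the PDE.} Define
\begin{e*}
    \tilde f(t,q)=\psi(q+t\nabla\xi(p^x);x)-t\int\theta(p^x).
\end{e*}
I would differentiate along the method of characteristics. Because $p^x$ is a critical point of $p\mapsto \psi(q+t\nabla\xi(p))-t\int\theta(p)$, using $\nabla\theta(a)=\nabla^2\xi(a)a$, the envelope theorem gives $\nabla_q\tilde f=p^x$ and
\begin{e*}
    \partial_t\tilde f=\int\nabla\xi(p^x)\cdot p^x-\int\theta(p^x)=\int\xi(p^x).
\end{e*}
So $\tilde f$ is a differentiable, indeed $C^1$, solution with the correct initial data. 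One then checks, via the comparison principle of Section~\ref{s.short_time}, that smooth classical solutions coincide with the unique viscosity solution $f$. This gives \eqref{e.visco=crit_parisi}.

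\textbf{Step 3: the free energy for linear $G$.} This is the main obstacle. An upper bound on $\bar F_N$ (the easy direction) comes from a Guerra-type interpolation between $\sqrt{2t}H_N$ and the cavity field $w^{t\nabla\xi(p)}$. For non-convex $\xi$ the remainder $\xi(R)-\xi(p)-\nabla\xi(p)\cdot(R-p)$ has no sign, so I would instead prove convergence directly for $t<t_\star$. The route is to show that $\bar F_N$ satisfies the HJ equation approximately,
\begin{e*}
    \partial_t\bar F_N-\int\xi(\nabla_q\bar F_N)=\text{error},
\end{e*}
where the error is controlled by the concentration of the overlap $\sigma\sigma'^\intercal/N$ given $\alpha\wedge\alpha'$. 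I would obtain that concentration at small $t$ by a cavity or contraction argument showing $\E\langle|R-p^x(\alpha\wedge\alpha')|^2\rangle\to0$, where $p^x$ arises as the fixed point of the synchronized overlap map; the condition $t<t_\star$ is precisely what makes this map contracting. Alternatively, one can add a small perturbation to enforce overlap synchronization, which gives semi-concavity and hence $\liminf$ bounds, and then invoke stability of viscosity solutions to pass to the limit $\bar F_N\to f$. A concentration inequality then gives almost sure convergence of $F_N$.

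\textbf{Step 4: general continuous $G$.} Let
\begin{e*}
    \Lambda(x)=-\lim_{N\to\infty}(F_N^{x\cdot}-F_N^0)=f(t,q;0)-f(t,q;x).
\end{e*}
It is differentiable in $x$, by the uniqueness of $p^x$ and the envelope theorem. The Gärtner--Ellis theorem then gives a large deviation principle for $m_N$ under $\langle\cdot\rangle^0$ with rate function $I=\Lambda^*$; exponential tightness holds since $h$ is bounded. Varadhan's lemma yields
\begin{e*}
    \lim_{N\to\infty} F_N^G=f(t,q;0)-\sup_m\{G(m)-I(m)\}.
\end{e*}
Unwinding the Legendre transform gives the $\inf_m\sup_x$ expression in \eqref{e.free_energy}, and convergence of expectations follows from uniform integrability.
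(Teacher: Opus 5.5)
Your Steps 1, 2 and 4 are essentially fine. Step 1 is Lemma~\ref{l.unique crit point}. The envelope argument in Step 2, using the critical-point estimate of Lemma~\ref{l.sP(p^x)_critical} and Lipschitz dependence of $p^{t,q}$, is a legitimate shortcut for the paper's construction by inverting the characteristic map $X$. Note, though, that Section~\ref{s.short_time} contains no comparison principle; uniqueness of viscosity solutions is imported from the literature. Step 4 is exactly the G\"artner--Ellis/Varadhan argument of Lemmas~\ref{l.varphi}--\ref{l.log<e^G>}.

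The genuine gap is Step 3, which is the actual content of the theorem and which you leave as a program. Your first route needs $\E\la|R^{1,2}-p^x(\alpha^1\wedge\alpha^2)|^2\ra\to0$ under the \emph{unperturbed} enriched Gibbs measure, and nothing in the proposal produces this. The contraction factor $t/t_\star$ concerns the limiting map $p\mapsto\nabla_q\psi(q+t\nabla\xi(p);x)$, not finite-$N$ overlaps, so ``$t<t_\star$ makes this map contracting'' does not yield concentration. Your second route does not identify the limit either. The one-sided bound $\liminf_N\bar F_N\geq f$ is already known at all $t$ (remark after Theorem~\ref{t.free_energy}, for $G=0$). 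Invoking stability of viscosity solutions presupposes that subsequential limits are viscosity solutions, which for non-convex $\xi$ is precisely the open conjecture this theorem settles for $t<t_\star$. Perturbations only give synchronization on average, i.e.\ information at points where the limit is differentiable.

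The missing idea is how to turn such information at a dense set of points into a full identification. The paper proceeds as follows.
\begin{enumerate}
\item Take a subsequential limit $\bar f$ of $\bar F_N^{m\mapsto x\cdot m}$ (Arzel\`a--Ascoli via \eqref{e.F_NLip}).
\item By \cite{chenmourrat2023cavity}, $\bar f$ is Gateaux differentiable on a dense set, and at such points $\partial_t\bar f(t,q)=\int_0^1\xi(p')$ for \emph{some} solution $p'$ of \eqref{e.fixed_point}.
\item Your Step 1 uniqueness forces $p'=p^{t,q}$.
\item The uniform semi-concavity of $t\mapsto\bar F_N(t,q)$ on $[c,\infty)$ (Lemma~\ref{l.semi-concave_in_t}, from the reparametrization $t=s^2$ and Jensen's inequality), together with continuity of $(t,q)\mapsto p^{t,q}$, upgrades this to $\partial_t\bar f(t,q)=\int_0^1\xi(p^{t,q})$ at \emph{every} $(t,q)$ with $0<t<t_\star$.
\item Integrating from $\bar f(0,\scdot)=\psi(\scdot;x)$ determines $\bar f$ independently of the subsequence.
\end{enumerate}
Combined with your Step 2 this closes at once, since your $\tilde f$ has the same initial condition and the same $t$-derivative. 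So $t<t_\star$ enters by selecting the correct critical point among those permitted by the cavity identity, not by making overlaps contract at finite $N$.
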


The convergence in~\eqref{e.free_energy} also happens almost surely due to the following.

\begin{remark}[Concentration]\label{r.concentration}

    The random variable $F_N^G(t,q)$ concentrates around its expectation $ \E F_N^G(t,q)$ in the large $N$ limit. 
    Indeed, this can be deduced using the standard Gaussian concentration result (e.g.\ \cite[Theorem~1.2]{pan}) for the Gaussian randomness in $F_N^G(t,q)$ together with the Efron--Stein inequality and the boundedness of the function $h(\cdot,\cdot)$ for random variables $(\chi_i)_i$. 
\qed
\end{remark}

The fixed point equation~\eqref{e.fixed_point} can be understood as the critical point condition for the functional on the right-hand side of the first equation in~\eqref{e.visco=crit_parisi}. One can interpret the unique $p^x$ solution of \eqref{e.fixed_point} as the Parisi measure, by identifying $p^x$ with the law of $p^x(U)$ with $U$ the uniform random variable over $[0,1]$.

We also mention that representations similar to \eqref{e.visco=crit_parisi} have been obtained for the mutual information in the sparse stochastic block model (with no Mattis interaction)~\cite[Proposition~1.3]{dominguez2024critical}.

Note that for every $x \in \R^d$, $t < t_\star$ and $q \in \mcl Q_\infty$ we have
\begin{e} \label{e.def.phi}
    f(t,q;x) = \lim_{N \to +\infty} F_N^{m \mapsto x \cdot m}(t,q).
\end{e}
%

\begin{remark}[Free energy of models without Mattis interaction]
    For models without Mattis interaction ($G= 0$), it is not known (but very strongly expected) that the free energy converges in the large $N$ limit. Identifying the limiting value of the free energy is a major problem. In \cite{mourrat2020free,mourrat2020nonconvex} it was shown that for all $t \geq 0$ and $q \in \mcl Q_\infty$,
    \begin{e}
        \liminf_{N \to +\infty} F^0_N(t,q) \geq f(t,q;0),
    \end{e}
    and it was conjectured that this lower bound is tight. As a consequence of \eqref{e.def.phi} we deduce that this conjecture is at least verified for $t < t_\star$. \qed
\end{remark}


\begin{definition}[Large deviation principle]\label{d.ldp}
    Let $(X_N)_{N \geq 1}$ be a sequence of random variables in $\R^d$ with associated probability measures $(\P_N)_{N \geq 1}$ and let $I:\R^d\to [0,\infty]$. We say that the random variables $X_N$ satisfy a large deviation principle under $\P_N$ with rate function $I$ when for every Borel measurable set $A \subset \R^d$, we have  
    \begin{align*}
        -\inf_{ A^\circ} I\leq \liminf_{N\to\infty}\frac{1}{N}\log \P_N\{X_N\in A\} \leq \limsup_{N\to\infty}\frac{1}{N}\log \P_N\{X_N\in A\} \leq -\inf_{\bar A} I,
    \end{align*}
    where $A^\circ$ and $\bar A$ are the interior and the closure of $A$, respectively.
\end{definition}

The Gibbs measure associated to $H^G_{N;t,q}$ is the (random) probability measure defined on $\R^{D \times N} \times \supp(\mfk R)$ by
\begin{align}\label{e.<>_Ntqsx=}
    \la\cdot\ra_{N;t,q}^G \quad\propto\quad \exp\Ll(H^G_{N;t,q}(\sigma,\alpha)\Rr)\d P_1^{\otimes N}(\sigma)\mathfrak{R}(\alpha).
\end{align}
For brevity, we also denote by $ \la\cdot\ra_{N;t,q}^G$ its tensorization. 

Recall the definition of $f(t,q,\cdot)$ in \eqref{e.def.phi}. We let $ f^*(t,q;\cdot)$ denote the convex dual of $-f(t,q;\cdot)$, namely,
\begin{e}\label{e.f^*=}
    f^*(t,q;m)=  \sup_{x\in\R^d} \{x \cdot m + f(t,q;x)\}, \quad \forall m \in \R^d.
\end{e}
Recall that the random variables $(H^G_{N;t,q})_{N \geq 1}$ and $(\chi_i)_{i \in \N}$ are defined on a common probability space $\Omega$. For a random variable $X : \Omega \to \mfk X$, we let $X^\omega \in \mfk X$ denote the realization of the random variable $X$ on the event $\omega \in \Omega$. Recall the mean magnetization $m_N$ defined in \eqref{e.m_N=}. Our next main result is a (quenched) large deviation principle for the random variables $m_N$ under $\la\cdot\ra^{G}_{N;t,q}$.

\begin{theorem}[Large deviations of $m_N$] \label{t.ldp}
   Let $t < t_\star$ and $q \in \mcl Q_\infty$.  There is a full-measure subset $\Omega' \subset \Omega$ such that the following holds. For every $\omega \in \Omega'$ and every continuous function $G : \R^d \to \R$, the random variable $m_N$ satisfies a large deviation principle under $\langle \cdot \rangle_{N;t,q}^{G,\omega}$ with rate function $I^G_{t,q}$ defined by 
   \begin{e}\label{e.I^G=}
       I^G_{t,q}(m)= -G(m)  + f^*(t,q;m) + \sup_{m'} \left\{ G(m') - f^*(t,q;m') \right\},\, \forall \, m \in \R^d.
   \end{e}
\end{theorem}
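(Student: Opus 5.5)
The plan is to obtain Theorem~\ref{t.ldp} from Theorem~\ref{t.free_energy} through the standard large deviation chain: Gärtner--Ellis, then Varadhan, then Bryc. It runs in three steps, and the only genuinely delicate point is making the full-measure set $\Omega'$ independent of $G$.

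\textbf{Step 1: the case $G=0$.} I would take $G=0$ and work with $\langle\cdot\rangle^{0,\omega}_{N;t,q}$. For every $x\in\R^d$,
\begin{e*}
\frac1N\log\big\langle e^{N x\cdot m_N}\big\rangle^{0}_{N;t,q} = F_N^0(t,q)-F_N^{m\mapsto x\cdot m}(t,q).
\end{e*}
By Theorem~\ref{t.free_energy} with $G(m)=m\cdot x$ and $G=0$, together with the concentration in Remark~\ref{r.concentration} and Borel--Cantelli, the right-hand side converges almost surely to $\Lambda(x):=\ell_0-f(t,q;x)$, where $\ell_0=\lim \bar F^0_N(t,q)$. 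For Borel--Cantelli I need a summable concentration bound. Gaussian concentration gives this. For the $\chi$ part, the function $h$ is bounded, so bounded differences give $O(1/N)$ Lipschitz dependence on each $\chi_i$; McDiarmid then yields exponential tails.

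This first gives convergence for all $x$ in a countable dense set. Because $\Lambda_N(x)=\frac1N\log\langle e^{Nx\cdot m_N}\rangle$ is convex and $|m_N|\le\|h\|_\infty$, the maps $\Lambda_N$ are uniformly Lipschitz. Hence one full-measure set $\Omega'$ gives convergence for every $x\in\R^d$ at once.

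The limit $\Lambda$ is finite and convex. It is differentiable because $f(t,q;\cdot)$ is differentiable in $x$: differentiability in $x$ is inherited through the representation~\eqref{e.visco=crit_parisi} and the envelope theorem, or via the uniqueness of $p^x$. I expect verifying this differentiability rigorously to be the main obstacle. If it fails, I would instead argue exponential tightness (automatic, since $m_N$ is bounded) and use the full Varadhan limit in Step~2 to identify the rate function directly.

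With differentiability in hand, Gärtner--Ellis gives an LDP for $m_N$ with rate function
\begin{e*}
I(m)=\Lambda^*(m)=\sup_x\{x\cdot m+f(t,q;x)\}-\ell_0=f^*(t,q;m)-\ell_0.
\end{e*}

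\textbf{Step 2: Varadhan's lemma for general $G$.} For every $\omega\in\Omega'$ and every continuous $G$, Varadhan's lemma applies; no tail condition is needed since $m_N$ lies in a compact set. It gives
\begin{e*}
\lim_N\frac1N\log\big\langle e^{NG(m_N)}\big\rangle^{0}_{N;t,q}=\sup_m\{G(m)-I(m)\}.
\end{e*}
Since $\Omega'$ was fixed in Step~1, this holds simultaneously for all $G$, which is exactly why $\Omega'$ must not depend on $G$. Since $\ell_0=\sup_m\{-f^*(m)\}$, this also implies the value~\eqref{e.free_energy}, giving a consistency check.

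\textbf{Step 3: transfer to $\langle\cdot\rangle^G$.} For any bounded continuous $K$,
\begin{e*}
\frac1N\log\big\langle e^{NK(m_N)}\big\rangle^G_{N;t,q}=\frac1N\log\big\langle e^{N(G+K)(m_N)}\big\rangle^0_{N;t,q}-\frac1N\log\big\langle e^{NG(m_N)}\big\rangle^0_{N;t,q}.
\end{e*}
By Step~2 this converges to $\sup_m\{K(m)+G(m)-f^*(m)\}-\sup_{m'}\{G(m')-f^*(m')\}$.

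The sequence is exponentially tight by boundedness. Bryc's inverse Varadhan lemma then yields an LDP with rate function
\begin{e*}
\sup_K\{K(m)-\lim\ldots\}=-G(m)+f^*(m)+\sup_{m'}\{G-f^*\}=I^G_{t,q}(m).
\end{e*}
This identification uses that $f^*-G$ is lower semicontinuous, which holds because $f^*$ is a supremum of continuous functions. This is exactly~\eqref{e.I^G=}.
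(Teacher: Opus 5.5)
Your proposal is correct and follows the paper's proof essentially step for step: G\"artner--Ellis at $G=0$ on a $G$-independent full-measure set built from countably many tilts plus the uniform Lipschitz bound on $\Lambda_N$, then Varadhan's lemma, then Bryc's inverse Varadhan lemma; the differentiability of $f(t,q;\cdot)$ you flag is supplied in the paper by exactly the envelope-type argument you anticipate (Lemma~\ref{l.varphi}, which combines the Lipschitz dependence of $p^x$ on $x$, coming from the contraction for $t<t_\star$, with the quadratic estimate of Lemma~\ref{l.sP(p^x)_critical}). One small sign slip in your consistency check: $\ell_0=f(t,q;0)=\inf_m f^*(t,q;m)$, not $\sup_m\{-f^*(t,q;m)\}$, and it is this identity that gives $\inf I=0$.
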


For $y\in\S^D$, we define 
\begin{align}\label{e.haty=}
   \hat y(s) : [0,1) \to S^D
\end{align}
to be the constant path with value $y$. For $\kappa\in L^1$, we define
\begin{e}
  \check{\kappa}=(\kappa)^\vee =\int_0^1\kappa(r)\d r \in\S^D.  
\end{e}
When the path $q$ is chosen to be $\hat y$ and particularly $\hat 0$, we show that the representation for the free energy derived in Theorem~\ref{t.free_energy} reduces to a replica symmetric formula (Theorem~\ref{t.rs}). 
This is not surprising and corresponds to the traditional picture of replica symmetry at low $t$ as previously mentioned. 


For every $y\in \S^D_+$ and $x \in \R^d$, we set 
\begin{align}\label{e.phi(y,x)=}
    \phi(y;x)=-\E\log\int \exp\Ll(\sqrt{2y}\eta\cdot\tau-y\cdot\tau\tau^\intercal + x \cdot h(\tau,\chi_1) \Rr)\d P_1(\tau),
\end{align}
where $\eta$ is an independent standard $\R^D$-valued Gaussian vector (i.e.\ $\E\eta\eta^\intercal$ is the $D\times D$-identity matrix) and $\E$ averages over both $\eta$ and $\chi_1$. We denote the derivative in the first variable of $\phi(\scdot;x)$ as $\nabla_y \phi(\scdot;x)$.

\begin{theorem}[Replica symmetric form]\label{t.rs}
Let $x\in\R^d$. Define $g(\scdot,\scdot;x):[0,t_\star)\times \S^D_+\to\R$ by
    \begin{e}
        g(t,y;x)=f(t,\hat y;x).
    \end{e}
    The function $g(\scdot,\scdot;x)$ is differentiable on $[0,t_\star)\times \S^D_+$ and satisfies
    \begin{align}
        \begin{cases}
            \partial_t g(t,y;x) - \xi\big(\nabla_y g(t,y;x)\big)=0 &\quad \text{ for } (t,y)\in(0,t_\star)\times \S^D_+, \\
            g(0,y;x) = \phi(y;x) &\quad \text{ for } y \in \S^D_+. 
        \end{cases}
    \end{align}
    Moreover, at every $(t,y)\in (0,t_\star)\times \S^D_+$, there is a unique solution $z\in\S^D_+$ to the fixed point equation
    \begin{align}\label{e.t.fixed_pt_eqn}
        z' = \nabla_y \phi(t\nabla\xi(z');x) \quad\text{for }z'\in\S^D_+,
    \end{align}
    and we have
    \begin{e}\label{e.t.fixed_pt_eqn2}
        g(t,y;x) = \phi(y+t\nabla\xi(z);x) - t \theta(z)\quad\text{and}\quad \nabla_y g(t,y;x) = z.
    \end{e}
\end{theorem}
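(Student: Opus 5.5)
The plan is to reduce everything to Theorem~\ref{t.free_energy} by restricting to constant paths. Note that, read literally, \eqref{e.t.fixed_pt_eqn} omits $y$; since $g(t,y;x)=f(t,\hat y;x)$ and the path fixed point \eqref{e.fixed_point} at $q=\hat y$ is $p=\nabla_q\psi(\hat y+t\nabla\xi(p);x)$, I will prove the statement with fixed point equation $z'=\nabla_y\phi(y+t\nabla\xi(z');x)$, consistent with \eqref{e.t.fixed_pt_eqn2}.

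\textbf{Step 1: reduction of $\psi$ on constant paths.} For $y\in\S^D_+$, the process $w^{\hat y}(\alpha)$ has covariance $y$ for every pair $\alpha,\alpha'$. Hence it can be realized as $\sqrt{y}\,\eta$, independent of $\alpha$, and the $\fR$-integral in \eqref{e.psi(q;x)=} disappears. Comparing with \eqref{e.phi(y,x)=} then gives $\psi(\hat y;x)=\phi(y;x)$. In particular $g(0,y;x)=f(0,\hat y;x)=\phi(y;x)$.

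Next I identify the gradient at a constant path, using the formula for $\nabla_q\psi$ from Proposition~\ref{p.psi_smooth}. That gradient is the path $s\mapsto \E\langle \tau\tau'^\intercal \mathbf 1\{\alpha\wedge\alpha'\text{ at level }s\}\rangle$, suitably normalized. At $q=\hat y$ the Gibbs measure factorizes into a $\tau$-part independent of $\alpha$, so the overlap of two replicas does not depend on $\alpha\wedge\alpha'$. Therefore
\begin{e*}
\nabla_q\psi(\hat y;x)=\widehat{\E\langle\tau\rangle\langle\tau\rangle^\intercal}=\widehat{\nabla_y\phi(y;x)}.
\end{e*}
The last identity is the usual Gaussian integration by parts for $\phi$. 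I expect this identification to be the main technical point and would check it carefully against the explicit expression in Proposition~\ref{p.psi_smooth}.

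\textbf{Step 2: constant fixed points.} Let $\mathcal C\subset\mcl Q_\infty$ denote the set of constant paths; it is closed. Consider the map $T(p)=\nabla_q\psi(\hat y+t\nabla\xi(p);x)$. By Step 1 it sends $\mathcal C$ into $\mathcal C$, and for $p=\hat z$ it acts by $z\mapsto\nabla_y\phi(y+t\nabla\xi(z);x)$.

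By the definition \eqref{e.t_star=} of $t_\star$ and $t<t_\star$, $T$ is a strict contraction. This uses $|p(s)|\leq1$, so that $\nabla\xi$ is only evaluated on $B$. Banach's theorem on $\mathcal C$ therefore gives a constant fixed point $\hat z$. By the uniqueness in Theorem~\ref{t.free_energy}, $p^x=\hat z$. Conversely, every solution $z'$ of the finite-dimensional equation yields a path fixed point $\hat z'$, so $z'=z$, which proves uniqueness.

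Plugging $p^x=\hat z$ into \eqref{e.visco=crit_parisi} and using Step 1 gives
\begin{e*}
g(t,y;x)=\phi(y+t\nabla\xi(z);x)-t\theta(z).
\end{e*}

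\textbf{Step 3: derivatives.} Since $\nabla_qf(t,\hat y;x)=\hat z$, differentiating $y\mapsto f(t,\hat y;x)$ along constant directions $\hat b$ gives derivative $\int_0^1 \hat z\cdot\hat b=z\cdot b$, so $\nabla_yg=z$. This requires $f(t,\scdot;x)$ to be differentiable in $q$, which Theorem~\ref{t.free_energy} supplies.

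For the $t$-derivative and joint differentiability, I apply the implicit function theorem to $F(t,y,z)=z-\nabla_y\phi(y+t\nabla\xi(z);x)$. Here $\phi$ is smooth and $\partial_zF=I-t\nabla^2_y\phi\,\nabla^2\xi$ is invertible by the contraction bound, so $z=z(t,y)$ is $C^1$. Differentiating the critical-point formula, all terms containing $\partial_tz$ cancel, because $\nabla_y\phi=z$ and $\nabla\theta(z)=\nabla^2\xi(z)z$. What remains is
\begin{e*}
\partial_tg=z\cdot\nabla\xi(z)-\theta(z)=\xi(z)=\xi(\nabla_yg).
\end{e*}
Together with the continuity of $z$, this yields differentiability of $g$ on $[0,t_\star)\times\S^D_+$ and the Hamilton--Jacobi equation.
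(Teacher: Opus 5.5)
Your proof is correct, and you were right to restore the missing $y$: the paper's own version (Proposition~\ref{p.f-d-proj}) is $z'=\nabla_y\phi(y+t\nabla\xi(z');x)$. Your route differs from the paper's in two places.

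First, to show that $\nabla_q\psi$ sends constant paths to constant paths, the paper runs the contraction on $z\mapsto(\nabla_q\psi(\hat y+t\nabla\xi(\hat z);x))^\vee$ in $\S^D_+$. It then invokes Lemma~\ref{l.detect_jump}, a jump-detection result for the Parisi PDE that is only sketched (by adaptation from another paper) and relies on $\supp P_1$ spanning $\R^D$. Your factorization argument is more elementary and self-contained. Since $w^{\hat y}(\alpha)=\sqrt{y}\,\eta$ does not depend on $\alpha$, under $\E\la\scdot\ra_0$ the replicas $(\tau^1,\tau^2)$ are independent of $\alpha^1\wedge\alpha^2$, which is uniform on $[0,1]$. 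Then \eqref{e.nabla_qpsi(q;x)=} gives $\la\kappa,\nabla_q\psi(\hat y;x)\ra_{L^2}=\check{\kappa}\cdot\E\la\tau\ra\la\tau\ra^\intercal$. This shows that only the easy direction of Lemma~\ref{l.detect_jump}, for constant $q$, is needed here.

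Second, for $\partial_t g$ and joint differentiability, the paper simply uses the joint Fr\'echet differentiability of $f$ from Proposition~\ref{p.f=UZ} and the infinite-dimensional equation. The chain rule then gives $\partial_t g(t,y)=\partial_t f(t,\hat y;x)=\int\xi(\hat z)=\xi(z)$. Your implicit-function computation is correct, including the cancellation via $\nabla\theta(z)=\nabla^2\xi(z)z$, and it gives a genuinely finite-dimensional proof of the equation for $g$. It comes at extra cost, though:
\begin{itemize}
\item you need $\phi(\scdot;x)$ to be $C^2$ up to the boundary of $\S^D_+$, which follows from Proposition~\ref{p.psi_smooth}~\eqref{i.p.psi_smooth(4)} via $\nabla_y\phi(y;x)=(\nabla_q\psi(\hat y;x))^\vee$;
\item you need an implicit function theorem valid at boundary points of the cone. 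The open-set version does not apply directly; instead use the direct argument of Lemma~\ref{l.Z}, namely Lipschitz dependence of the fixed point plus invertibility of $\mathbf{I}-t\nabla^2_y\phi\,\nabla^2\xi$.
\end{itemize}
Since you already use the $q$-differentiability of $f$ to get $\nabla_y g=z$, the chain-rule route is the shorter way to finish.
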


As before, the fixed point equation~\eqref{e.t.fixed_pt_eqn} can be understood as the critical point condition for the right-hand side of the first equation in~\eqref{e.t.fixed_pt_eqn2}.

\begin{remark}[Gaussian external field]\label{r.Gauss_ext_field}
When $q=\hat y$, we can interpret the model as a non-enriched model with a Gaussian external field.
Recall the covariance structure of $w^q(\alpha)$ from~\eqref{e.Ew^qw^q=}. When $q=\hat y$ for some $y\in\S^D_+$, we have
\begin{align}\label{e.w^haty=sqrtyeta}
    w^{\hat y}(\alpha) \stackrel{\d}{=} \sqrt{y}\,\eta,
\end{align}
where $\eta$ is an independent standard $\R^D$-valued Gaussian vector. Therefore, inside $H^G_{N;t,q}(\sigma,\alpha)$, we have
\begin{align}
    \sum_{i=1}^N w^{\hat y}_i(\alpha)\cdot \sigma_{\bullet i} \stackrel{\d}{=} \sqrt{y}\sum_{i=1}^N \eta_i \cdot \sigma_{\bullet i},
\end{align}
where $(\eta_i)$ consists of i.i.d.\ standard $\R^D$-valued Gaussian vectors. Since this quantity no longer depends on $\alpha$, we can rewrite $H^G_{N;t,\hat y}(\sigma,\alpha)$ simply as $H^G_{N;t,y}(\sigma)$, where $y$ determines the Gaussian external field $\sqrt{y}\sum_{i=1}^N \eta_i \cdot \sigma_{\bullet i}$. In view of~\eqref{e.psi(q;x)=} and~\eqref{e.phi(y,x)=}, we can use~\eqref{e.w^haty=sqrtyeta} to see that for every $y\in \S^D_+$ and $x \in\R^d$,
\begin{align}\label{e.phi=psi}
    \phi(y;x)=\psi\Ll(\hat y;x\Rr). \qed
\end{align}
\end{remark}

\begin{remark}[Replica symmetric formula for the limit free energy]
At $q=\hat y$ for $y\in\S^D_+$ which includes the non-enriched case $q=\hat 0$, we can thus replace $f(t,q;x)$ appearing in Theorems~\ref{t.free_energy} and~\ref{t.ldp} by the replica-symmetric counterpart $g(t,y;x)$. Recall that we interpret $p^x$ appearing in Theorem~\ref{t.free_energy} as the Parisi measure. Then, in this case, $p^x$ corresponds to $\widehat{z^x}$, where $z^x$ is the unique solution of~\eqref{e.t.fixed_pt_eqn} at $x$. Hence, indeed the system is replica-symmetric. \qed
\end{remark}

When $q$ is not a constant path, then $p^x$ will not be constant. Here, the symmetry breaking is not intrinsic to system but rather caused by $q$.

\begin{remark}[Replica symmetry breaking induced by $q$]\label{r.rsb}
    Fix $x \in \R^d$. Let $(t,q) \in [0,t_\star) \times \mcl Q_2$ and assume that $q$ as a path has a jump at $s\in(0,1)$. The map $\nabla_q \psi(\scdot;x)$ preserves jumps (see Lemma~\ref{l.detect_jump}). Therefore, the unique solution $p^x \in \mcl Q_\infty$ in Theorem~\ref{t.free_energy} has also a jump at $s$. In particular, if $q$ is strictly increasing on $[0,1)$, then so is $p^x$. \qed
\end{remark}



Our main results are stated for models based on vector spin glasses. The same conclusions can be deduced for the following models as well. 

\begin{remark}[Multi-type and multi-species models] \label{r.multi-sp}
Further simplification can be made if $\xi$ only depends on the diagonal of the matrix. We call such a model multi-type. In this setting, only the diagonal in the self-overlap matrix matters, and thus the free energy can be encoded with paths taking values in $\R^D_+$ rather than $S^D_+$. A multi-type model is a special case of multi-species models. In Appendix~\ref{s.multi-sp}, we describe how our main results can be adapted to multi-species models. We note that passing back and forth between vector models and multi-species models is largely a technical matter and does not introduce significant new challenges. Essentially all the results mentioned so far on the free energy of vector spin glasses remain valid in this setting~\cite{barcon,pan.multi}. \qed 
\end{remark}

\subsection{Recovering concrete models from the general setting} \label{ss.usual}

The main results of this paper are stated for very general models as introduced in Section~\ref{ss.setting}. Let us explain how to recover results about more usual models, like \eqref{e.simple}, from those.  

Given $h:\R^D\times \R^L\to\R^d$ in~\eqref{e.F_N(t,q;s,x)=}, we define $\tilde h:\R^D\times \R^L\to \R^d\times \S^D$ through
\begin{e}
    \tilde h(\tau,\chi) = \Ll(h(\tau,\chi),\ \tau\tau^\intercal\Rr).
\end{e}
Henceforth, we isometrically identify $\R^d\times \S^D$ with $\R^{\tilde d}$ for some $\tilde d \in\N$. Given $G:\R^d\to\R$ in~\eqref{e.F_N(t,q;s,x)=}, we introduce $\tilde G:\R^d\times \S^D\to\R$ given by
\begin{e}
    \tilde G(m,r) =G(m) + \frac{\beta^2}{2}\xi(r).
\end{e}
Then, we consider $H_{N;t,q}^G$ in~\eqref{e.H^t,q;s,x_N(sigma,alpha,chi)=} with $G,h,d$, therein substituted with $\tilde G,\tilde h,\tilde d$ introduced above and with $t=\beta^2/2$ and $q=\hat 0$ (constant path taking the value $0$), we denote the resulting Hamiltonian $\tilde H_N$. Observe that we have  
\begin{align}
   \tilde H_N(\sigma) &= \beta H_N(\sigma) + N \tilde G \left( \frac{1}{N} \sum_{i = 1}^N \tilde h(\sigma_i, \chi_i) \right) - \frac{N \beta^2}{2} \xi \left( \frac{\sigma \sigma^\intercal}{N}\right) \\
    &= \beta H_N(\sigma) + N G \left( \frac{1}{N} \sum_{i  = 1}^N h(\sigma_{\bullet i}, \chi_i) \right).
\end{align}
In particular, observe that the model described in \eqref{e.simple} is recovered by choosing $D=2$, $d=2$, $L = 2$, $h(\sigma,\chi) = (\text{sgn}(\sigma_1 \chi_1),\text{sgn}(\sigma_2 \chi_2))$, $G(m) = m_1+m_2$, $\xi(a) = a_{11}a_{22}$, and $P_1$ is the uniform measure on $\{-1,1\}^2$.

\begin{remark}[About Theorem~\ref{t.parisi+ldp.basic}] 
     Observe that thanks to the specialization of the general models from Section~\ref{ss.setting} to \eqref{e.simple} given in this section, Theorem~\ref{t.parisi+ldp.basic} can (almost) be recovered from Theorem~\ref{t.free_energy}, Theorem~\ref{t.ldp} and Theorem~\ref{t.rs}. The small difference being that in those results the paths in $\mcl Q$ are $S^2_+$ valued, while in Theorem~\ref{t.parisi+ldp.basic} the expression for the limit free energy comes from constant $\R^2_+$ valued paths. This difference originates from the fact that the model described in \eqref{e.simple} is a multi-type model in the sense defined in Remark~\ref{r.multi-sp}. Indeed, $\xi(a) = a_{11}a_{22}$ does not depend on the whole input matrix but only on its diagonal elements. To recover Theorem~\ref{t.parisi+ldp.basic} as stated, one can use the version of the main results stated in Appendix~\ref{s.multi-sp} instead. The lower bound $\beta_\star \geq \frac{1}{2\sqrt{2}}$ comes from setting $\beta_\star = \sqrt{2 t_\star}$ and the lower bound \eqref{e.t_star>} for $t_\star$.\qed
\end{remark}

\subsection{Organization of the paper} 

In Section~\ref{s.psi_reg}, we establish some regularities of the function $\psi$ that are useful later on; these results are gathered in Proposition~\ref{p.psi_smooth}. 
In Section~\ref{s.short_time}, we show that the solution of \eqref{e.hj_intro} is differentiable on the time interval $[0,t_\star)$. Finally, in Section~\ref{s.proof_main_results}, we prove our main results. We start with a proof of \eqref{e.def.phi}, this is the key aspect of our argument, and it relies on the content of all the previous sections, see Corollary~\ref{c.limF_N=f}. We also crucially verify, thanks to \eqref{e.def.phi}, that $x \mapsto \lim_{N \to +\infty} F_N^{x \mapsto x \cdot m}(t,q)$ is continuously differentiable in $x$, see Lemma~\ref{l.varphi}. Once those two key results are proven, we can deduce Theorem~\ref{t.free_energy} and Theorem~\ref{t.ldp} using classical results from large deviation theory.

\subsection{Acknowledgements}

We would like to warmly thank Jean-Christophe Mourrat for many useful inputs and interesting discussions during the conception and writing of this paper. HBC acknowledges funding from the NYU Shanghai Start-Up Fund and support from the NYU–ECNU Institute of Mathematical Sciences at NYU Shanghai.

\section{Regularity of \texorpdfstring{$\psi$}{psi}} \label{s.psi_reg}

\begin{definition}[Fréchet differentiability]
    Let $(E,\,|\,\cdot\,|_E)$ be a Banach space and denote by $\langle \,\cdot\,,\, \cdot\, \rangle_E$ the canonical pairing between $E$ and $E^*$. Let $G$ be a subset of $E$ and let $q \in G$. A function $g : G \to \R$ is \textbf{Fréchet differentiable} at $q \in G$ if there exists a unique $y^* \in E^*$ such that the following holds:
    \begin{e*}
        \lim_{\substack{q' \to q \\ q' \in G}} \frac{g(q') - g(q) - \langle y^*, q'-q \rangle_E}{|q'-q|_E} = 0.
    \end{e*}
    In such a case, we call $y^*$ the \textbf{Fréchet derivative} of $g$ at $q$, and we denote it by $\nabla g(q)$ or $\nabla_q g(q)$.

    More generally, let $(E',|\scdot|_{E'})$ be another Banach space. Let $G, G'$ be subsets of $E$ and $E'$, respectively, and let $q\in G$. A function $g:G\to G'$ is \textbf{Fréchet differentiable} at $q\in G$ if there exists a unique bounded operator $A:E\to E'$ such that
    \begin{e*}
        \lim_{\substack{q' \to q \\ q' \in G}} \frac{\Ll|g(q') - g(q) - A(q'-q)\Rr|_{E'}}{|q'-q|_E} = 0.
    \end{e*}
    Again, in such a case, we denote $A$ as $\nabla g(q)$ or $\nabla_q g(q)$.
\end{definition}

We will use the operator norm on the set of bounded operators $A : E \to E'$, defined as
\begin{e} \label{e.def.opnorm}
    \| A \|_{\mathrm{Op}(E,E')} = \sup_{q \in E \setminus \{0\}} \frac{|Aq|_{E'}}{|q|_E}.
\end{e}

\begin{proposition}[Regularity of $\psi$]\label{p.psi_smooth}
The following holds, where we write $L^\sfp=L^\sfp([0,1);\S^D)$ for $\sfp\in[1,\infty]$.
\begin{enumerate}
    \item \label{i.p.psi_smooth(1)} The function $\psi$ can be extended to $\mcl Q_1\times \R^d$ and satisfies, for any $q,q'\in\mcl Q_1$ and $x,x'\in\R^d$,
\begin{align}\label{e.psi_Lip}
    \Ll|\psi(q;x) - \psi(q';x')\Rr|\leq |q-q'|_{L^1} + |h|_{L^\infty} |x-x'|.
\end{align}

\item \label{i.p.psi_smooth(2)} For each $x\in\R^d$, the restriction of the function $\psi(\scdot;x)$ to $\mcl Q_2$ is Fréchet differentiable everywhere. The derivative $\nabla_q\psi(\scdot;x)$ can be extended to $\mcl Q_1$ and satisfies, for any $q,q'\in\mcl Q_1$, $x,x'\in\R^d$, and $\sfr\in[1,+\infty]$,
\begin{gather}
    \nabla_q \psi(q;x)\in\mcl Q,\quad \Ll|\nabla_q\psi(q;x)\Rr|_{L^\infty}\leq 1,\label{e.p.psi_smooth_1}
    \\
    \Ll|\psi(q';x)-\psi(q;x)-\la q'-q,\,\nabla_q\psi(q;x)\ra_{L^2}\Rr|\leq 8|q-q'|_{L^2}^2,\label{e.frechet_bound}
    \\
    \Ll|\nabla_q \psi(q;x) - \nabla_q \psi(q';x')\Rr|_{L^\sfr}\leq 16|q-q'|_{L^r}+ 4|h|_{L^\infty}|x-x'|.\label{e.p.psi_smooth_2}
\end{gather}

\item \label{i.p.psi_smooth(3)} For each $q\in\mcl Q_1$, the function $\psi(q;\scdot)$ is differentiable everywhere on $\R^d$ and its derivative satisfies, for $q,q'\in\mcl Q_1$, $x,x'\in\R^d$,
\begin{gather}
    |\nabla_x\psi(q;x)|\leq |h|_{L^\infty} ,\label{e.smooth(3).1}
    \\
    \Ll|\psi(q;x')-\psi(q;x)-(x'-x)\cdot\nabla_x\psi(q;x)\Rr|\leq |h|_{L^\infty}^2|x-x'|^2,\label{e.smooth(3).1.5}
    \\
    \Ll|\nabla_x \psi(q;x) - \nabla_x \psi(q';x')\Rr|\leq 8|h|_{L^\infty}|q-q'|_{L^1}+ 2|h|^2_{L^\infty}|x-x'|.\label{e.smooth(3).2}
\end{gather}

\item  \label{i.p.psi_smooth(4)}
For each $x\in\R^d$, the function $\nabla_q\psi(\scdot;x) :\: \mcl Q_{2\sfs} \to L^\sfs$ is Fréchet differentiable everywhere for every $\sfs \in[1,+\infty]$ and satisfies, for $q,q'\in\mcl Q_{2\sfs}$,
\begin{align}\label{e.psi_smooth(4).0}
    \Ll| \nabla_q\psi(q';x)-\nabla_q\psi(q;x) - \nabla^2_q\psi(q;x)(q'-q)\Rr|_{L^\sfs}\leq C|q-q'|_{L^{2\sfs}}^2/2,
\end{align}
for some absolute constant $C>0$ independent of $x$ and $\sfs$, where $\nabla_q^2\psi(q;x)$ is viewed as a bounded linear operator from $L^{2\sfs}$ to $L^\sfs$ (but actually enjoys better regularity as in~\eqref{e.psi_smooth(4).1}).
The derivative $\nabla^2_q\psi(\scdot;x)$ can be extended to $\mcl Q_1$ and satisfies, for every $q,q'\in\mcl Q_1$ and every $\sfp,\sfq\in[1,+\infty]$ satisfying $\frac{1}{\sfp}+\frac{1}{\sfq}\leq 1$,
\begin{gather}
    \Ll\|\nabla^2_q\psi(q;x)\Rr\|_{\mathrm{Op}(L^\sfp;L^\sfp)} \leq 16, \label{e.psi_smooth(4).1}
    \\
    \Ll\|\nabla^2_q\psi(q;x)-\nabla^2_q\psi(q';x)\Rr\|_{\mathrm{Op}(L^\sfp; L^\frac{\sfp\sfq}{\sfp+\sfq})} \leq C\Ll|q-q'\Rr|_{L^\sfq},\label{e.psi_smooth(4).2}
\end{gather}
for the same constant $C$ in~\eqref{e.psi_smooth(4).0}.
\end{enumerate}
\end{proposition}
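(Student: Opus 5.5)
The plan is to reduce everything to the cavity/Gibbs representation of $\psi$ and its derivatives, as in the convex companion paper and in \cite{chenmourrat2023cavity}. For a piecewise-constant path $q$ (finitely many jumps), the Ruelle cascade can be integrated out explicitly. $\psi(q;x)$ then becomes a finite-step Parisi-type recursion, i.e.\ an iterated $\log$--$\E$--$\exp$ of one-site integrals against $P_1$, tilted by the factor $e^{x\cdot h(\tau,\chi_1)}$. All estimates will first be proved for such $q$ with constants independent of the number of jumps. They then extend by density of piecewise-constant paths in $\mcl Q_1$ (in $L^1$) once the Lipschitz bounds are in hand. Throughout, $x$ enters only through the bounded tilt $e^{x\cdot h}$, and this tilt changes nothing structural: the Gibbs measure $\la\cdot\ra$ with density $\propto \exp(\sqrt2 w^q(\alpha)\cdot\tau - q(1)\cdot\tau\tau^\intercal + x\cdot h)$ is still a probability measure on $\supp P_1\times\supp\fR$ with $|\tau|\le1$.

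\textbf{Step 1 (first derivatives).} For $\nabla_x$ we differentiate directly: $\nabla_x\psi = -\E\la h(\tau,\chi_1)\ra$, which gives \eqref{e.smooth(3).1}. The second $x$-derivative is a covariance of $h$, bounded by $2|h|^2_{L^\infty}$ in operator norm, and this yields \eqref{e.smooth(3).1.5} and the $x$-part of \eqref{e.smooth(3).2}. For $\nabla_q$ we use Gaussian integration by parts along the path: the directional derivative of $\psi$ in a direction $\kappa$ equals $\E\la \kappa(\alpha\wedge\alpha')\cdot \tau\tau'^\intercal\ra$, up to sign. This identifies
\[
\nabla_q\psi(q;x)(s)=\E\la \tau\tau'^\intercal \,\big|\, \alpha\wedge\alpha'=s\ra
\]
in the sense of the disintegration over the uniform overlap. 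By ultrametricity/synchronization this quantity is increasing in $s$ and positive semidefinite, and it is bounded by $1$ since $|\tau|\le1$. This gives \eqref{e.p.psi_smooth_1}. The $L^1$-Lipschitz bound \eqref{e.psi_Lip} follows from $|\nabla_q\psi|_{L^\infty}\le1$ by interpolating linearly between $q$ and $q'$ (on $\mcl Q$ convex combinations stay in $\mcl Q$), and similarly in $x$.

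\textbf{Step 2 (second derivatives in $q$).} Differentiating once more along $\kappa$ yields $\nabla^2_q\psi(q;x)\kappa$ as a combination of four-replica Gibbs correlations of the form $\E\la \kappa(\alpha^a\wedge\alpha^b)\cdot\tau^a\tau^{b\intercal}\,\tau^c\tau^{d\intercal}\ra$, conditioned on the overlap value $s$ of another pair. The key is the Ruelle cascade invariance (Ghirlanda--Guerra / Bolthausen--Sznitman): all joint laws of the overlaps among the replicas are explicit functions of uniform variables. Hence each term is an average of $\kappa$ over $[0,s]$, $[s,1]$ or $\{s\}$, with weights bounded by products of $|\tau|\le1$. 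Such averaging operators are contractions on every $L^\sfp$, and counting at most $16$ terms gives \eqref{e.psi_smooth(4).1}. Then \eqref{e.frechet_bound} (constant $8=16/2$) and \eqref{e.p.psi_smooth_2} follow by Taylor expansion along the segment. The $x$-dependence in \eqref{e.p.psi_smooth_2} comes from the mixed derivative, a covariance of $h$ with $\tau\tau'^\intercal$, bounded by $4|h|_{L^\infty}$. The $q$-dependence of $\nabla_x\psi$ in \eqref{e.smooth(3).2} is obtained symmetrically, using the $L^1$ pairing.

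\textbf{Step 3 (Lipschitz Hessian).} A third derivative has the same structure with six replicas and is multilinear in two directions $\kappa,\kappa'$. Each term evaluates $\kappa$ and $\kappa'$ at overlaps of distinct pairs, and averaging gives $|\cdot|_{L^{\sfp\sfq/(\sfp+\sfq)}}\le C|\kappa|_{L^\sfp}|\kappa'|_{L^\sfq}$ by Hölder's inequality. This yields \eqref{e.psi_smooth(4).2}, and \eqref{e.psi_smooth(4).0} follows with $\sfp=\sfq=2\sfs$. The main obstacle is making the third-order formula rigorous for paths with infinitely many jumps and verifying that every term really is an averaging operator whose norm is controlled uniformly in the number of jumps. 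I would first prove this for finite-step $q$ via the explicit recursion (derivatives of $\log\E\exp$ iterates), check the structure there, and then pass to the limit using \eqref{e.psi_Lip}-type continuity.
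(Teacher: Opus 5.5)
Your outline is correct in substance, and its core is the paper's argument. The $q$- and $x$-derivatives of $\psi$ come from Gaussian integration by parts as replica correlations, e.g.\ $\E\langle R^{1,2}_\kappa\rangle$ and $2\E\langle R^{1,2}_\kappa(R^{1,2}_\eta-4R^{1,3}_\eta+3R^{3,4}_\eta)\rangle$, whence $16=2(1+4+3)$. These are bounded using only that each overlap $\alpha^\ell\wedge\alpha^{\ell'}$ is uniform on $[0,1]$ under $\E\langle\cdot\rangle$. Everything is then integrated along segments and extended to $\mcl Q_1$ by $L^1$-continuity.

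\textbf{The finite-step detour.} The paper never passes through finite-step paths. It uses that $w^{q+\lambda(q'-q)}$ has the law of $\sqrt{1-\lambda}\,w^q+\sqrt{\lambda}\,w^{q'}$ with $w^q,w^{q'}$ independent, so the interpolation and integration by parts hold directly for all $q,q'\in\mcl Q_\infty$. The resulting bounds see nothing but the uniform overlap marginals, so uniformity in the number of jumps never matters. The ``main obstacle'' you flag in Step~3 therefore does not arise. You also avoid checking that the extension from step paths agrees with the cascade definition of $\psi$ on the rest of $\mcl Q_\infty$.

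What your detour does buy is a self-contained proof of $\nabla_q\psi(q;x)\in\mcl Q$, which the paper simply cites from \cite[Corollary~5.2]{chenmourrat2023cavity}. For step paths, the conditional Gibbs averages of $\tau$ given the first $k$ levels of the field form a martingale $(M_k)$ under the measure reweighted by the Parisi recursion. The quantity $\E\langle\tau\tau'^\intercal\,|\,\alpha\wedge\alpha' \text{ at level } k\rangle$ equals $\E[M_kM_k^\intercal]$, which is positive semidefinite and increasing in $k$ by orthogonality of increments. The outer average over $\chi_1$ preserves this. That martingale structure, rather than ``synchronization'', is the mechanism you should invoke.

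\textbf{The contraction step must be repaired.} Suppose the terms of $\nabla_q^2\psi$ were literally normalized averages of $\kappa$ over $[0,s]$ or $[s,1]$. Then they would be Hardy-type operators, which are not contractions on $L^\sfp$ for $\sfp<\infty$ and are unbounded on $L^1$. Yet $\sfp=1$ is exactly what you need in \eqref{e.psi_smooth(4).1}, in \eqref{e.p.psi_smooth_2} with $\sfr=1$, and hence for the extension to $\mcl Q_1$.

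The correct structure is $\kappa\mapsto\E[\kappa(\alpha^a\wedge\alpha^b)W\,|\,\alpha^1\wedge\alpha^2=s]$ with $|W|\le 1$ and both overlaps marginally uniform. For instance, for the cascade, $\alpha^1\wedge\alpha^3$ given $\alpha^1\wedge\alpha^2=s$ has law $\tfrac12\delta_s+\tfrac12\,\mathrm{Unif}[0,1]$, not an average over $[0,s]$. Such kernels are contractions on every $L^\sfp$ by conditional Jensen. The paper states this dually as H\"older's inequality for the measure $\E\langle\cdot\rangle$, namely $\E\langle|R^{\ell,\ell'}_\kappa R^{\ell'',\ell'''}_\eta|\rangle\le|\kappa|_{L^{\sfp^*}}|\eta|_{L^\sfp}$. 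Its three-factor version gives \eqref{e.psi_smooth(4).2}, and no finer knowledge of the joint overlap law is needed.
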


One useful case of~\eqref{e.psi_smooth(4).2} is when $\sfp=\sfq=2\sfs$ for some $\sfs\in[1,+\infty]$ and thus $\frac{\sfp\sfq}{\sfp+\sfq}=\sfs$.

\begin{proof}[Proof of Proposition~\ref{p.psi_smooth}]
The argument is based on computing directional derivatives in $q$ and $x$. The computation is straightforward for the latter, and we explain the former. Fix any $q,q'\in\mcl Q_\sfs$ and let $\lambda\in[0,1]$. We want to compute the derivative in $\lambda$ along $\lambda\mapsto q+\lambda(q'-q)$. The key observation is that $w^{q+\lambda(q'-q)}(\alpha)$ inside $\psi(q+\lambda(q'-q))$ (see~\eqref{e.psi(q;x)=}) has the same law as $\sqrt{1-\lambda}w^q(\alpha) + \sqrt{\lambda}w^{q'}(\alpha)$ where $w^q(\alpha)$ and $w^{q'}(\alpha)$ are independent. Hence, we can carry out the computation in terms of $\sqrt{1-\lambda}w^q(\alpha) + \sqrt{\lambda}w^{q'}(\alpha)$. The recipe is as follows: first, we differentiate a quantity in $\lambda$ to get an expression involving $w^q(\alpha)$ and $w^{q'}(\alpha)$, and then we apply the Gaussian integration by parts~\cite[Theorem~4.6]{HJbook} to them to get the final expression.
Since these procedures are standard, we omit the intermediate steps and refer to the computations in the proof of~\cite[Proposition~5.1]{chenmourrat2023cavity} as an example. 

Throughout the proof, we use the following notation. For $\kappa\in L^\infty([0,1];\S^D)$ and $\ell,\ell'\in\N$, we write
\begin{align*}
    R^{\ell,\ell'}_\kappa = \kappa\Ll(\alpha^\ell\wedge\alpha^{\ell'}\Rr)\cdot \tau^\ell(\tau^{\ell'})^\intercal.
\end{align*}
where $(\tau^\ell)_{\ell\in\N}$ are independent samples from some Gibbs measure that will be clear from the context.
For each $a\in \R^d$ and $\ell\in\N$, we write
\begin{align*}
    h^\ell = h\Ll(\tau^\ell,\chi_1\Rr) \qquad \text{and}\qquad h^\ell_a = a\cdot h\Ll(\tau^\ell,\chi_1\Rr).
\end{align*}
In the following, let $\lambda\in[0,1]$. Recall the definition of the Gibbs measure $\langle \cdot \rangle_{N;t,q}^G$ in \eqref{e.<>_Ntqsx=}. For $q,q'\in \mcl Q_\infty$ and $x,x'\in\R^d$, we write
\begin{align*}
    \la\scdot\ra_\lambda =\la\scdot\ra_{1;0,q+\lambda(q'-q)}^{m \mapsto m \cdot(x+\lambda(x'-x))},
\end{align*}
where it is also possible that we take $q=q'$ or $x=x'$, which will be clear from the context. 

Since $\alpha^\ell\wedge\alpha^{\ell'}$ distributes uniformly over $[0,1]$ under $\E\la\scdot\ra_\lambda$ for $\ell\neq\ell'$ and $\Ll|\tau\tau^\intercal\Rr|\leq1$ due to the support of $\tau$ on the unit ball in $\R^D$, we often use the the following estimates:
\begin{align}\label{e.HolderE<>}
    \E\la\Ll| R_{\kappa}^{\ell,\ell'}\Rr|\ra_\lambda \leq |\kappa|_{L^1}\qquad \text{and}\qquad  \E\la\Ll| R_{\kappa}^{\ell,\ell'}R_{\eta}^{\ell'',\ell'''}\Rr|\ra_\lambda \leq |\kappa|_{L^\sfp}|\eta|_{L^\sfq},
\end{align}
where $\ell\neq \ell'$, $\ell''\neq\ell'''$, $\kappa,\eta\in L^\infty([0,1];\S^D)$, and $\frac{1}{\sfp}+\frac{1}{\sfq}=1$. The last inequality follows from the H\"older's inequality applied to the measure $\E\la\scdot\ra_\lambda$. Later, we will also use the generalization of the last one to three components.

We also mention that the correction term $-q(1)\cdot\tau\tau^\intercal$ in~\eqref{e.psi(q;x)=} becomes $-(q+\lambda(q'-q))(1)\cdot\tau\tau^\intercal$ when we compute the direction derivative. The presence of this term will cancel all $R^{\ell,\ell}_\kappa$ and thus in the final expression we only have terms $R^{\ell,\ell'}_\kappa$ with $\ell\neq\ell'$. This is an important reason for the usefulness of the correction.

Lastly, slightly abusing the notation, we denote by $\la\scdot,\,\scdot\ra_{L^2}$ not only the inner product in $L^2([0,1];\S^D)$ but also the natural paring in $L^\sfp([0,1];\S^D)$ and its dual.
We also use the shorthand $L^\sfp=L^\sfp([0,1];\S^D)$. The only exception is that $L^\infty$ in $|h|_{L^\infty}$ means $L^\infty(\R^D\times \R^L;\R^d)$.

With the above setup, we are ready to prove the lemma part by part.

\textit{Part~\eqref{i.p.psi_smooth(1)}.}
Recall that $\psi$ is initially defined on $\mcl Q_\infty\times \R^d$ as in~\eqref{e.psi(q;x)=}.
For $q,q'\in \mcl Q_\infty$ and $x,x'\in\R^d$, we can compute
\begin{align}\label{e.d/dlambdapsi()=E<R>-E<h>}
    \frac{\d}{\d\lambda}\psi\Ll(q+\lambda(q'-q);x+\lambda(x'-x)\Rr) = \E \la R^{1,2}_{q'-q}\ra_\lambda - \E \la h^1_{x'-x} \ra_\lambda.
\end{align}
Using~\eqref{e.HolderE<>}, the last term is bounded by $|q'-q|_{L^1}+ |h|_{L^\infty}$. Therefore, we get that~\eqref{e.psi_Lip} holds for $q,q'\in \mcl Q_\infty$ and $x,x'\in\R^d$. By this continuity, we can extend the definition of $\psi$ to $\mcl Q_1\times \R^d$ and accordingly extend~\eqref{e.psi_Lip} as desired.

\textit{Part~\eqref{i.p.psi_smooth(2)}.}
For each $q\in\mcl Q_\infty$ and $x\in\R^d$, we define $\nabla_q\psi(q;x)\in L^2$ as the element determined through
\begin{align}\label{e.nabla_qpsi(q;x)=}
    \la \kappa,\, \nabla_q\psi(q;x)\ra_{L^2} = \E \la R^{1,2}_\kappa\ra_0,\quad\forall \kappa \in L^\infty.
\end{align}
Indeed, setting $x=x'$ in~\eqref{e.d/dlambdapsi()=E<R>-E<h>}, we get
\begin{align}\label{e.d/dlambdapsi=<>}
    \frac{\d}{\d\lambda}\psi(q+\lambda(q'-q);x)= \la q'-q, \nabla_q\psi(q;x)\ra_{L^2},\qquad\forall q,q'\in \mcl Q_\infty,\, x\in\R^d.
\end{align}
This suggests that $\nabla_q\psi(q;x)$ is the correct derivative, which we will justify later after gathering enough properties.

Since $\la R^{1,2}_{\kappa}\ra_\lambda \leq |\kappa|_{L^1}$ due to~\eqref{e.HolderE<>} and the density of $L^\infty$ in $L^2$, we get from~\eqref{e.nabla_qpsi(q;x)=} that
\begin{align}\label{e.|nablapsi|<1}
    \Ll|\nabla_q\psi(q;x)\Rr|_{L^\infty} \leq 1,\qquad\forall q \in\mcl Q_\infty,\, x\in\R^d.
\end{align}
For $\kappa\in L^\infty$, $q,q'\in\mcl Q_\infty$, and $x\in\R^d$, we can compute
\begin{align}\label{e.d/dlambda<k,nablapsiq_lambda>=}
    \frac{\d}{\d\lambda}\la \kappa,\, \nabla_q\psi(q+\lambda(q'-q);x)\ra_{L^2}=2\E\la R^{1,2}_\kappa\Ll(R^{1,2}_{q'-q}-4R^{1,3}_{q'-q}+3R^{3,4}_{q'-q}\Rr)\ra_{\lambda}.
\end{align}
By~\eqref{e.HolderE<>}, the right-hand side is bounded by $16|\kappa|_{L^{\sfr^*}}|q-q'|_{L^\sfr}$ with $\frac{1}{\sfr^*}+\frac{1}{\sfr}=1$. 
Integrating in $\lambda$ and using the density of $L^\infty$ in $L^{\sfr^*}$, we get
\begin{align}\label{e.|nablapsi-nablapsi|<16|q-q'|}
    \Ll|\nabla_q\psi(q;x) - \nabla_q\psi(q';x)\Rr|_{L^\sfr}\leq 16|q-q'|_{L^\sfr},\qquad\forall q,q'\in\mcl Q_\infty,\, x\in\R^d.
\end{align}
Setting $\sfr=1$ and using~\eqref{e.|nablapsi|<1}, we extend $\nabla\psi(q,x)$ by continuity to $q\in\mcl Q_1$ and achieve the $L^\infty$-bound in~\eqref{e.p.psi_smooth_1}. First property in~\eqref{e.p.psi_smooth_1} is proved in~\cite[Corollary~5.2]{chenmourrat2023cavity}.

Setting $\kappa=q'-q$ in~\eqref{e.d/dlambda<k,nablapsiq_lambda>=} and $\sfr=2$ in the bound mentioned below~\eqref{e.d/dlambda<k,nablapsiq_lambda>=} and comparing with~\eqref{e.d/dlambdapsi=<>}, we get $\Ll|\frac{\d^2}{\d\lambda^2}\psi(q+\lambda(q'-q);x)\Rr|\leq 16|q-q'|_{L^2}$. This, along with~\eqref{e.d/dlambdapsi=<>} and Taylor's expansion, gives that for any fixed $x\in\R^d$,
\begin{align}\label{e.|...|<8|q-q'|^2}
    \Ll|\psi(q';x)-\psi(q;x)-\la q'-q,\,\nabla_q\psi(q;x)\ra_{L^2}\Rr|\leq 8|q-q'|^2_{L^2},\quad \forall q,q'\in\mcl Q_\infty.
\end{align}
Since we have shown that $\psi(\scdot;x)$ and $\nabla_q\psi(\scdot;x)$ can be extended continuously to $\mcl Q_1$ and that the latter is also bounded, we can now extend~\eqref{e.|...|<8|q-q'|^2} to $q,q'\in\mcl Q_2$. This proves that $\psi(\scdot;x)$ is Fréchet differentiable everywhere on $\mcl Q_2$ satisfying~\eqref{e.frechet_bound} and that $\nabla_q\psi(q;x)$ defined via~\eqref{e.nabla_qpsi(q;x)=} is its derivative.

To complete this part, it remains to show~\eqref{e.p.psi_smooth_2}. 
For $\kappa\in L^\infty$, $q\in\mcl Q_\infty$ and $x,x'\in\R^d$, we compute from~\eqref{e.nabla_qpsi(q;x)=} that
\begin{align*}
    \frac{\d}{\d\lambda}\la \kappa,\, \nabla_q \psi(q; x+\lambda(x'-x))\ra_{L^2}= \E \la R^{1,2}_\kappa\Ll(h^1_{x'-x}+ h^1_{x'-x}- 2h^1_{x'-x}\Rr) \ra_\lambda.
\end{align*}
Using~\eqref{e.HolderE<>} to bound the right-hand side by $4|h|_{L^\infty}|\kappa|_{L^1}|x-x'|$ and using the density of $L^\infty$ in $L^1$, we get
\begin{align*}
    \Ll|\nabla_q \psi(q;x)-\nabla_q \psi(q;x')\Rr|\leq 4|\kappa|_{L^1}|x-x'|,\qquad\forall q\in\mcl Q_\infty,\  x,x'\in\R^d.
\end{align*}
As before, we can extend this relation to $q\in\mcl Q_1$, which along with~\eqref{e.|nablapsi-nablapsi|<16|q-q'|} extended to $\mcl Q_1$ yields~\eqref{e.p.psi_smooth_2}.

\textit{Part~\eqref{i.p.psi_smooth(3)}.}
We can easily compute
\begin{align*}
    \nabla_x\psi(q;x) = - \E \la h^1\ra_0,\quad \Ll|\nabla_x\psi(q;x)\Rr|\leq |h|_{L^\infty},\qquad\forall q\in\mcl Q_\infty,\, x\in\R^d.
\end{align*}
For $q,q'\in \mcl Q_\infty$ and $x\in\R^d$, we can then compute
\begin{align*}
    \frac{\d}{\d\lambda}\nabla_x \psi(q+\lambda(q'-q);x) = 4 \E \la h^1\Ll(R^{1,2}_{q'-q}-R^{2,3}_{q'-q}\Rr) \ra_\lambda.
\end{align*}
Using~\eqref{e.HolderE<>} to bound the right-hand side, we get
\begin{align}\label{e.|nabla_xpsi-nabla_xpsi|<8|h||q-q'|}
    \Ll|\nabla_x \psi(q;x)-\nabla_x \psi(q';x)\Rr|\leq 8|h|_{L^\infty}|q-q'|_{L^1} \qquad \forall q\in\mcl Q_\infty,\, x\in\R^d.
\end{align}
Therefore, we can extend $\nabla_x\psi(q;x)$ by continuity to $q\in\mcl Q_1$ and get the bound in~\eqref{e.smooth(3).1}. Notice that we have yet to verify that the extended $\nabla_x\psi(q;x)$ is indeed the derivative at $q\in\mcl Q_1$.

To do so, we start by computing, for $q\in\mcl Q_\infty$ and $x,x'\in\R^d$,
\begin{align}\label{e.d/dlambdanabla_xpsi=-E}
    \frac{\d}{\d\lambda}\nabla_x\psi(q;x+\lambda(x'-x)) = - \E \la h^1\Ll(h^1_{x'-x}-h^2_{x'-x}\Rr) \ra_\lambda,
\end{align}
which implies
\begin{align*}
    \Ll|\nabla_x\psi(q;x)-\nabla_x\psi(q;x')\Rr| \leq 2|h|_{L^\infty}^2|x-x'|,\quad\forall q\in\mcl Q_\infty,\  x,x'\in\R^d.
\end{align*}
Again, \eqref{e.|nabla_xpsi-nabla_xpsi|<8|h||q-q'|} allows us to extend this bound to $q\in \mcl Q_1$, which along with the extension of~\eqref{e.|nabla_xpsi-nabla_xpsi|<8|h||q-q'|} to $\mcl Q_1$ yields~\eqref{e.smooth(3).2}.

Multiply both sides in~\eqref{e.d/dlambdanabla_xpsi=-E} by $x'-x$, we can get $\Ll|\frac{\d}{\d\lambda}\psi(q;x+\lambda(x'-x))\Rr|\leq 2|h|_{L^\infty}^2|x-x'|^2$, which by Taylor's expansion gives that, uniformly in $x,x'\in\R^d$,
\begin{align*}
    \Ll|\psi(q;x')-\psi(q;x)-(x'-x)\cdot\nabla_x\psi(q;x)\Rr|\leq |h|_{L^\infty}^2|x-x'|^2,\quad \forall q \in\mcl Q_\infty.
\end{align*}
As before, we can extend this by continuity to $q\in\mcl Q_1$. This verifies that $\nabla\psi(q;\scdot)$ is differentiable at $q\in\mcl Q_1$, \eqref{e.smooth(3).1.5}, and $\nabla_x\psi(q;x)$ is indeed the derivative.

\textit{Part~\eqref{i.p.psi_smooth(4)}.}
We fix any $x\in\R^d$ and write $\psi=\psi(\scdot;x)$.
For any $\sfp\in[0,+\infty]$ and $q\in\mcl Q_\infty$, we define $\nabla^2_q\psi(q)$ as a linear operator from $L^\sfp$ to $L^\sfp$ by setting
\begin{align}\label{e.nabla^2psi=}
    \la \kappa,\, \nabla^2_q \psi(q)\eta\ra_{L^2} = 2\E\la R^{1,2}_\kappa\Ll(R^{1,2}_\eta-4R^{1,3}_\eta+3R^{3,4}_\eta\Rr)\ra_0,\quad\forall\kappa,\eta\in L^\infty.
\end{align}
Notice that this definition can be made simultaneously for all $\sfp$. Using~\eqref{e.HolderE<>}, we can bound the right-hand side by $16|\kappa|_{L^{\sfp^*}}|\eta|_{L^\sfp}$ with $\frac{1}{\sfp^*}+\frac{1}{\sfp}=1$, which implies
\begin{align}\label{e.|nabla^2psi|<16}
    \Ll\|\nabla^2_q\psi(q)\Rr\|_{\mathrm{Op}(L^\sfp;L^\sfp)} \leq 16,\qquad\forall q \in \mcl Q_\infty.
\end{align}
Comparing with~\eqref{e.d/dlambda<k,nablapsiq_lambda>=}, we get, for $q,q'\in\mcl Q_\infty$ and $\kappa \in L^\infty$,
\begin{align}\label{e.part(4)_1}
    \frac{\d}{\d\lambda}\la \kappa,\, \nabla_q\psi(q+\lambda(q'-q))\ra_{L^2}=  \la \kappa,\, \nabla^2_q \psi(q+\lambda(q'-q))(q'-q)\ra_{L^2}.
\end{align}
We will extend the definition of $\nabla^2_q\psi$ to $\mcl Q_1$ and verify that it is indeed a Fréchet derivative. But first, we need to derive some estimates.
For $\kappa,\eta\in L^\infty$ and $q,q'\in\mcl Q_\infty$, we can compute from~\eqref{e.nabla^2psi=} that
\begin{align}\label{e.part(4).2}
    \frac{\d}{\d\lambda}\la \kappa,\, \nabla^2_q\psi(q+\lambda(q'-q))\eta\ra_{L^2} = \E\la R^{1,2}_\kappa\Ll(R^{1,2}_\eta-4R^{1,3}_\eta+3R^{3,4}_\eta\Rr)\mcl P\Ll(R_{q'-q}\Rr)\ra_\lambda,
\end{align}
where $\mcl P$ is a homogeneous polynomial of degree one and depends on finitely many entries in $R_{q'-q} = \big(R^{\ell,\ell'}_{q'-q}\big)_{\ell\neq \ell'}$. Using an extension of the estimate in~\eqref{e.HolderE<>}, we can bound the right-hand side in~\eqref{e.part(4).2} by $C|\kappa|_{L^\sfr}|\eta|_{L^\sfp}|q-q'|_{L^\sfq}$ for some absolute constant $C>0$, where $\frac{1}{\sfr}+\frac{1}{\sfp}+\frac{1}{\sfq}=1$. Notice that the conjugate of $\sfr$ is $\sfr^*= \frac{\sfp\sfq}{\sfp+\sfq}$. From this, we can get
\begin{align}\label{e.|nabla^2psi-nabla^2psi|_Op<|q-q'|}
    \Ll\|\nabla^2_q\psi(q)-\nabla^2_q\psi(q')\Rr\|_{\mathrm{Op}(L^\sfp; L^\frac{\sfp\sfq}{\sfp+\sfq})}\leq C\Ll|q-q'\Rr|_{L^\sfq},\quad\forall q,q'\in\mcl Q_\infty.
\end{align}
In this relation, setting $\sfr=\sfp=\infty$ and $\sfq=1$, we can thus extend by continuity the definition of $\nabla^2_q\psi(q)$ to every $q\in\mcl Q_1$ as a bounded linear operator from $L^\infty$ to $L^1$.
Then, we want to accordingly extend~\eqref{e.|nabla^2psi|<16}. Let $q\in \mcl Q_1$ and let $(q_n)_{n\in\N}$ be a sequence in $\mcl Q_\infty$ convergent to $q$ in $L^1$. Then, for any $\kappa,\eta\in L^\infty$, we have
\begin{align*}
    \la \kappa,\, \nabla^2_q\psi(q)\eta\ra_{L^2} \leq \la \kappa,\, \nabla^2_q\psi(q_n)\eta\ra_{L^2} + \la \kappa,\, \Ll(\nabla^2_q\psi(q)-\nabla^2_q\psi(q_n)\Rr)\eta\ra_{L^2}.
\end{align*}
By~\eqref{e.|nabla^2psi-nabla^2psi|_Op<|q-q'|} with $\sfr=\sfp=\infty$ and $\sfq=1$, the last term vanishes as $n\to\infty$. Taking this limit and using~\eqref{e.|nabla^2psi|<16} for $q_n$, we can obtain from the above display that~\eqref{e.|nabla^2psi|<16} holds for $q\in\mcl Q_1$, which gives~\eqref{e.psi_smooth(4).1} as desired.
Therefore, for every $q\in\mcl Q_1$, $\nabla^2_q\psi(q)$ is also a bounded operator from $L^\sfp\to L^\sfp$. Due to $\sfp\geq \frac{\sfp\sfq}{\sfp+\sfq}$, it is also an operator from $L^\sfp\to L^\frac{\sfp\sfq}{\sfp+\sfq}$. Therefore, we can extend~\eqref{e.|nabla^2psi-nabla^2psi|_Op<|q-q'|} by continuity to $q,q'\in \mcl Q_1$, which is exactly~\eqref{e.psi_smooth(4).2}.

Lastly, we verify that $\nabla^2_q\psi(q)$ indeed defines a Fréchet derivative.
Setting $\eta=q'-q$ in~\eqref{e.part(4).2}, comparing it with~\eqref{e.part(4)_1}, and using the estimate mentioned below~\eqref{e.part(4).2} with $\sfr=\frac{\sfs}{\sfs-1}$ and $\sfp=\sfq=2\sfs$, we get, for $\kappa\in L^\infty$ and $q,q'\in\mcl Q_\infty$,
\begin{align*}
    \Ll|\frac{\d^2}{\d\lambda^2}\la \kappa,\, \nabla_q\psi(q+\lambda(q'-q))\ra_{L^2}\Rr|\leq C|\kappa|_{L^\frac{\sfs}{\sfs-1}} |q-q'|_{L^{2\sfs}}^2.
\end{align*}
Using this, Taylor's expansion, and~\eqref{e.part(4)_1}, we get
\begin{align*}
    \Ll|\la\kappa,\, \nabla_q\psi(q')-\nabla_q\psi(q) - \nabla^2_q\psi(q)(q'-q)\ra_{L^2}\Rr|\leq C|\kappa|_{L^\frac{\sfs}{\sfs-1}} |q-q'|_{L^{2\sfs}}^2/2.
\end{align*}
and thus
\begin{align*}
    \Ll| \nabla_q\psi(q')-\nabla_q\psi(q) - \nabla^2_q\psi(q)(q'-q)\Rr|_{L^\sfs}\leq C|q-q'|_{L^{2\sfs}}^2/2,\quad\forall q,q'\in\mcl Q_\infty.
\end{align*}
We can extend this to $q,q'\in\mcl Q_{2\sfs}$ by using~\eqref{e.p.psi_smooth_2} (with $\sfr =\sfs$), \eqref{e.psi_smooth(4).1} (with $\sfp=\sfs$), and~\eqref{e.psi_smooth(4).2} (with $\sfp=\sfq=2\sfs$). This implies the statement on the Fréchet differentiability of this part and~\eqref{e.psi_smooth(4).0}, completing the proof.
\end{proof}

We also record a simple result based on Proposition~\ref{p.psi_smooth}.
Define the set
\begin{align}\label{e.Q_infty,<1}
    \mcl Q_{\infty, \leq 1} = \Ll\{p\in\mcl Q_\infty:\: |p|_{L^\infty}\leq 1\Rr\}.
\end{align}
For every $t\geq0$, $p ,q \in \mcl Q_\infty$, and $x\in\R^d$, we define
    \begin{e} \label{e.P_t,q(p;x)=}
        \msc P_{t,q}(p;x) = \psi(q +t \nabla \xi(p);x) - t \int_0^1 \theta(p(u)) \d u.
    \end{e}

\begin{lemma}\label{l.sP(p^x)_critical}
Let $t\in\R_+$ and $q\in\mcl Q_2$. There is a constant $C > 0$ such that, for every $x \in \R^d$, if $p^x \in \mcl Q_{\infty,\leq 1}$ satisfies 
\begin{align}\label{e.p^x=nablapsi}
    p^x = \nabla_q \psi\Ll(q+t\nabla \xi(p^x);x\Rr),
\end{align}
then we have 
\begin{align}\label{e.l.sP(p^x)_critical}
    \Ll|\msc P_{t,q}(p;x) - \msc P_{t,q}(p^x;x)\Rr| \leq C|p-p^x|_{L^\infty}^2,\quad\forall  p \in \mcl Q_{\infty,\leq 1}.
\end{align}
\end{lemma}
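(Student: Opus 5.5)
We will Taylor-expand $\msc P_{t,q}(\scdot;x)$ around $p^x$. We write $\Delta = p - p^x$, $q_p = q + t\nabla\xi(p)$ and $q_{p^x} = q+t\nabla\xi(p^x)$. Both $p$ and $p^x$ take values in $B$, since $|p|_{L^\infty}\leq 1$. On $B$ the function $\xi$ is a power series, hence smooth, with derivatives of every order bounded by constants depending only on $\xi$. Such constants are independent of $x$, which is what makes $C$ uniform in $x$.

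First, the $\psi$ term. By~\eqref{e.frechet_bound}, applied on $\mcl Q_2$ (note that $q_p,q_{p^x}\in\mcl Q_2$),
\begin{align*}
    \psi(q_p;x)-\psi(q_{p^x};x) = \la q_p-q_{p^x},\, \nabla_q\psi(q_{p^x};x)\ra_{L^2} + O\big(|q_p-q_{p^x}|_{L^2}^2\big).
\end{align*}
The fixed point relation~\eqref{e.p^x=nablapsi} says that $\nabla_q\psi(q_{p^x};x)=p^x$. Also $q_p-q_{p^x} = t(\nabla\xi(p)-\nabla\xi(p^x))$, whose $L^2$ norm is at most $t\|\nabla\xi\lfloor_B\|_{\mathrm{Lip}}|\Delta|_{L^\infty}$. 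Hence the $\psi$ contribution equals
\begin{align*}
    t\int_0^1 p^x(u)\cdot\big(\nabla\xi(p(u))-\nabla\xi(p^x(u))\big)\,\d u + O\big(t^2|\Delta|_{L^\infty}^2\big).
\end{align*}

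Second, the $\theta$ term, which we handle pointwise in $u$. Recall $\theta(a)=a\cdot\nabla\xi(a)-\xi(a)$. Set $a=p^x(u)$ and $b=p(u)$. The key algebraic identity is
\begin{align*}
    \theta(b)-\theta(a) - a\cdot(\nabla\xi(b)-\nabla\xi(a)) = (b-a)\cdot\nabla\xi(b) - \big(\xi(b)-\xi(a)\big).
\end{align*}
The right-hand side is the Taylor remainder of $\xi$ expanded at $b$ and evaluated at $a$, so it is bounded by $\tfrac12\sup_B|\nabla^2\xi|\,|b-a|^2$. This needs a uniform bound on $\nabla^2\xi$ over the convex set $B$, which holds by smoothness.

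Now subtract $t\int_0^1(\theta(p)-\theta(p^x))\,\d u$ from the $\psi$ contribution. The first-order terms $t\int_0^1 p^x\cdot(\nabla\xi(p)-\nabla\xi(p^x))$ cancel exactly against the corresponding part of the $\theta$ difference. This is the critical point property of $p^x$. What remains is $O\big((t+t^2)|\Delta|_{L^\infty}^2\big)$, with constants depending only on $\xi$ and $t$.

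The main (though mild) obstacle is bookkeeping. We must make sure the Fréchet bound~\eqref{e.frechet_bound} applies: $q_p$ lies in $\mcl Q_2$ because $\nabla\xi$ maps the increasing paths appearing here into increasing $\S^D_+$-valued paths, as implicitly assumed in the definition of $\msc P_{t,q}$. Otherwise we use the extended $\psi$ on $\mcl Q_1$ via Proposition~\ref{p.psi_smooth}. The pointwise identity for $\theta$ is the step that makes the linear terms cancel, and it is where the structure $\theta = a\cdot\nabla\xi - \xi$ is used.
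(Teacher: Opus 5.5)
Your proof is correct and follows the same route as the paper. Both expand $\psi$ via the Fréchet bound~\eqref{e.frechet_bound}, use the fixed point relation to identify the linear term as $t\langle p^x,\nabla\xi(p)-\nabla\xi(p^x)\rangle_{L^2}$, and cancel it against the first-order part of $t\int_0^1(\theta(p)-\theta(p^x))$. Your handling of the $\theta$ term is slightly cleaner: the paper linearizes $\theta$ and $\nabla\xi$ separately around $p^x$ (using $\nabla\theta(a)=\nabla^2\xi(a)a$, which needs third derivatives of $\xi$ on $B$), whereas your exact identity reduces everything to one second-order Taylor remainder of $\xi$ and needs only a bound on $\nabla^2\xi$ over $B$.
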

\begin{proof}
For brevity of notation, we write $\psi(\scdot) = \psi(\scdot;x)$.
Let $p\in\mcl Q_{\infty,\leq 1}$ and let $B$ be given as in~\eqref{e.B=}. Then, as paths, both $p$ and $p^x$ takes value in $B$. Let  $C_1=8t^2\Ll\|\nabla \xi\lfloor_{B}\Rr\|_{\mathrm{Lip}(B;\R^{D\times D})}$, using~\eqref{e.p^x=nablapsi} and the Fréchet differentiability of $\psi$ as in~\eqref{e.frechet_bound}, we have
\begin{align*}
    \Ll|\psi(q+t \nabla\xi(p))-\psi(q+t \nabla\xi(p^x))-\la p^x , t\nabla\xi(p)-t\nabla\xi(p^x)\ra_{L^2}\Rr|
    \\
    \leq 8\Ll|t\nabla\xi(p)-t\nabla\xi(p^x)\Rr|^2_{L^2} \leq C_1|p-p^x|^2_{L^\infty}.
\end{align*} 
Recall $\theta$ from~\eqref{e.theta=}. For each $a \in\R^{D\times D}$, we interpret $\nabla^2 \xi(a)$ as the linear operator on $\R^{D\times D}$ and thus we can interpret $\nabla \theta(a) = a\cdot\nabla^2\xi(a)$ as the linear operator $\R^{D\times D}$ defined by $a'\mapsto a\cdot\nabla^2\xi(a)a'$.
Using the smoothness of $\theta$ over $B$, for some constant $C_2$ only depending on $\xi$,  we also have
\begin{align*}
    \Ll|t \int_0^1\theta(p) -t \int_0^1\theta(p^x) - \la p^x , t\nabla^2\xi(p^x)(p-p^x) \ra_{L^2}\Rr| \leq tC_2\Ll|p-p^x\Rr|^2_{L^\infty},
\end{align*}
where $\nabla^2\xi(p^x)(p-p^x)$ is interpreted pointwise as the path $[0,1]\ni s\mapsto \nabla^2\xi(p^x(s))(p(s)-p^x(s))$. Similarly, for some constant $C_3$ only depending on $\xi$, we have
\begin{align*}
    \Ll|\la p^x , t\nabla\xi(p)-t\nabla\xi(p^x)\ra_{L^2} - \la p^x , t\nabla^2\xi(p^x)(p-p^x) \ra_{L^2} \Rr|\leq t C_3\Ll|p-p^x\Rr|^2_{L^\infty}.
\end{align*}
Combining the above three displays, we get~\eqref{e.l.sP(p^x)_critical}.
\end{proof}

\section{Short-time smooth solution of the Hamilton--Jacobi equation} \label{s.short_time}

We fix $x \in \R^d$. In this section, we are interested in the partial differential equation 
\begin{align}\label{e.hj}
    \begin{cases}
        \partial_t f - \int \xi(\nabla_q f) =0 \text{ on } (0,+\infty) \times \mcl Q_2 \\
        f(0,\cdot) = \psi(\scdot;x).
    \end{cases}
\end{align}
Equation \eqref{e.hj} is a so-called Hamilton--Jacobi equation. First, we explain the exact meaning of the term $\int \xi(\nabla_q f)$. Here, $\nabla_q f$ is path-valued. More precisely, $\nabla_q f(t,q)$ is equal to some path $p:[0,1)\to\S^D$. The nonlinearity $\int \xi(\nabla_q f(t,q))$ is understood as $\int_0^1\xi(p(s))\d s$.

 We define $X(\cdot,\scdot;x) : \R_+\times \mcl Q_\infty \to L^\infty$ by 
\begin{align}\label{e.X(t,q)=}
    X(t,q;x) = q - t\nabla\xi(\nabla_q\psi(q;x)).
\end{align}
In view of Proposition~\ref{p.psi_smooth}~\eqref{i.p.psi_smooth(2)}, the definition of $X(\cdot,\scdot;x)$ can be extended to $\R_+\times \mcl Q_1$. However, we restrict the domain to $\R_+\times \mcl Q_\infty$ for better regularity.

The family curves $t \mapsto X(t,q;x)$ parametrized by $q$ ($x$ is fixed) can be understood as the characteristic lines of equation \eqref{e.hj}. More precisely, it can be checked that if $f$ is a smooth solution of \eqref{e.hj}, then 
\begin{e}
    \frac{\d}{\d t} \nabla_q f(t,X(t,q;x);x) = 0. 
\end{e}
Thanks to this, one can verify by differentiation that we must have 
\begin{e} \label{e.f along char}
    f(t,X(t,q;x);x) = \msc P_{t,q}(p;x),
\end{e}
where $p = \nabla_q f(t,X(t,q;x);x) = \nabla_q \psi(q;x)$ and $\msc P_{t,q}$ is the functional defined in \eqref{e.P_t,q(p;x)=}. If the map $q \mapsto X(t,q;x)$ is a bijection for every $t > 0$, then \eqref{e.f along char} uniquely defines a function $f$ and it is possible to verify that $f(\cdot,\scdot;x)$ is a solution of \eqref{e.hj}. 

However, at least in finite dimensional spaces, the characteristic lines can cross. So, the function $q \mapsto X(t,q;x)$ is sometimes non-injective. This means that the recipe given by \eqref{e.f along char} to define a differentiable solution may not work. Even worse, Hamilton--Jacobi equations may sometimes not have any differentiable solutions \cite[Section~3.2.5.c Example 6]{evans}. To restore the existence and uniqueness of solutions, one needs to appeal to the notion of viscosity solution \cite[Section~10]{evans}. In this setting, the notion of solution is weakened in order to also allow non-differentiable functions to solve the equation. This theory can be extended to infinite dimensional spaces, and it can be shown that \eqref{e.hj} also admits a unique viscosity solution \cite{chen2022hamilton}. 

The goal of this section is to show that on the time interval $[0,t_\star)$, the characteristic lines do not cross. This allows us to define via \eqref{e.f along char} a differentiable solution $f(\cdot,\scdot;x)$ of \eqref{e.hj} on the interval $[0,t_\star)$. In addition, by the uniqueness of the viscosity solution, the function $f(\cdot,\scdot;x)$ we build must coincide with the viscosity solution. In the end, the differentiability of the viscosity solution on the interval $[0,t_\star)$ is going to be one of the key ingredients in the verification of \eqref{e.def.phi}.

Recall the definition of the operator norm in \eqref{e.def.opnorm}. We recall that to improve readability, we sometimes abbreviate $\nabla_q$ to $\nabla$.

\begin{lemma}\label{l.invert_grad_X}
        Fix any $x\in\R^d$. The function $X(\cdot,\scdot;x) :\R_+\times \mcl Q_\infty\to L^\infty$ is Fréchet differentiable. At each $(t,q) \in \R_+\times \mcl Q_\infty$, the derivative $\nabla_q X(t,q;x)$ is a bounded linear operator from $L^\infty$ to $L^\infty$ with expression
        \begin{align}\label{e.nablaX(t,q)=1-...}
            \nabla X(t,q;x) = \mathbf{I}-t\nabla^2\xi(\nabla\psi(q;x))\nabla^2 \psi(q;x),
        \end{align}
        where $\mathbf{I}:L^\infty\to L^\infty$ is the identity operator.
        Moreover, we have
        \begin{align}\label{e.|I-nablaX|_Op<}
            \Ll\|\mathbf{I}-\nabla X(t,q;x)\Rr\|_{\mathrm{Op}(L^\infty; L^\infty)}\leq t/t_\star.
        \end{align}
        As a consequence, if $t<t_\star$, then $\nabla X(t,q;x)$ is bijective on $L^\infty$.
\end{lemma}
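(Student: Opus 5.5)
The plan is to obtain the derivative by the chain rule, using Proposition~\ref{p.psi_smooth}~\eqref{i.p.psi_smooth(4)} with $\sfs=\infty$ and the smoothness of $\nabla\xi$ on $B$. Then the operator bound follows from multiplying the two operator norms. Finally, bijectivity follows from a Neumann series.

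\textbf{Differentiability.} Fix $x$ and write $\psi=\psi(\scdot;x)$. In $t$ the map $X$ is affine, so $\partial_t X(t,q)=-\nabla\xi(\nabla\psi(q))$, which lies in $L^\infty$ by \eqref{e.p.psi_smooth_1}. In $q$, Proposition~\ref{p.psi_smooth}~\eqref{i.p.psi_smooth(4)} with $\sfs=\infty$ shows that $\nabla\psi:\mcl Q_\infty\to L^\infty$ is Fréchet differentiable. The remainder is bounded by $C|q'-q|_{L^\infty}^2/2$, and the derivative $\nabla^2\psi(q)$ is bounded on $L^\infty$ by \eqref{e.psi_smooth(4).1}.

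Next, $\nabla\psi(q)$ takes values in $B$ by \eqref{e.p.psi_smooth_1}, and $\xi$ is smooth. Hence the Nemytskii map $p\mapsto\nabla\xi(p)$, acting pointwise in $s$, is Fréchet differentiable on $L^\infty$ near such $p$. Its derivative is the pointwise multiplication $\kappa\mapsto\nabla^2\xi(p(s))\kappa(s)$, and the second-order Taylor remainder is uniform in $s$. This uses that $\nabla^2\xi$ is bounded and locally Lipschitz on a neighbourhood of $B$.

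Composing the two maps gives formula \eqref{e.nablaX(t,q)=1-...}. Joint differentiability in $(t,q)$ also holds, because the $q$-derivative $t\nabla^2\xi(\nabla\psi)\nabla^2\psi$ depends continuously, in fact linearly, on $t$.

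\textbf{Operator bound.} By \eqref{e.nablaX(t,q)=1-...},
\[
\|\mathbf I-\nabla X\|_{\mathrm{Op}(L^\infty;L^\infty)}\le t\,\|\nabla^2\xi(\nabla\psi(q))\|_{\mathrm{Op}(L^\infty;L^\infty)}\,\|\nabla^2\psi(q)\|_{\mathrm{Op}(L^\infty;L^\infty)}.
\]
For the first factor, the pointwise multiplication operator with values in $B$ has $L^\infty$ operator norm at most $\sup_{a\in B}\|\nabla^2\xi(a)\|$. This in turn is at most $\|\nabla\xi\lfloor_B\|_{\mathrm{Lip}(B;\R^{D\times D})}$, since $B$ is convex.

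For the second factor, I need $\|\nabla^2\psi(q)\|_{\mathrm{Op}(L^\infty;L^\infty)}\le\|\nabla_q\psi\|_{\mathrm{Lip}(\mcl Q_\infty;\mcl Q_\infty)}$, matching the definition \eqref{e.t_star=} of $t_\star$. This is the step needing care. $\nabla^2\psi(q)$ is the derivative of $\nabla\psi$ along directions $q'-q$ with $q,q'\in\mcl Q_\infty$, so on such differences its norm is controlled by the Lipschitz constant. One then extends to all of $L^\infty$.

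The extension is the main obstacle, since $\mcl Q_\infty$ is a cone and its differences need not span $L^\infty([0,1);\S^D)$. I would first try to interpret the Lipschitz seminorm in \eqref{e.t_star=} so that it bounds the linearization. Concretely: for $q$ in the relative interior of the cone, differences $q'-q$ with small norm fill a neighbourhood of $0$ within the span of increasing paths, that is, paths of bounded variation. This bounds the operator on a weak-$*$ dense subspace. I would then pass to all of $L^\infty$ using the explicit formula \eqref{e.nabla^2psi=}, which is continuous under bounded a.e.\ convergence by dominated convergence.

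If this extension fails, the fallback is to replace the Lipschitz constant in \eqref{e.t_star=} by the bound $16$ from \eqref{e.psi_smooth(4).1}. This still gives the stated estimate, provided $t_\star$ is understood with the operator norm of $\nabla^2\psi$ in place of the Lipschitz constant of $\nabla_q\psi$.

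\textbf{Bijectivity.} If $t<t_\star$, then $\|\mathbf I-\nabla X\|<1$ by \eqref{e.|I-nablaX|_Op<}. Therefore $\nabla X=\mathbf I-(\mathbf I-\nabla X)$ is invertible on $L^\infty$, with bounded inverse given by the Neumann series $\sum_{k\ge0}(\mathbf I-\nabla X)^k$.
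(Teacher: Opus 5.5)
Your outline is the paper's: chain rule for $q\mapsto\nabla\xi(\nabla_q\psi(q;x))$ via \eqref{e.psi_smooth(4).0} with $\sfs=\infty$, factorising the operator norm, reducing to $\|\nabla_q^2\psi(q;x)\|_{\mathrm{Op}(L^\infty;L^\infty)}\le L:=\|\nabla_q\psi(\scdot;x)\|_{\mathrm{Lip}(\mcl Q_\infty;\mcl Q_\infty)}$, then a Neumann series. The gap is in exactly the step you flag, and it has three parts.

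\textbf{The BV claim is false.} You say that at a ``relative interior'' point, small admissible differences $q'-q$ fill a neighbourhood of $0$ among BV paths. Even at $q=\id$, the direction $\kappa=-\mathbf{1}_{[1/2,1)}\Id$ gives $q+\eps\kappa\notin\mcl Q_\infty$ for every $\eps>0$. More generally, adding a small downward jump at a continuity point of $q$ always leaves the cone. So no point is interior in BV, for either the $L^\infty$ or the total-variation norm.

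\textbf{General $q$ is not handled.} You never treat $q$ outside the ``interior'', for instance the constant paths $\hat y$ needed later for Theorem~\ref{t.rs}. At $q=\hat y$, the only $\kappa$ with $q\pm\eps\kappa\in\mcl Q_\infty$ are constants. The one-sided bound $|\nabla_q^2\psi(q)(q'-q)|_{L^\infty}\le L|q'-q|_{L^\infty}$ holds on the cone $\mcl Q_\infty-q$. It does not pass to the linear span with the same constant, since the triangle inequality loses a factor.

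\textbf{The fallback does not prove the statement.} It yields $\|\mathbf I-\nabla X\|\le 16t\|\nabla\xi\lfloor_B\|_{\mathrm{Lip}}$. This is weaker than $t/t_\star$, because in general only $t_\star^{-1}\le 16\|\nabla\xi\lfloor_B\|_{\mathrm{Lip}}$ holds. Invertibility would then follow only on a possibly shorter interval than $[0,t_\star)$, whereas Lemma~\ref{l.Z} and everything after it use the full interval.

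\textbf{The repair is local and is what the paper does.} Move the base point to $q_n=q+n^{-1}\id$, whose increments satisfy $q_n(s)-q_n(s')\ge n^{-1}(s-s')\Id$. Adding $n^{-1}\Id$ as well also secures $q_n(0)+\eps\kappa(0)\in\S^D_+$.
\begin{itemize}
\item For every Lipschitz $\kappa$, one then has $q_n+\eps\kappa\in\mcl Q_\infty$ for small $\eps>0$.
\item The expansion \eqref{e.psi_smooth(4).0} with $\sfs=\infty$ and the Lipschitz bound then give $|\langle\kappa',\nabla_q^2\psi(q_n)\kappa\rangle_{L^2}|\le L|\kappa'|_{L^1}|\kappa|_{L^\infty}$.
\item Pass to arbitrary $\kappa\in L^\infty$ by mollification. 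This gives $\kappa_k\to\kappa$ in every $L^\sfp$, $\sfp<\infty$, together with $|\kappa_k|_{L^\infty}\le|\kappa|_{L^\infty}$. That norm control is essential: weak-$*$ density, or a merely bounded a.e.\ approximation, would not preserve the constant $L$. Combine it with the dominated-convergence argument you suggest.
\item Finally, let $n\to\infty$ using \eqref{e.psi_smooth(4).2} with $\sfp=\sfq=\infty$.
\end{itemize}
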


The interpretation of~\eqref{e.nablaX(t,q)=1-...} is as follows. For every $\kappa\in L^\infty$, $\nabla X(t,q;x)\kappa \in L^\infty$ is the path
\begin{align}\label{e.interp_nablaX(t,q)=1-...}
    [0,1]\ni s \quad\mapsto\quad  \kappa(s) - t \nabla^2\xi(p(s)) (A\kappa)(s),
\end{align}
where $p = \nabla \psi(q;x)$ is a path in $\mcl Q_{\infty,\leq 1}$ (see~\eqref{e.p.psi_smooth_1}), $\nabla^2\xi(p(s))$ is a linear operator on $\R^{D\times D}$ (as $\xi$ is a function from $\R^{D\times D}$ to $\R$), $A=\nabla^2\psi(q;x)$ is a linear operator on $L^\infty$ (see~\eqref{e.psi_smooth(4).1}), $A\kappa$ is thus a path in $L^\infty$, and lastly $\nabla^2\xi(p(s)) (A\kappa)(s)$ is the action of the linear operator $\nabla^2\xi(p(s))$ on the $\R^{D\times D}$-valued element $(A\kappa)(s)$.

\begin{proof}
Since $x \in \R^d$ is fixed, for brevity we drop the $x$ dependence in the notation and write $\psi(q)$, $X(t,q)$ instead of $\psi(q;x)$, $X(t,q;x)$. Since the dependence of $X(t,q)$ on $t$ is trivial, we only show the Fréchet differentiability in $q$ at fixed $t$.  

Let $q,q'\in \mcl Q_\infty$.  By~\eqref{e.psi_smooth(4).0} with $\sfs=\infty$, we have
\begin{align*}
    \nabla\xi\Ll(\nabla\psi(q')\Rr) = \nabla\xi\Ll(\nabla\psi(q) + \nabla^2\psi(q)(q'-q) + p_\mathsf{error}\Rr),
\end{align*}
with $\Ll|p_\mathsf{error}\Rr|_{L^\infty} \leq C_0|q'-q|^2_{L^\infty}$ for some absolute constant $C_0$.
Let $B$ be as in~\eqref{e.B=} and 
$C_1 = \Ll\|\nabla^3\xi\lfloor_{B}\Rr\|_\infty = \Ll\|\nabla^2\xi\lfloor_{B}\Rr\|_\mathrm{Lip}$, we have, for $a,a' \in B$, 
\begin{e*}
    \nabla \xi(a') =\nabla \xi(a) + \nabla^2\xi(a)(a'-a) + \mathsf{error}_a(a'-a),
\end{e*}
where $|\mathsf{error}_a(a'-a)|\leq C_1|a'-a|^2$. By~\eqref{e.p.psi_smooth_1}, $\nabla\psi(q)$ and $\nabla\psi(q')$ are paths taking values in $B$. Hence, using the above two displays, we have that,
\begin{align*}
    \nabla\xi\Ll(\nabla\psi(q')\Rr) &= \nabla\xi\Ll(\nabla\psi(q)\Rr) + \nabla^2\xi\Ll(\nabla\psi(q')\Rr)\nabla^2\psi(q)(q'-q)  \\
    &+ \nabla^2\xi\Ll(\nabla\psi(q')\Rr)p_\mathsf{error}+\mathsf{error}_{\nabla\psi(q)}\Ll(\nabla \psi(q') - \nabla \psi(q)\Rr).
\end{align*}
Again by~\eqref{e.psi_smooth(4).1}, the last two terms are bounded in the $L^\infty$-norm by $|q'-q|_{L^\infty}^2$. This shows the Fréchet differentiability of the nonlinear part in $X(t,\scdot)$ (see~\eqref{e.X(t,q)=}), which implies the differentiability of $X(t,\scdot)$ and~\eqref{e.nablaX(t,q)=1-...}. 

Next, we verify~\eqref{e.|I-nablaX|_Op<}. Let $\kappa\in L^\infty$ and recall the interpretation of~\eqref{e.interp_nablaX(t,q)=1-...}. Then, we can compute
\begin{align} \label{e.|nabla^2xi(nablapsi)nabla^2psi<}
    \Ll|\nabla^2\xi\Ll(\nabla\psi(q')\Rr)\nabla^2\psi(q)\kappa\Rr|_{L^\infty} \stackrel{\eqref{e.p.psi_smooth_1}}{\leq} \sup_{a\in B}\Ll\|\nabla^2\xi(a)\Rr\|_{\mathrm{Op}(\R^{D\times D}; \R^{D\times D})}\Ll|\nabla^2\psi(q)\kappa\Rr|_{L^\infty} \notag
    \\
    \stackrel{\eqref{e.psi_smooth(4).1}}{\leq}\sup_{a\in B}\Ll\|\nabla^2\xi(a)\Rr\|_{\mathrm{Op}(\R^{D\times D}; \R^{D\times D})}\Ll\|\nabla^2\psi(q)\Rr\|_{\mathrm{Op}(L^\infty; L^\infty)} |\kappa|_{L^\infty} \\
    \leq t^{-1}_\star|\kappa|_{L^\infty}. \notag
\end{align}
The last inequality in the display follows from \eqref{e.t_star=} once we have shown the following inequality
\begin{e}\label{e.|nabla^2psi|<|nablapsi|_Lip}
    \Ll\|\nabla^2\psi(q)\Rr\|_{\mathrm{Op}(L^\infty; L^\infty)} \leq \|\nabla\psi\|_\mathrm{Lip(\mcl Q_\infty,\mcl Q_\infty)}.
\end{e}
%
%
%

By duality between the $L^\infty$ and $L^1$ norm, the previous display is equivalent to 
\begin{e} \label{e.replacement}
    \Ll|\la \kappa', \nabla^2\psi(q_n)\kappa\ra_{L^2}\Rr| \leq |\kappa'|_{L^1} \|\nabla\psi\|_\mathrm{Lip(\mcl Q_\infty,\mcl Q_\infty)} |\kappa|_{L^\infty}, \quad \forall (\kappa,\kappa') \in L^\infty \times L^1.
\end{e}
Let us prove this inequality. Recall $\id$ is the path $s\mapsto s\Id$ and $\Id \in S^D_+$ is the $D \times D$ identity matrix. Let $\kappa : [0,1]  \to S^D$, and $\kappa \in L^\infty$. For now we assume that $\kappa$ is Lipschitz. For each $n\in\N$, let $q_n = q + n^{-1}\id$. Then, for each $n\in\N$ and any we have $q_n+\eps\kappa\in \mcl Q_\infty$ for sufficiently small $\eps$ (depending on $n$ and $\kappa$). Using the differentiability of $\psi$ in Proposition~\ref{p.psi_smooth}~\eqref{i.p.psi_smooth(4)} (with $\sfs=\infty$), for any $\kappa,\kappa'\in L^\infty$, we have
\begin{align*}
    \la \kappa', \nabla^2\psi(q_n)\kappa\ra_{L^2} = \lim_{\eps\to0}\la \kappa', \frac{\nabla\psi(q_n+\eps \kappa)-\nabla\psi(q_n)}{\eps}\ra_{L^2}.
\end{align*}
This implies,
\begin{align}\label{e.||<assume_lip}
\begin{split}
    \Ll|\la \kappa', \nabla^2\psi(q_n)\kappa \ra_{L^2}\Rr|&\leq \lim_{\eps \to 0} |\kappa'|_{L^1} \left| \frac{\nabla\psi(q_n+\eps \kappa)-\nabla\psi(q_n)}{\eps} \right|_{L^\infty} \\
    &\leq |\kappa'|_{L^1}\Ll\|\nabla\psi\Rr\|_{\mathrm{Lip}(\mcl Q_\infty,\mcl Q_\infty)} |\kappa|_{L^\infty}. 
\end{split}
\end{align}
This yields \eqref{e.replacement} but only under the stronger assumption that $\kappa$ is Lipschitz. Now let $\kappa \in L^\infty$ without assuming that $\kappa$ is Lipschitz. We let $(\kappa_k)_k$ denote a sequence of approximations of $\kappa$ defined by mollification. By usual properties of mollifiers, we have $|\kappa_k|_{L^\infty} \leq |\kappa|_{L^\infty}$ and $\kappa_k \to \kappa$ in $L^{\msf p}$ for all $\msf p \in [1,+\infty)$ (see e.g. \cite[Theorem~7 in Appendix~C.5]{evans}). Since each $\kappa_k$ is Lipschitz on $[0,1]$, as argued in~\eqref{e.||<assume_lip}, we have that 
\begin{e*} 
    \Ll|\la \kappa', \nabla^2\psi(q_n)\kappa_k\ra_{L^2}\Rr| \leq |\kappa'|_{L^1} \|\nabla\psi\|_\mathrm{Lip(\mcl Q_\infty,\mcl Q_\infty)} |\kappa_k|_{L^\infty}.
\end{e*}
In addition, since $\nabla^2 \psi(q_n)$ is a bounded operator $L^2$ according to \eqref{e.psi_smooth(4).1} as given in Proposition~\ref{p.psi_smooth}~\eqref{i.p.psi_smooth(4)}, we have that $\nabla^2 \psi(q_n) \kappa_k$ converges to $\nabla^2 \psi(q_n) \kappa$ in $L^2$ as $k\to\infty$. By extracting a subsequence, we may assume that this convergence is also pointwise a.e. on $[0,1)$.
Proposition~\ref{p.psi_smooth}~\eqref{i.p.psi_smooth(4)} also states that $\nabla^2\psi(q_n)$ is a bounded operator on $L^\infty$. Hence, these $\nabla^2 \psi(q_n) \kappa_k$ are bounded a.e.\ uniformly in $k$, which allows us to apply the dominated convergence theorem to get
\begin{e*}
    \lim_{k \to +\infty} \langle \kappa', \nabla^2 \psi(q_n) \kappa_k \rangle_{L^2}= \langle \kappa', \nabla^2 \psi(q_n) \kappa \rangle_{L^2}.
\end{e*}
Together, the previous two displays yield
\begin{e*} 
    \Ll|\la \kappa', \nabla^2\psi(q_n)\kappa\ra_{L^2}\Rr| \leq |\kappa'|_{L^1} \|\nabla\psi\|_\mathrm{Lip(\mcl Q_\infty,\mcl Q_\infty)} |\kappa|_{L^\infty}.
\end{e*}
So we have proven~\eqref{e.replacement} at $q_n\in\mcl Q_{\infty,\upa}$. Using~\eqref{e.psi_smooth(4).2} from Proposition~\ref{p.psi_smooth} (with $\sfp=\sfq=\infty$) and the convergence of $q_n$ to $q$ in $L^\infty$, we can obtain~\eqref{e.|nabla^2psi|<|nablapsi|_Lip} at $q$.

Using~\eqref{e.|nabla^2xi(nablapsi)nabla^2psi<} and~\eqref{e.nablaX(t,q)=1-...}, we obtain~\eqref{e.|I-nablaX|_Op<}. Lastly, the bijectivity is a classical consequence of~\eqref{e.|I-nablaX|_Op<} (see~\cite[Exercise~6.14]{brezis2011functional}).
\end{proof}

\begin{lemma}\label{l.Z}
Fix any $x\in\R^d$ and write $\nabla=\nabla_q$.
There is a map $Z(\cdot,\scdot;x):[0,t_\star)\times \mcl Q_\infty\to \mcl Q_\infty$ such that 
    \begin{align}\label{e.q=XZ}
        q = X(t, Z(t,q;x);x),\quad\forall (t, q)\in  [0,t_\star)\times \mcl Q_\infty.
    \end{align}
This in particular implies $X(t,\mcl Q_\infty;x)\supset \mcl Q_\infty$. Moreover, $Z(\cdot,\scdot;x):[0,t_\star)\times \mcl Q_\infty\to\mcl Q_\infty$ is Fréchet differentiable everywhere and its derivative at $(t,q)\in [0,t_\star)\times \mcl Q_\infty$ is given by
\begin{align}\label{e.def_Z=}
\begin{split}
    \nabla Z(t,q;x) &= \Ll(\nabla X(t,Z(t,q;x);x)\Rr)^{-1}, 
    \\
    \partial_t Z(t,q;x) &= -\Ll(\nabla X(t,Z(t,q;x);x)\Rr)^{-1}\Ll(\nabla\xi(\nabla\psi(q;x))\Rr).
\end{split}
\end{align}
\end{lemma}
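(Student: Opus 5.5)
The plan is to construct $Z(t,q;x)$ as the fixed point of a contraction. Then I get its differentiability by inverting the linearization from Lemma~\ref{l.invert_grad_X} directly, rather than through an abstract implicit function theorem. The reason is that $\mcl Q_\infty$ is a cone, not an open subset of $L^\infty$. Fix $x$ and drop it from the notation.

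\emph{Construction.} By \eqref{e.X(t,q)=}, the identity $q=X(t,z)$ is equivalent to $z=T_{t,q}(z)$, where
\begin{equation*}
T_{t,q}(z)=q+t\nabla\xi(\nabla\psi(z)).
\end{equation*}
I will work on $\mcl Q_\infty$ with the $L^\infty$ distance. This set is closed in $L^\infty$, since uniform limits of bounded increasing $\S^D_+$-valued càdlàg paths remain such, identifying paths a.e. I need two facts.

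First, $T_{t,q}$ maps $\mcl Q_\infty$ into itself. By \eqref{e.p.psi_smooth_1}, $\nabla\psi(z)\in\mcl Q$ takes values in $B$. The path $s\mapsto\nabla\xi(\nabla\psi(z)(s))$ is then increasing in $\S^D_+$, bounded and càdlàg, as long as $\nabla\xi$ is monotone on $\S^D_+$. I expect this to be the standing structural property of $\xi$ used to make $\int\xi(\nabla f)$ meaningful, as it is for covariance functions; I would invoke it as such. Adding $q$ keeps the path in $\mcl Q_\infty$.

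Second, $T_{t,q}$ is a contraction. For $z,z'\in\mcl Q_\infty$, both $\nabla\psi(z)$ and $\nabla\psi(z')$ lie in $B$, so by \eqref{e.t_star=}
\begin{equation*}
|T_{t,q}(z)-T_{t,q}(z')|_{L^\infty}\leq t\,\|\nabla\xi\lfloor_B\|_{\mathrm{Lip}}\,\|\nabla\psi\|_{\mathrm{Lip}(\mcl Q_\infty;\mcl Q_\infty)}|z-z'|_{L^\infty}\leq \tfrac{t}{t_\star}|z-z'|_{L^\infty}.
\end{equation*}
For $t<t_\star$, Banach's fixed point theorem gives a unique $Z(t,q)\in\mcl Q_\infty$ with $X(t,Z(t,q))=q$. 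This proves \eqref{e.q=XZ} and the inclusion $X(t,\mcl Q_\infty)\supset\mcl Q_\infty$.

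\emph{Lipschitz continuity.} I next derive a priori bounds from the fixed-point identity. Subtracting $Z=q+t\nabla\xi(\nabla\psi(Z))$ at $(t,q)$ and at $(t',q')$ and using the contraction estimate gives
\begin{equation*}
(1-t/t_\star)\,|Z(t',q')-Z(t,q)|_{L^\infty}\leq |q'-q|_{L^\infty}+C|t'-t|.
\end{equation*}
Here $C=\sup_B|\nabla\xi|$. So $Z$ is locally Lipschitz on $[0,t_\star)\times\mcl Q_\infty$.

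\emph{Differentiability.} Write $Z=Z(t,q)$ and $Z'=Z(t',q')$. Using the Fréchet differentiability of $X$ from Lemma~\ref{l.invert_grad_X}, and the fact that $\partial_tX(t,z)=-\nabla\xi(\nabla\psi(z))$, I get
\begin{equation*}
q'-q=X(t',Z')-X(t,Z)=\nabla X(t,Z)(Z'-Z)-(t'-t)\nabla\xi(\nabla\psi(Z))+o\big(|Z'-Z|_{L^\infty}+|t'-t|\big).
\end{equation*}
By the Lipschitz bound, the error term is $o(|q'-q|_{L^\infty}+|t'-t|)$. Lemma~\ref{l.invert_grad_X} says $\nabla X(t,Z)$ is invertible on $L^\infty$, with inverse of norm at most $(1-t/t_\star)^{-1}$ by the Neumann series. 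Applying this inverse gives
\begin{equation*}
Z'-Z=(\nabla X(t,Z))^{-1}\big[(q'-q)+(t'-t)\nabla\xi(\nabla\psi(Z))\big]+o\big(|q'-q|_{L^\infty}+|t'-t|\big).
\end{equation*}
This is the Fréchet differentiability of $Z$, with the formulas \eqref{e.def_Z=}. The argument of $\nabla\xi(\nabla\psi(\cdot))$ appearing in $\partial_tZ$ is naturally $Z(t,q)$. Note also that the sign of the $t$-term depends on the convention for $\partial_tX$, and must be checked against \eqref{e.def_Z=}.

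\emph{Main obstacle.} The delicate points are the invariance $T_{t,q}(\mcl Q_\infty)\subset\mcl Q_\infty$, which needs monotonicity of $\nabla\xi$ on $\S^D_+$, and the remainder of $X$ being uniformly small. For the latter, I would reuse the second-order estimate \eqref{e.psi_smooth(4).0} with $\sfs=\infty$, together with the bound on $\nabla^3\xi$ over $B$, exactly as in the proof of Lemma~\ref{l.invert_grad_X}. This yields an $O(|Z'-Z|^2_{L^\infty})$ error, which is enough.
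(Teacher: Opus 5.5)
Your proposal follows the paper's proof essentially step for step: Banach's fixed point theorem for $z\mapsto q+t\nabla\xi(\nabla\psi(z))$ with contraction constant $t/t_\star$ (where you make explicit the monotonicity of $\nabla\xi$ on $\S^D_+$ that the paper uses without comment to keep $\mcl Q_\infty$ invariant), then a Lipschitz bound on $Z$ read off from the fixed-point identity, then inversion of $\nabla X(t,Z(t,q))$ in the first-order expansion of $q=X(t,Z(t,q))$. Your resulting formula $\partial_t Z(t,q)=(\nabla X(t,Z(t,q)))^{-1}\nabla\xi(\nabla\psi(Z(t,q)))$ is the correct one: at $t=0$ it reduces to $\nabla\xi(\nabla\psi(q))$, as it must since $Z(t,q)=q+t\nabla\xi(\nabla\psi(Z(t,q)))$, so the minus sign and the argument $q$ in \eqref{e.def_Z=} are typos, and they are harmless because Proposition~\ref{p.f=UZ} only uses the implicit relation $\partial_t X^Z+(\nabla X^Z)\partial_t Z=0$.
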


Here, $\nabla X(t,Z(t,q);x)$ is the derivative of $X(t,\scdot;x)$ evaluated at $Z(t,q;x)$; $\Ll(\nabla X(t,Z(t,q;x);x)\Rr)^{-1}$ is the inverse of the operator $\nabla X(t,Z(t,q;x);x)$ given as in Lemma~\ref{l.invert_grad_X}; and $\Ll(\nabla X(t,Z(t,q;x);x)\Rr)^{-1}\Ll(\nabla\xi(\nabla\psi(q;x))\Rr)$ applies the operator at $\nabla\xi(\nabla\psi(q;x))\in L^\infty$.

\begin{proof}
Again, for brevity, we omit the $x$ dependence and write $\psi$, $X(\cdot,\cdot)$, and $Z(\cdot,\cdot)$ in place of $\psi(\scdot;x)$, $X(\cdot,\scdot;x)$, and $Z(\cdot,\scdot;x)$.
Fix any $t\in[0,t_\star)$.
Let $q'\in\mcl Q_\infty$ and consider the mapping from $\Phi:\mcl Q_\infty\to\mcl Q_\infty$ given by
\begin{align*}
    \Phi(q) = q' + t\nabla\xi(\nabla\psi(q)),\quad\forall q \in \mcl Q_\infty.
\end{align*}
By~\eqref{e.p.psi_smooth_1}, this is well-defined. In view of the definition of $t_\star$ in~\eqref{e.t_star=}, we have $\|\Phi\|_\mathrm{Lip(\mcl Q_\infty;\mcl Q_\infty)}\leq t/t_\star <1$. Hence, by the fixed point iteration, there is a unique $q\in\mcl Q_\infty$ such that $q =\Phi(q)$ and thus $q'=X(t,q)$.
Hence, for each $q'\in\mcl Q_2$, we can define $Z(t,q') =q$. This immediately gives~\eqref{e.q=XZ}.

Next, we want to verify that $Z$ is Fréchet differentiable. For this purpose, we first verify that $Z$ is Lipschitz continuous on $[t,t_1] \times \mcl Q_\infty$ for all $t_1<t_\star$. From
\begin{align}\label{e.q=X(t,Z(t,q))=Z...}
    q\stackrel{\eqref{e.q=XZ}}{=} X(t,Z(t,q)) \stackrel{\eqref{e.X(t,q)=}}{=} Z(t,q) - t\nabla\xi(\nabla\psi(Z(t,q))),
\end{align}
and the definition of $t_\star$ in~\eqref{e.t_star=}, we can deduce
\begin{e*}
    \sup_{t\in[0,t_1]}\|Z(t,\scdot)\|_\mathrm{Lip(\mcl Q_\infty;\mcl Q_\infty)}<\infty.
\end{e*}
Similarly, from~\eqref{e.q=X(t,Z(t,q))=Z...} and the boundedness of $\nabla\psi$ as in~\eqref{e.p.psi_smooth_1}, we can deduce the Lipschitzness of $Z(\scdot,q)$ on $[0,t_1]$ uniformly in each $q\in\mcl Q_\infty$. Together, we have the joint Lipschitz of $Z$ on $[t,t_1]\times \mcl Q_\infty$.

Now, we are ready to prove the differentiability of $Z$.
Fix any $(t,q) \in [0,t_\star)\times \mcl Q_\infty$ and write $p=Z(t,q)$. As $(t',q')$ approaching $(t,q)$, we can use the Fréchet differentiability of $X$ from Lemma~\ref{l.invert_grad_X} and~\eqref{e.q=XZ} (at both $(t,q)$ and $(t',q')$) to see
\begin{align*}
    \Ll|q'-q - \partial_t X(t,p)(t'-t) -\nabla X(t,p)\Ll(Z(t',q')-Z(t,q)\Rr)\Rr|_{L^\infty}
    \\= o\Ll(|t'-t|+\Ll|Z(t',q')-Z(t,q)\Rr|_{L^\infty}\Rr).
\end{align*}
Notice that $\partial_t X(t,p) = -\nabla\xi(\nabla\psi(q))$ simply due to~\eqref{e.X(t,q)=}.
By the Lipschitzness of $Z$, the error is controlled by $o\Ll(|t'-t|+|q'-q|_{L^\infty}\Rr)$. Using this and the invertibility of $\nabla X(t,p)$ given by Lemma~\ref{l.invert_grad_X}, we can see that $Z$ is Fréchet differentiable at $(t,q)$ and has its derivative given as in~\eqref{e.def_Z=}.
\end{proof}

Associated with the characteristic line $X(\cdot,\scdot;x)$ is the function $U(\cdot,\scdot;x):\R_+\times \mcl Q_\infty\to \R$ defined by, for every $(t,q) \in \R_+\times \mcl Q_\infty$,
\begin{align}\label{e.U=}
    U(t,q;x)=\psi(q;x)-t \la\nabla\xi(\nabla\psi(q;x)),\,\nabla \psi(q;x)\ra_{L^2}+ t\int\xi(\nabla\psi(q;x)).
\end{align}
Also, recall $\msc P_{t,q}(\scdot;x)$ defined in~\eqref{e.P_t,q(p;x)=}.
\begin{proposition} \label{p.f=UZ}

Fix any $x\in\R^d$. Let $f(\scdot,\scdot;x)$ be defined by
\begin{align}\label{e.f=UZ}
    f(t,q;x) = U(t,Z(t,q;x);x),\quad\forall (t,q)\in [0,t_\star)\times \mcl Q_\infty.
\end{align}
Then, $f(\scdot,\scdot;x)$ is twice Fréchet differentiable everywhere on $[0,t_\star)\times \mcl Q_\infty$ and for every $(t,q) \in [0,t_\star) \times \mcl Q_\infty$ we have  
\begin{e*}
    \partial_t f(t,q) - \int \xi(\nabla_q f(t,q)) =0.
\end{e*}
Furthermore, writing $p = \nabla_q f(t,q;x)$, we have
\begin{gather}
    p= \nabla_q \psi\big(q+t\nabla \xi (p);x\big),\label{e.fixed_pt_for_f}
    \\
    f(t,q;x) = \msc P_{t,q}\big(p; x\big).\label{e.fixed_pt_for_f_2}
\end{gather}
\end{proposition}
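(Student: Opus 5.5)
The argument is the classical method of characteristics, made rigorous by Lemmas~\ref{l.invert_grad_X} and~\ref{l.Z}. We drop the dependence on $x$ from the notation. The central identity to establish is
\begin{equation*}
\nabla_q f(t,q) = \nabla\psi(Z(t,q)).
\end{equation*}
By Lemma~\ref{l.Z}, $f=U(t,Z(t,q))$ is a composition of Fréchet differentiable maps, since $U$ is built from $\psi$, $\nabla\psi$ and $\xi$. By the chain rule, $\nabla_q f(t,q)=\nabla Z(t,q)^{*}\nabla_q U(t,Z(t,q))$.

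First compute $\nabla_q U(t,p)$ at $p\in\mcl Q_\infty$. Differentiating \eqref{e.U=} in the direction $\kappa$ gives
\begin{equation*}
\la \kappa,\nabla\psi(p)\ra - t\la \nabla^2\xi(\nabla\psi(p))\nabla^2\psi(p)\kappa,\nabla\psi(p)\ra - t\la \nabla\xi(\nabla\psi(p)),\nabla^2\psi(p)\kappa\ra + t\la\nabla\xi(\nabla\psi(p)),\nabla^2\psi(p)\kappa\ra .
\end{equation*}
The last two terms cancel. Using the pointwise symmetry of $\nabla^2\xi$, what remains is $\la \nabla X(t,p)\kappa,\nabla\psi(p)\ra$ by \eqref{e.nablaX(t,q)=1-...}. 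Combining this with $\nabla Z=(\nabla X)^{-1}$ from \eqref{e.def_Z=}, we get $\la \nabla_q f(t,q),\kappa\ra=\la \nabla\psi(Z(t,q)),\kappa\ra$ for all $\kappa\in L^\infty$. Hence $p:=\nabla_q f(t,q)=\nabla\psi(Z(t,q))$.

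For the time derivative, $\partial_t U(t,p)=-\la\nabla\xi(\nabla\psi(p)),\nabla\psi(p)\ra+\int\xi(\nabla\psi(p))$. The chain rule then gives
\begin{equation*}
\partial_t f = \partial_t U + \la \nabla_q U,\partial_t Z\ra .
\end{equation*}
By \eqref{e.def_Z=}, $\la\nabla_q U,\partial_t Z\ra = \la \nabla\psi(Z),\nabla X\,\partial_t Z\ra = \la \nabla\psi(Z),\nabla\xi(\nabla\psi(Z))\ra$. Here I read the argument $\nabla\psi(q)$ in \eqref{e.def_Z=} as evaluated at $Z$, which is what differentiating \eqref{e.q=XZ} in $t$ actually gives. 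This term cancels the first term of $\partial_t U$, leaving $\partial_t f=\int\xi(p)$, which is the equation.

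Next, \eqref{e.fixed_pt_for_f} follows from \eqref{e.q=XZ}, which says $Z(t,q)=q+t\nabla\xi(\nabla\psi(Z(t,q)))=q+t\nabla\xi(p)$. Therefore $p=\nabla\psi(q+t\nabla\xi(p))$. Then \eqref{e.fixed_pt_for_f_2} is algebra: $U(t,Z)=\psi(q+t\nabla\xi(p))-t\int(p\cdot\nabla\xi(p)-\xi(p))=\msc P_{t,q}(p)$ by \eqref{e.theta=}.

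Twice differentiability comes from $\nabla_q f=\nabla\psi\circ Z$ and $\partial_t f=\int\xi(\nabla\psi\circ Z)$. Both are compositions of Fréchet differentiable maps: $Z$ by Lemma~\ref{l.Z}, and $\nabla\psi$ by Proposition~\ref{p.psi_smooth}~\eqref{i.p.psi_smooth(4)} with $\sfs=\infty$.

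The main obstacle is justifying the chain rule and adjoint manipulations in the right function spaces. $Z$ is differentiable as a map into $L^\infty$ on the non-open set $\mcl Q_\infty$. $U$ must be shown Fréchet differentiable from $L^\infty$ using \eqref{e.psi_smooth(4).0} and the smoothness of $\xi$ on $B$, noting that $\nabla\psi(p)\in\mcl Q_{\infty,\le1}$ by \eqref{e.p.psi_smooth_1}. The symmetry of $\nabla^2\psi(p)$ as a bilinear form, visible from \eqref{e.nabla^2psi=}, has to be used to move operators across the pairing. I would handle the domain issue as in the proof of Lemma~\ref{l.invert_grad_X}, first testing with admissible directions and then approximating.
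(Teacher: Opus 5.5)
Your proposal is correct and follows the paper's proof essentially step for step. You show $\nabla_q U=(\nabla X)^\intercal\nabla\psi$ to get $\nabla_q f=\nabla\psi(Z)$, cancel the cross term in $\partial_t f$ by differentiating \eqref{e.q=XZ} in $t$, and read off \eqref{e.fixed_pt_for_f}, \eqref{e.fixed_pt_for_f_2} and twice differentiability from $Z=q+t\nabla\xi(\nabla\psi(Z))$ and $\nabla_q f=\nabla\psi\circ Z$. Differentiating \eqref{e.q=XZ} correctly gives $\nabla X\,\partial_t Z=+\nabla\xi(\nabla\psi(Z))$, fixing both the evaluation point and the sign misprinted in \eqref{e.def_Z=}. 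Also, no symmetry of $\nabla^2\xi$ or $\nabla^2\psi$ is actually needed, since your directional derivative of $U$ is already $\la\nabla X(t,p)\kappa,\nabla\psi(p)\ra$ and only adjoints enter.
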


\begin{proof}
Again, for brievity we write $f(\scdot,\scdot)=f(\scdot,\scdot;x)$, $\psi=\psi(\scdot;x)$, and $U(\cdot,\cdot) = U(\cdot,\scdot;x)$.
We first verify the initial condition. Since $U(0,\scdot) =\psi$ and $Z(0,\scdot)$ is the identity map (due to~\eqref{e.X(t,q)=} and~\eqref{e.q=XZ}), we get $f(0,\scdot)=\psi$ as desired.

From the definition of $f$ in~\eqref{e.f=UZ}, that of $U$ in~\eqref{e.U=}, Fréchet differentiability of $Z$ in Lemma~\ref{l.Z}, and regularity of $\psi$ from Proposition~\ref{p.psi_smooth}, it is straightforward to check that $f$ is Fréchet differentiable everywhere on $[0,t_\star)\times \mcl Q_\infty$. In the following, we first verify that $f$ satisfies the equation~\eqref{e.hj}, in the process of which we will also show that $f$ is in fact twice differentiable.

For $(t,q)$ clear from the context and any relevant function $h$, we write $\nabla h^Z$ or $\partial_t h^Z$ for the evaluation of $\nabla h$ or $\partial_t h$ at the spatial variable $Z(t,q)$. For brevity, we also write
\begin{align*}
    g =\nabla\xi(\nabla\psi(\scdot)),\qquad \la\scdot,\scdot\ra=\la\scdot,\scdot\ra_{L^2}.
\end{align*}
In the following computations, we keep $(t,q)\in [0,t_\star)\times \mcl Q_\infty$ implicit.
We start with some basics:
\begin{align}
    \nabla U &\stackrel{\eqref{e.U=}}{=}  \nabla \psi - t \Ll(\nabla g\Rr)^\intercal\Ll( \nabla \psi\Rr),\notag
    \\
    \nabla X & \stackrel{\eqref{e.X(t,q)=}}{=}  \mathbf{I} - t\nabla g,\notag
    \\
    \nabla f & \stackrel{\eqref{e.f=UZ}}{=} (\nabla Z)^\intercal \nabla U^Z,\label{e.nablau=nablaZnablaU}
\end{align}
where $\mathbf{I}$ is identity operator on $L^\infty$. The first two relations give
\begin{align}\label{e.grad_U=}
    \nabla U = (\nabla X)^\intercal \nabla \psi.
\end{align}
By~\eqref{e.q=XZ}, we also have
\begin{align*}
    (\nabla Z(t,q))(\nabla X(t,Z(t,q))) = \mathbf{I},
\end{align*}
where the product on the left is the composition of two linear operators on $L^\infty$.
The above two displays along with~\eqref{e.nablau=nablaZnablaU} imply
\begin{align}
    \nabla f= \nabla \psi^Z.\label{e.nabla_u=}
\end{align}

Next, we compute the derivative of $U$ in $t$. We start with
\begin{align}
    &\partial_t X \stackrel{\eqref{e.X(t,q)=}}{=} -\nabla \xi(\nabla\psi),\label{e.X_t=}
    \\
    &\partial_t f \stackrel{\eqref{e.f=UZ}}{=} \partial_t  U^Z +\la \nabla U^Z ,\, \partial_t  Z\ra \notag
    \\
    & \stackrel{\eqref{e.U=}\eqref{e.grad_U=}}{=}-\la \nabla \xi(\nabla\psi^Z),\, \nabla \psi^Z\ra + \int\xi(\nabla \psi^Z) + \la (\nabla X^Z)^\intercal \nabla\psi^Z ,\, \partial_t Z\ra. \label{e.u_t=}
\end{align}
From~\eqref{e.q=XZ}, we get
\begin{align*}
    \partial_t X^Z + (\nabla X^Z) \partial_t Z =0.
\end{align*}
Using this and \eqref{e.X_t=} to see that the first and third terms in~\eqref{e.u_t=} cancel each other, we get
\begin{align*}
    \partial_t f = \int \xi(\nabla\psi^Z).
\end{align*}
This along with~\eqref{e.nabla_u=} implies that $f$ satisfies the equation~\eqref{e.hj} everywhere on $[0,t_\star)\times \mcl Q_\infty$ in the classical sense. Moreover, we can also infer from the above display,~\eqref{e.nabla_u=}, the differentiability of $Z$ in Lemma~\ref{l.Z}, and the regularity of $\psi$ in Proposition~\ref{p.psi_smooth} that $f$ is twice Fréchet differentiable.

Lastly, we verify~\eqref{e.fixed_pt_for_f} and~\eqref{e.fixed_pt_for_f_2}. We start with rewriting~\eqref{e.nabla_u=} as
\begin{align*}
    \nabla f(t,q) = \nabla \psi(Z(t,q)).
\end{align*}
Using~\eqref{e.X(t,q)=} and~\eqref{e.q=XZ}, we can get
\begin{align*}
    Z(t,q) = q+t\nabla \xi(\nabla\psi(Z(t,q))).
\end{align*}
Combining these two displays, we get~\eqref{e.fixed_pt_for_f}. Also, inserting these two relations into~\eqref{e.f=UZ} and recalling the definition of $U$ in~\eqref{e.U=} and $\msc P_{t,q}$ in~\eqref{e.P_t,q(p;x)=}, we can verify~\eqref{e.fixed_pt_for_f_2}. 
\end{proof}


\begin{lemma}\label{l.unique crit point} 
For every $x\in\R^d$ and $(t,q) \in [0,t_\star)\times \mcl Q_\infty$, there is a unique solution $p$ to $ p' = \nabla_q \psi (q+ t\nabla\xi(p');x)$ over $p'\in\mcl Q_\infty$.
\end{lemma}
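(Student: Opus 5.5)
Existence follows from Proposition~\ref{p.f=UZ}: the path $p=\nabla_q f(t,q;x)=\nabla_q\psi(Z(t,q;x);x)$ lies in $\mcl Q_\infty$ by~\eqref{e.p.psi_smooth_1}, and it satisfies~\eqref{e.fixed_pt_for_f}. So the real content of the lemma is uniqueness. I will prove uniqueness by changing variables: a solution $p'$ is sent to $r'=q+t\nabla\xi(p')$, and this turns the equation in $p'$ into the contraction fixed-point problem already solved in the proof of Lemma~\ref{l.Z}.

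\emph{Step 1 (the solution lies in the unit ball).} Let $p'\in\mcl Q_\infty$ solve $p'=\nabla_q\psi(q+t\nabla\xi(p');x)$. First I need $q+t\nabla\xi(p')\in\mcl Q_\infty$, so that $\nabla_q\psi$ is evaluated on its domain. This holds because $\nabla\xi$ maps increasing $\S^D_+$-valued paths to increasing paths. That is the standing structure in this framework, the same one used implicitly when the map $\Phi$ in Lemma~\ref{l.Z} is taken to send $\mcl Q_\infty$ to $\mcl Q_\infty$, and I will invoke it in the same way. Then~\eqref{e.p.psi_smooth_1} gives $|p'|_{L^\infty}\le 1$. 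Hence every solution lies in $\mcl Q_{\infty,\le1}$ and takes values in $B$.

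\emph{Step 2 (reduction to $\Phi$).} Set $r'=q+t\nabla\xi(p')\in\mcl Q_\infty$. Then $p'=\nabla_q\psi(r';x)$, and so $r'=q+t\nabla\xi(\nabla_q\psi(r';x))=\Phi(r')$, where $\Phi$ is the map from the proof of Lemma~\ref{l.Z} with $q'$ there replaced by $q$. That proof shows $\|\Phi\|_{\mathrm{Lip}(\mcl Q_\infty;\mcl Q_\infty)}\le t/t_\star<1$. The ingredients are that $\nabla_q\psi$ takes values in the ball where $\nabla\xi\lfloor_B$ has the Lipschitz constant appearing in~\eqref{e.t_star=}, together with the Lipschitz bound on $\nabla_q\psi(\scdot;x)$ that also enters~\eqref{e.t_star=}. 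Consequently $\Phi$ has exactly one fixed point, namely $r'=Z(t,q;x)$ from Lemma~\ref{l.Z}.

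\emph{Step 3 (conclusion).} Any two solutions $p_1,p_2$ therefore give the same $r'=Z(t,q;x)$, and so $p_1=\nabla_q\psi(r';x)=p_2$. This proves uniqueness. The same argument also recovers existence: $p=\nabla_q\psi(Z(t,q;x);x)$ is a solution.

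If one prefers not to pass through $r'$, there is a direct alternative. For two solutions, Lipschitz continuity of $\nabla_q\psi$ and of $\nabla\xi\lfloor_B$ in $L^\infty$ gives $|p_1-p_2|_{L^\infty}\le (t/t_\star)|p_1-p_2|_{L^\infty}$, which forces $p_1=p_2$ since $t<t_\star$.

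The only delicate point is Step 1, namely that $q+t\nabla\xi(p')$ really lies in $\mcl Q_\infty$. This is needed both so that the equation makes sense and so that the Lipschitz constant of $\nabla_q\psi$ on $\mcl Q_\infty$ applies. I will handle it exactly as the paper handles well-definedness of $\Phi$ in Lemma~\ref{l.Z}.
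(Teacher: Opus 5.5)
Your proof is correct, but it is organized differently from the paper's. The paper applies the Banach fixed point theorem directly to $p\mapsto \nabla_q\psi(q+t\nabla\xi(p);x)$ on $\mcl Q_\infty$. It bounds the Lipschitz constant of this map by $t/t_\star$ via \eqref{e.t_star=}, which gives existence and uniqueness at once and does not use Lemma~\ref{l.Z} or Proposition~\ref{p.f=UZ}.

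Write $S(r)=\nabla_q\psi(r;x)$ and $T(p)=q+t\nabla\xi(p)$. The paper's map is $S\circ T$, while the map $\Phi$ from Lemma~\ref{l.Z} is $T\circ S$. Your main argument rests on the fact that fixed points of $S\circ T$ and of $T\circ S$ correspond bijectively via $T$ and $S$. Uniqueness is then inherited from the contraction $\Phi$, and existence comes from $Z(t,q;x)$.

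Your route buys two things.
\begin{itemize}
\item It identifies the solution explicitly as $\nabla_q\psi(Z(t,q;x);x)=\nabla_q f(t,q;x)$. This is the identification the paper later needs for $\nabla_q f(t,q;x)=p^x$ in Theorem~\ref{t.free_energy}.
\item It is slightly cleaner on one technical point. $\Phi$ only evaluates $\nabla\xi$ on values of $\nabla_q\psi$, which lie in the unit ball $B$ by \eqref{e.p.psi_smooth_1}. So the constant $\|\nabla\xi\lfloor_B\|_{\mathrm{Lip}}$ legitimately applies on all of $\mcl Q_\infty$. By contrast, the paper's bound \eqref{e.pf.l.unique crit point}, stated for arbitrary $p,p'\in\mcl Q_\infty$, tacitly needs a restriction to $\mcl Q_{\infty,\leq 1}$ first. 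That restriction is harmless, since the map sends $\mcl Q_\infty$ into this set.
\end{itemize}

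Your ``direct alternative'' at the end is essentially the paper's argument, used only for uniqueness. Your Step~1 relies on the same unstated fact the paper uses when it asserts the map is well defined into $\mcl Q_\infty$: that $\nabla\xi$ sends increasing $\S^D_+$-valued paths to increasing ones.
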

\begin{proof}
By~\eqref{e.p.psi_smooth_1} in Proposition~\ref{p.psi_smooth}, $p\mapsto \nabla_q\psi(q+t\nabla\xi(p);x)$ is a well-defined function from $\mcl Q_\infty$ to $\mcl Q_\infty$. By the definition of $t_\star$ in~\eqref{e.t_star=}, we have
\begin{align}\label{e.pf.l.unique crit point}
    \Ll|\nabla_q \psi (q+ t\nabla\xi(p);x)-\nabla_q \psi (q+ t\nabla\xi(p');x)\Rr|_{L^\infty}\leq tt_\star^{-1}|p-p'|_{L^\infty},\quad\forall p,p'\in\mcl Q_\infty.
\end{align}
Then, the desired result follows from the Banach fixed point theorem.
\end{proof}

Finally, for an approximation argument later, we need the following lemma, which establishes the Lipschitz continuity of $f$ under the $L^1$ topology.

\begin{lemma}\label{l.cty_f_weak} 
Fix any $x\in\R^d$. Then, $f(\scdot,\scdot;x):[0,t_\star)\times\mcl Q_\infty\to\R$ is Lipschitz continuous with respect to the $L^1$-norm. More precisely, we have for every $(t,q),(t',q') \in [0,t_\star)\times\mcl Q_\infty$,
\begin{e}
     |f(t,q,x) - f(t',q',x)| \leq  | q-q' |_{L^1} +  |t-t'| \sup_{|a| \leq 1} |\xi(a)|.
\end{e}
In particular, this means that $f$ can be extended by continuity to a function on $[0,t_\star) \times \mcl Q_1$.
\end{lemma}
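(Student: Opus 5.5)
Since Proposition~\ref{p.f=UZ} shows that $f(\scdot,\scdot;x)$ is Fréchet differentiable on $[0,t_\star)\times\mcl Q_\infty$ and solves the equation classically, the plan is to estimate $f$ along the segment joining two points. Let $(t,q),(t',q')\in[0,t_\star)\times\mcl Q_\infty$. The set $[0,t_\star)\times\mcl Q_\infty$ is convex, so $\lambda\mapsto (t_\lambda,q_\lambda) = (t+\lambda(t'-t),\, q+\lambda(q'-q))$ stays in the domain. By the chain rule,
\begin{align*}
    \frac{\d}{\d\lambda} f(t_\lambda,q_\lambda;x) = (t'-t)\,\partial_t f(t_\lambda,q_\lambda;x) + \la q'-q,\, \nabla_q f(t_\lambda,q_\lambda;x)\ra_{L^2}.
\end{align*}

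The two terms are bounded separately. For the spatial term, the identity \eqref{e.nabla_u=} in the proof of Proposition~\ref{p.f=UZ} gives $\nabla_q f(t_\lambda,q_\lambda;x) = \nabla_q\psi(Z(t_\lambda,q_\lambda;x);x)$. This lies in $\mcl Q_\infty$, and by \eqref{e.p.psi_smooth_1} its $L^\infty$-norm is at most $1$. Hence $|\la q'-q,\nabla_q f\ra_{L^2}|\leq |q'-q|_{L^1}$. One should check the norm convention here: the pairing is $\int_0^1 a(s)\cdot b(s)\,\d s$ with the Frobenius product, and $|b(s)|\leq 1$ pointwise, so Cauchy--Schwarz in $\R^{D\times D}$ gives the bound. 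For the time term, the equation gives $\partial_t f = \int_0^1 \xi(p(s))\,\d s$ with $p=\nabla_q f(t_\lambda,q_\lambda;x)$. Since $|p(s)|\leq 1$ for a.e.\ $s$, this is bounded in absolute value by $\sup_{|a|\leq1}|\xi(a)|$. Integrating over $\lambda\in[0,1]$ then yields the stated estimate.

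For the extension, the estimate shows that $f$ is uniformly continuous on $[0,t_\star)\times\mcl Q_\infty$ for the metric $|t-t'|+|q-q'|_{L^1}$. Every $q\in\mcl Q_1$ is an $L^1$-limit of elements of $\mcl Q_\infty$: truncate $q$ by composing with a map that caps it at level $n$ in an increasing manner, or simply take $q(\min(s,1-1/n))$, which is bounded, increasing, càdlàg, and converges to $q$ in $L^1$. So $\mcl Q_\infty$ is $L^1$-dense in $\mcl Q_1$, and $f$ extends uniquely by continuity to $[0,t_\star)\times\mcl Q_1$ with the same Lipschitz bound.

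The main point needing care is justifying the chain rule along the segment, since Fréchet differentiability was established with respect to the $L^\infty$ topology on $\mcl Q_\infty$. The segment $\lambda\mapsto q_\lambda$ is affine, hence differentiable in $L^\infty$, so the composition is differentiable in $\lambda$. The derivative is continuous in $\lambda$ by the Lipschitz continuity of $Z$ and of $\nabla_q\psi$, which justifies the fundamental theorem of calculus. Everything else is immediate from \eqref{e.p.psi_smooth_1}.
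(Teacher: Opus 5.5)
Your proposal is correct and follows essentially the same route as the paper: integrate along the segment, bound $\nabla_q f$ in $L^\infty$ by $1$ using $\nabla_q f = \nabla_q\psi(Z(t,q;x);x)$ together with \eqref{e.p.psi_smooth_1}, and bound $\partial_t f=\int_0^1\xi(\nabla_q f)$ by $\sup_{|a|\leq 1}|\xi(a)|$. You also supply two details the paper leaves implicit, both correctly: the justification of the chain rule along the segment, and the $L^1$-density of $\mcl Q_\infty$ in $\mcl Q_1$ via $q(\min(s,1-1/n))$, which is what the extension claim needs.
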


\begin{proof}
We write $f(\scdot,\scdot)=f(\scdot,\scdot;x)$, $\psi(\scdot)=\psi(\scdot;x)$, and $L^\sfr=L^\sfr([0,1);\S^D)$ for brevity. According to Proposition~\ref{p.f=UZ}, $f$ is Fréchet differentiable on $[0,t_\star) \times \mcl Q_\infty$ and we have 
\begin{align*}
    \nabla_q f(t,q) = \nabla_q \psi(q+t \nabla f(t,q))\quad \text{and}\quad
    \partial_t f(t,q) = \int \xi(\nabla_q f(t,q)).
\end{align*}
The first line together with~\eqref{e.p.psi_smooth_1} ensures that $| \nabla_q f(t,q) |_{L^\infty} \leq 1$. Plugging this in the second line yields $|\partial_t f(t,q)| \leq \sup_{|a| \leq 1} |\xi(a)| $. Thanks to this, we have 
\begin{align*}
    &|f(t,q) - f(t',q')| = \left|\int_0^1 \frac{\d}{\d s} f( s(t,q) + (1-s)(t',q')) \d s \right| \\
                        &= \Big|\int_0^1  \langle \nabla  f( s(t,q) + (1-s)(t',q')), q-q'\rangle_{L^2} \\
                        & \quad \quad \quad \quad \quad \quad + \partial_t f( s(t,q) + (1-s)(t',q'))(t-t') \d s \Big| \\
                        &\leq   | q-q' |_{L^1} \sup_s | \nabla  f( s(t,q) + (1-s)(t',q')) |_{L^\infty} +  |t-t'|  \sup_{|a| \leq 1} |\xi(a)|  \\
                        &\leq   | q-q' |_{L^1} +  |t-t'| \sup_{|a| \leq 1} |\xi(a)|.
\end{align*}
\end{proof}

\section{Proof of the main results} \label{s.proof_main_results}


\subsection{Preliminaries}

From~\cite[Proposition~3.1]{chenmourrat2023cavity}, we recall that $\bar F_N^{m \mapsto x \cdot m}$ defined in~\eqref{e.F_N(t,q;s,x)=} is Lipschitz in $(t,q)$ uniformly in $(N,x)$. More precisely, for every $t,t' \in \R_+$ and $q,q' \in \mcl Q_1$, we have 
\begin{e}\label{e.F_NLip}
    \Ll|\bar F_N^{m \mapsto x \cdot m}(t,q) - \bar F_N^{m \mapsto x \cdot m}(t',q')\Rr| \leq \Ll|q-q'\Rr|_{L^1} + \Ll|t-t'\Rr| \sup_{|a| \leq 1} |\xi(a)|.
\end{e}
According to the Arzelà--Ascoli theorem, the sequence $(\bar F_N^{m \mapsto x \cdot m})_{N \in \N}$ admits subsequential limits in the topology of local uniform convergence on $\R_+\times \mcl Q_\sfr$ for any $\sfr \in[1,\infty]$. 


Recall the definition of $\msc P_{t,q}$ from~\eqref{e.P_t,q(p;x)=}. The following proposition is partly inspired by~\cite[Proposition~1.3]{dominguez2024critical}. 

\begin{proposition}\label{p.id_limF}
Let $x\in\R^d$. Then, for every $(t,q)\in[0,t_\star)\times\mcl Q_2$, we have
\begin{align*}
    \lim_{N\to\infty} \bar F^{m \mapsto x \cdot m}_N(t,q) = \msc P_{t,q}(p;x),
\end{align*}
where $p$ is the unique solution to $p=\nabla_q\psi(q+t\nabla \xi(p);x)$ given by Lemma~\ref{l.unique crit point}.
\end{proposition}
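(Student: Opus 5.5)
The plan is an interpolation along the characteristic line through $(t,q)$, combined with an overlap concentration argument, closed by the contraction property defining $t_\star$. Fix $x$ and write $F_N=\bar F_N^{m\mapsto x\cdot m}$. Let $p$ be the solution from Lemma~\ref{l.unique crit point}, and set $q_s=q+(1-s)t\nabla\xi(p)$ for $s\in[0,1]$. Then $q_s+st\nabla\xi(p)=q+t\nabla\xi(p)$ for all $s$, so $p$ is also the fixed point attached to $(st,q_s)$. Consider $\varphi_N(s)=F_N(st,q_s)$. At $s=0$ we have $\varphi_N(0)=F_N(0,q+t\nabla\xi(p))=\psi(q+t\nabla\xi(p);x)$ exactly, using $\bar F_N(0,\cdot)=\bar F_1(0,\cdot)$.

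\textbf{Step 1: the interpolation identity.} Using the standard Gaussian integration by parts formulas $\partial_t F_N=\E\langle\xi(R^{1,2})\rangle$ and $\langle \kappa,\nabla_qF_N\rangle=\E\langle \kappa(\alpha^1\wedge\alpha^2)\cdot R^{1,2}\rangle$, up to $O(1/N)$ self-overlap corrections that the drift terms cancel, I would compute
\begin{equation*}
\varphi_N'(s)=t\,\E\big\langle \xi(R^{1,2})-\nabla\xi(p(\alpha^1\wedge\alpha^2))\cdot R^{1,2}\big\rangle_s .
\end{equation*}
Here $R^{1,2}=\sigma^1(\sigma^2)^\intercal/N$, and $\langle\cdot\rangle_s$ is the Gibbs measure at $(st,q_s)$. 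Adding and subtracting the linearization at $p$ and integrating over $s\in[0,1]$ gives
\begin{equation*}
F_N(t,q)=\msc P_{t,q}(p;x)+t\int_0^1\E\big\langle \xi(R^{1,2})-\xi(p(u))-\nabla\xi(p(u))\cdot(R^{1,2}-p(u))\big\rangle_s\,\d s+o(1),
\end{equation*}
with $u=\alpha^1\wedge\alpha^2$. The error term is bounded by $C\,\E\langle|R^{1,2}-p(\alpha^1\wedge\alpha^2)|^2\rangle_s$, since $\xi$ is smooth on $B$. So it suffices to prove that
\begin{equation*}
\int_0^1\E\langle|R^{1,2}-p(\alpha^1\wedge\alpha^2)|^2\rangle_s\,\d s\to0 .
\end{equation*}

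\textbf{Step 2: overlap concentration (main obstacle).} I would first perturb $q$ to $q+\eps\kappa$ with small increasing $\kappa$, and possibly add a standard small perturbation Hamiltonian. By the Lipschitz bound \eqref{e.F_NLip} this does not affect limits as $\eps\to0$. Along a subsequence, $F_N\to\bar F$, and the usual Ghirlanda--Guerra and synchronization arguments (as in \cite{chenmourrat2023cavity}) should give the following: for a.e.\ perturbation, the overlap $R^{1,2}$ is asymptotically a deterministic function $\tilde p_s(\alpha^1\wedge\alpha^2)$ of the cascade overlap. The hard part is identifying $\tilde p_s$ without already knowing that $\bar F$ is differentiable.

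\textbf{Step 3: identifying $\tilde p_s$.} I would use the cavity computation of \cite{chenmourrat2023cavity}. Adding one spin and using synchronization, it should yield the self-consistency relation
\begin{equation*}
\tilde p_s=\nabla_q\psi\big(q_s+st\nabla\xi(\tilde p_s);x\big).
\end{equation*}
Its validity is the point I am least sure of; I would check it first in the replica-symmetric case $q=\hat 0$. Granting it, the map $p'\mapsto\nabla_q\psi(q_s+st\nabla\xi(p');x)$ is a contraction with constant $st/t_\star<1$ by \eqref{e.pf.l.unique crit point}. Hence $\tilde p_s=p$, which is exactly why the characteristic through $(t,q)$ is chosen. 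Thus the error term in Step 1 vanishes along every subsequence, and removing the perturbation gives $\lim F_N(t,q)=\msc P_{t,q}(p;x)$.

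For $q\in\mcl Q_2\setminus\mcl Q_\infty$, I would approximate $q$ by paths in $\mcl Q_\infty$. Then I would pass to the limit using \eqref{e.F_NLip} on the left and, on the right, the continuity of $\msc P_{t,q}(p;x)$ in $q$ (via Lemma~\ref{l.cty_f_weak} together with Proposition~\ref{p.f=UZ}).
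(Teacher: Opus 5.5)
Your Step~1 is correct, and the identity is actually exact. With the correction terms, $\partial_t\bar F_N=\E\langle\xi(R^{1,2})\rangle$ and $\langle\kappa,\nabla_q\bar F_N\rangle_{L^2}=\E\langle\kappa(\alpha^1\wedge\alpha^2)\cdot R^{1,2}\rangle$ hold with no error term. Since $\alpha^1\wedge\alpha^2$ is uniform under $\E\langle\cdot\rangle_s$, you get $\bar F_N(t,q)=\msc P_{t,q}(p;x)+t\int_0^1\E\langle \xi(R^{1,2})-\xi(p(u))-\nabla\xi(p(u))\cdot(R^{1,2}-p(u))\rangle_s\,\d s$. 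Everything therefore hinges on proving $\int_0^1\E\langle|R^{1,2}-p(\alpha^1\wedge\alpha^2)|^2\rangle_s\,\d s\to0$.

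This is where the genuine gap is: Steps~2--3 assert this convergence rather than prove it. The inputs you would take from \cite{chenmourrat2023cavity} are the overlap structure and the fixed point relation for the limiting profile. As they are used in the paper, these are available only at points where a subsequential limit $f$ is Gateaux differentiable, with the profile equal to $\nabla_q f$ there. Such points are only known to form a dense set. Your identity needs concentration at almost every point of one specific segment $\{(st,q_s)\}$, or of a perturbed copy, and density says nothing about that. A one-directional perturbation $q+\eps\kappa$ would at best control the single quantity $\E\langle R^{1,2}_\kappa\rangle$ for a.e.\ $\eps$. It does not control the full conditional profile of $R^{1,2}$ given $\alpha^1\wedge\alpha^2$. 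Deriving $\tilde p_s=\nabla_q\psi(q_s+st\nabla\xi(\tilde p_s);x)$ at a point where the limit is not known to be differentiable is exactly the missing step. To close the gap along your route you would need an independent high-temperature concentration estimate for $t<t_\star$, for instance a Lata{\l}a-type argument adapted to the cascade. That is a substantial piece of work in its own right.

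The paper sidesteps overlap concentration entirely. It takes a subsequential limit $f$ and uses \cite{chenmourrat2023cavity} only on the dense set $G$ of differentiability points. There it gets $\partial_t f=\int_0^1\xi(p')$ with $p'$ solving the fixed point equation, and Lemma~\ref{l.unique crit point} identifies $p'=p^{t,q}$. It then propagates this to every $(t,q)$ using three ingredients:
\begin{itemize}
\item the uniform-in-$N$ semi-concavity of $t\mapsto\bar F_N(t,q)$ (Lemma~\ref{l.semi-concave_in_t});
\item the Lipschitz bound \eqref{e.F_NLip};
\item the continuity of $(t,q)\mapsto p^{t,q}$.
\end{itemize}
Together these bound the difference quotients of $f$ at $(t,q)$ above and below, up to $O(s)$, by $t$-derivatives at nearby points of $G$. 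This gives $\partial_t f(t,q)=\int_0^1\xi(p^{t,q})$ everywhere on $(0,t_\star)$, and integrating in $t$ determines $f$ independently of the subsequence. The one-sided control from semi-concavity in $t$ is what replaces the a priori differentiability, or concentration, that your argument needs.
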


To prove this proposition, we verify that any subsequential limit of the sequence $(\bar F^{m \mapsto x \cdot m}_N(\cdot,q))_N$ is differentiable on $(0,t_\star)$ and that its derivative can be uniquely characterized. This will allow us to show that the sequence $(\bar F^{m \mapsto x \cdot m}_N(\cdot,q))_N$ converges and to identify its limit.
To proceed, we will need the following lemma, which states that the free energy $\bar F^{m \mapsto x \cdot m}_N(\cdot,q)$ is locally semi-concave in $t\in(0,+\infty)$, uniformly in $N$, $q$, and $x$.

\begin{lemma}\label{l.semi-concave_in_t}
For every $c>0$ there is a constant $C>0$ such that for every $N\in\N$, every $x\in\R^d$ and every $q\in\mcl Q_1$ we have for all $t,t' \in [c,+\infty)$ and $\lambda\in[0,1]$, 
\begin{align*}
    (1-\lambda)\bar F_N(t)+ \lambda\bar F_N(t')- \bar F_N((1-\lambda)t+\lambda t'),
    \leq C\lambda(1-\lambda)(t-t')^2
\end{align*}
where we used the shorthand $\bar F_N(\cdot) = \bar F^{m \mapsto x \cdot m}_N(\cdot,q)$.
\end{lemma}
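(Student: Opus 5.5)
It suffices to show that $t\mapsto \bar F_N(t)-Ct^2$ is concave on $[c,\infty)$. Equivalently, $\frac{\d^2}{\d t^2}\bar F_N(t)\le 2C$ there, uniformly in $N$, $x$ and $q$. By the Lipschitz continuity in $q$ from~\eqref{e.F_NLip}, I may take $q\in\mcl Q_\infty$ and then extend to $\mcl Q_1$ by density, since the inequality passes to limits.

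\emph{Step 1: first derivative.} Differentiating $-\frac1N\log\iint\exp(H)$ in $t$ and applying Gaussian integration by parts to the term $\frac{1}{\sqrt{2t}}H_N(\sigma)$ gives
\begin{align*}
\partial_t\bar F_N(t)=-\tfrac{1}{N}\E\Big\langle \tfrac{H_N(\sigma)}{\sqrt{2t}}-N\xi\big(\tfrac{\sigma\sigma^\intercal}{N}\big)\Big\rangle=\E\Big\langle\xi\big(\tfrac{\sigma^1(\sigma^2)^\intercal}{N}\big)\Big\rangle,
\end{align*}
where the self-overlap terms cancel because of the correction $-Nt\xi$. Here $\langle\cdot\rangle=\langle\cdot\rangle_{N;t,q}^{m\mapsto x\cdot m}$. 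The Mattis term $N x\cdot m_N$ and the cascade field do not depend on $t$, so they play no role in the differentiation.

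\emph{Step 2: second derivative.} Differentiating again produces
\begin{align*}
\frac{1}{\sqrt{2t}}\cdot\frac1{N}\cdot N\,\E\Big\langle \xi(R^{12})\big(\hat H(\sigma^1)+\hat H(\sigma^2)-2\hat H(\sigma^3)\big)\Big\rangle\cdot\sqrt{\cdot},
\end{align*}
where $\hat H$ denotes the Hamiltonian with the drift removed. Performing Gaussian integration by parts once more turns this into a finite combination of terms $N\,\E\langle\xi(R^{12})\,\xi(R^{ab})\rangle$, with total coefficient zero, minus the self-overlap terms. Written in centered form, this is $N\big(\E\langle \xi(R^{12})\xi(R^{12})\rangle-\dots\big)$, which is of order $N$ times a variance-type quantity. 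A direct pointwise bound therefore fails, and this is the main obstacle.

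\emph{Step 3: avoiding the $N$ factor.} Rather than bounding the second derivative directly, I will use the standard trick of writing $t=s^2/2$, so that $\sqrt{2t}H_N = sH_N$. The map
\[
s\mapsto -\tfrac1N\log\iint \exp\Big(sH_N - N\tfrac{s^2}{2}\xi(\cdot)+\dots\Big)
\]
is the negative of a log-partition function in the variable $s$, plus the deterministic term $\frac{s^2}{2N}\cdot N\xi$ inside the Gibbs weight. Concretely, $-\frac1N\log Z(s)$ with
\[
Z(s)=\iint e^{sH_N(\sigma)-\frac{s^2}{2}N\xi(\sigma\sigma^\intercal/N)+R(\sigma,\alpha)}.
\]
One computes
\[
\frac{\d^2}{\d s^2}\Big(-\tfrac1N\log Z\Big)=-\tfrac1N\mathrm{Var}\Big(H_N-sN\xi\Big)+\big\langle\xi(\tfrac{\sigma\sigma^\intercal}{N})\big\rangle\le \sup_{|a|\le1}|\xi(a)|.
\]
The variance term has the right sign, so this step works pathwise, before taking $\E$. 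Hence $u(s):=\bar F_N(s^2/2)$ satisfies $u''\le K:=\sup_B|\xi|$.

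\emph{Step 4: back to $t$.} With $s=\sqrt{2t}$ we have
\[
\frac{\d^2}{\d t^2}\bar F_N=u''(s)\,(s')^2+u'(s)\,s'',
\]
where $s'=(2t)^{-1/2}$ and $s''=-(2t)^{-3/2}$. From Step 1, $u'(s)=s\,\partial_t\bar F_N$ is bounded by $sK$. Therefore
\[
\frac{\d^2}{\d t^2}\bar F_N\le \frac{K}{2t}+\frac{K\sqrt{2t}}{(2t)^{3/2}}\le \frac{K}{c}
\]
on $[c,\infty)$. Taking $C=K/(2c)$, and using that a function with second derivative at most $2C$ satisfies the stated inequality (by Taylor's formula with integral remainder), completes the proof. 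The bound is uniform in $N$, $x$ and $q$ because $K$ depends only on $\xi$.
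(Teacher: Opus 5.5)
Your proof is correct and follows essentially the same route as the paper: you write $t$ as a square of a new variable $s$, show the free energy is semi-concave in $s$ uniformly in $N,x,q$, and transfer this back to $t\in[c,\infty)$ by the chain rule using $|\partial_t\bar F_N|\le\sup_B|\xi|$. The only difference is how you get the bound in $s$. You use the pathwise identity $\frac{\d^2}{\d s^2}\big(-\frac1N\log Z\big)=-\frac1N\mathrm{Var}(\cdot)+\la\xi(\sigma\sigma^\intercal/N)\ra$, whereas the paper applies H\"older's inequality to $(1-\lambda)H_s+\lambda H_{s'}=H_{(1-\lambda)s+\lambda s'}-\lambda(1-\lambda)(s-s')^2N\xi(\sigma\sigma^\intercal/N)$, which is the finite-difference version of your variance computation.
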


\begin{proof}
Recall the Hamiltonian $H^G_{N;t,q}(\sigma,\alpha)$ from~\eqref{e.H^t,q;s,x_N(sigma,alpha,chi)=}. For each $s\geq 0$, we set $H_s(\sigma,\alpha) = H^{m \mapsto x \cdot m}_{N;s^2,q}(\sigma,\alpha)$ and define
\begin{align*}
    G(s) = - \frac{1}{N} \E\log \iint \exp\Ll(H_s(\sigma,\alpha) \Rr) \d P_1^{\otimes N}(\sigma) \d \mfk R(\alpha).
\end{align*}
In view of~\eqref{e.F_N(t,q;s,x)=}, we have $\bar F_N(t) = G(\sqrt{t})$. This implies,
\begin{e*}
   \bar F_N''(t) = \frac{G''(\sqrt{t})}{2 \sqrt{t}} - \frac{G'(\sqrt{t})}{t^\frac{3}{2}}.
\end{e*}
The local semi-concavity of $\bar F_N$ can be reformulated as $\bar F_N''$ being upper-bounded on $[c,+\infty)$. The function $G'$ is bounded, this can be easily shown using Gaussian integration by parts. Hence, it suffices to show that the $G''$ is upper-bounded. 

Let $R=\lambda (1-\lambda)(s-s')^2 N\xi(\sigma\sigma^\intercal /N)$. Using the definition of $G$, we have
\begin{align*}
    &(1-\lambda)G(s) + \lambda G(s') = -\frac{1}{N} \E\log \Ll(\cdots\Rr)^{1-\lambda}(\cdots)^\lambda
    \\
    &\stackrel{\text{Jensen}}{\leq}-\frac{1}{N}\E\log \iint e^{(1-\lambda) H_s(\sigma,\alpha) + \lambda H_{s'}(\sigma,\alpha)}  \d P_1^{\otimes N}(\sigma) \d \mfk R(\alpha)
    \\
    &= -\frac{1}{N}\E\log \iint e^{H_{(1-\lambda)s + \lambda s'}(\sigma,\alpha)  - R}  \d P_1^{\otimes N}(\sigma) \d \mfk R(\alpha).
\end{align*}
Setting $M= \max_{a\in\S^D_+:\:|a|\leq 1}|\xi(a)|$, we have $R \leq \lambda(1-\lambda)(s-s')^2 N M$ and thus
\begin{align*}
    (1-\lambda)G(s) + \lambda G(s') \leq G((1-\lambda)s + \lambda s') + \lambda(1-\lambda)(s-s')^2M.
\end{align*}
Setting $\lambda = \frac{1}{2}$ and $s'=s+\eps$ and sending $\eps\to0$ in this display, we get $G''(s)\leq 2M$, completing the proof as explained before.
\end{proof}

\begin{proof}[Proof of Proposition~\ref{p.id_limF}]
Since $x$ is fixed, we omit it from the notation and we make the dependence of the fixed point $p$ in $(t,q)$ explicit by writing it $p^{t,q}$. 

The Arzelà--Ascoli theorem gives a subsequence along which the free energy converges to some limit $f:\R_+\times \mcl Q_2\to\R$. We recall from~\cite[Proposition~5.3]{chenmourrat2023cavity} that $f$ is Gateaux differentiable jointly in $t$ and $q$ on a dense subset of $\R_+\times \mcl Q_2$ (in the $L^2$-topology). We denote this set of Gateaux differentiable point as $G$. For each $(t,q)\in G$, \cite[Proposition~7.2]{chenmourrat2023cavity} gives $\partial_t f(t,q) = \int_0^1\xi(p'^{t,q})$ for some $p'^{t,q}\in\mcl Q_{\infty,\leq 1}$ that satisfies the fixed point equation $p'=\nabla_q\psi(q+t\nabla(p'))$. Lemma~\ref{l.unique crit point} implies $p'^{t,q}=p^{t,q}$ whenever $t<t_\star$ and thus
\begin{align*}
    \partial_t f(t,q) = \int_0^1\xi(p^{t,q}),\quad\forall (t,q)\in G ,\  t<t_\star.
\end{align*}

Now, fix any $(t,q)\in (0,t_\star)\times\mcl Q_2$. For $s$ sufficiently small, there is $(t_s,q_s)\in G$ satisfying $|t-t_s|+|q-q_s|_{L^2}\leq s^2$. From this and the Lipschitz continuity of $f$ inherited from~\eqref{e.F_NLip}, we have that there exists some absolute constants $C_1, C_2>0$ such that for sufficiently small $s$,
\begin{align*}
    s^{-1}\Ll(f(t+s,q) - f(t,q)\Rr) &\leq s^{-1}  \Ll(f(t_s+s,q_s) - f(t_s,q_s)\Rr) + C_1s
    \\
    &\leq \int_0^1\xi(p^{t_s,q_s}) + C_2s,
\end{align*}
where we used the local semi-concavity of $f$ inherited from that enjoyed by the free energy as given in Lemma~\ref{l.semi-concave_in_t} below.
For the converse bound, we choose $(t'_s,q'_s)$ satisfying $(t'_s+s,q'_s)\in G$ and $|t-t'_s|+|q-q'_s|_{L^2}\leq s^2$. By similar arguments, we get that there exists some absolute constants $C_3, C_4>0$, such that for sufficiently small $s$,
\begin{align*}
    s^{-1}\Ll(f(t+s,q) - f(t,q)\Rr) &\geq s^{-1}  \Ll(f(t'_s+s,q'_s) - f(t'_s,q'_s)\Rr) - C_3s
    \\
    &\geq \int_0^1\xi(p^{t'_s+s,q'_s})- C_4s.
\end{align*}
By a similar argument as in~\eqref{e.t_star_used_here_2}, we can see that the map $[0,t_\star)\times \mcl Q_2\ni (t,q)\mapsto p^{t,q}$ is continuous in $L^\infty$. Therefore, sending $s\to0$ in the above two displays, we obtain the differentiability of $f(\cdot,q)$ and
\begin{align*}
    \partial_t f(t,q) = \int_0^1\xi(p^{t,q}),\quad\forall (t,q)\in (0,t_\star)\times\mcl Q_2.
\end{align*}
Hence, for every $(t,q)\in (0,t_\star)\times\mcl Q_2$, we conclude
\begin{align*}
    f(t,q) =\psi(q)+ \int_0^t\Big(\int_0^1\xi(p^{t,q})\Big) \d t.
\end{align*}
This uniquely characterizes $f$,  and proves that it is independent of the chosen subsequence. Therefore, we conclude that $\bar F_N$ converges at every $(t,q)\in (0,t_\star)\times\mcl Q_2$. By~\cite[Proposition~7.3]{chenmourrat2023cavity}, the limit at $(t,q)$ is equal to $\msc P_{t,q}(p;x)$ for some $p\in\mcl Q_{\infty,\leq 1}$ satisfying the fixed point equation $p=\nabla_q\psi(q+t\nabla(p');x)$. Again, Lemma~\ref{l.unique crit point} ensures that $p=p_{q,t}$, completing the proof.
\end{proof}

\begin{corollary}\label{c.limF_N=f}
Fix any $x\in\R^d$. Then, for every $(t,q) \in [0,t_\star)\times \mcl Q_\infty$, we have
\begin{align}
    \lim_{N\to\infty} \bar F^{m \mapsto x \cdot m}_N(t,q) = f(t,q;x),
\end{align}
where $f(\scdot,\scdot;x)$ is the function constructed in Proposition~\ref{p.f=UZ}.
\end{corollary}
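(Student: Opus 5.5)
The plan is to combine Proposition~\ref{p.id_limF} with the characterization of $f(\scdot,\scdot;x)$ in Proposition~\ref{p.f=UZ}, using Lemma~\ref{l.unique crit point} to identify the two fixed points.

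Fix $x\in\R^d$ and $(t,q)\in[0,t_\star)\times\mcl Q_\infty$. Since $\mcl Q_\infty\subset\mcl Q_2$, Proposition~\ref{p.id_limF} applies and gives
\begin{equation*}
\lim_{N\to\infty}\bar F_N^{m\mapsto x\cdot m}(t,q)=\msc P_{t,q}(p;x),
\end{equation*}
where $p\in\mcl Q_\infty$ is the unique solution of $p=\nabla_q\psi(q+t\nabla\xi(p);x)$ provided by Lemma~\ref{l.unique crit point}. (At $t=0$ one can argue directly instead: $\bar F_N(0,q)=\psi(q;x)$ for every $N$, and $f(0,q;x)=\psi(q;x)$ by the initial condition in Proposition~\ref{p.f=UZ}.)

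Next, set $p':=\nabla_q f(t,q;x)$. By~\eqref{e.fixed_pt_for_f}, $p'$ satisfies the same fixed point equation, and by~\eqref{e.fixed_pt_for_f_2} we have $f(t,q;x)=\msc P_{t,q}(p';x)$. We also need $p'\in\mcl Q_\infty$. This follows from~\eqref{e.nabla_u=}, which gives $p'=\nabla_q\psi(Z(t,q;x);x)$, together with~\eqref{e.p.psi_smooth_1}, which shows this path lies in $\mcl Q_\infty$ with $L^\infty$-norm at most $1$.

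Uniqueness in Lemma~\ref{l.unique crit point} then forces $p'=p$. Hence $f(t,q;x)=\msc P_{t,q}(p;x)$, which equals the limit above, and this proves the corollary.

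There is no real obstacle here. The only point requiring care is that the two fixed points lie in the same space $\mcl Q_\infty$ where uniqueness holds, and~\eqref{e.p.psi_smooth_1} provides exactly that.
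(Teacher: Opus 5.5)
Your proposal is correct and follows the paper's argument, which likewise combines Proposition~\ref{p.id_limF} with Proposition~\ref{p.f=UZ}. You make explicit what the paper leaves implicit: the two fixed points coincide by the uniqueness in Lemma~\ref{l.unique crit point}, once~\eqref{e.nabla_u=} and~\eqref{e.p.psi_smooth_1} show that $\nabla_q f(t,q;x)$ lies in $\mcl Q_\infty$.
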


\begin{proof}
This follows from Proposition~\ref{p.id_limF} and Proposition~\ref{p.f=UZ}.
\end{proof}

\begin{lemma}\label{l.varphi}
Fix any $(t,q)\in[0,t_\star)\times \mcl Q_\infty$ the function $x \mapsto f(t,q;x)$ is Lipschitz, concave, and continuously differentiable. 
\end{lemma}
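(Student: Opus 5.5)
Lipschitz continuity and concavity will come from the finite-$N$ free energies through Corollary~\ref{c.limF_N=f}; differentiability will come from the critical point representation of Proposition~\ref{p.f=UZ} together with the $x$-regularity of $\psi$ in Proposition~\ref{p.psi_smooth}~\eqref{i.p.psi_smooth(3)}.

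\emph{Lipschitz and concave.} By Corollary~\ref{c.limF_N=f}, $f(t,q;x)=\lim_N \bar F_N^{m\mapsto x\cdot m}(t,q)$ for each $x$. For fixed $N$, the map $x\mapsto -\bar F_N^{m\mapsto x\cdot m}(t,q)$ is $\frac1N\E\log$ of an integral of $\exp(\text{affine in }x)$. It is therefore convex by H\"older's inequality. Its gradient is $\E\la m_N\ra$, so it is $|h|_{L^\infty}$-Lipschitz. Both properties pass to the pointwise limit, so $f(t,q;\scdot)$ is concave and $|h|_{L^\infty}$-Lipschitz.

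\emph{Differentiability.} Write $p^x$ for the unique fixed point of Lemma~\ref{l.unique crit point}. First I show that $x\mapsto p^x$ is Lipschitz in $L^\infty$. Take the difference of the two fixed point equations and apply \eqref{e.p.psi_smooth_2} with $\sfr=\infty$, together with the contraction estimate \eqref{e.pf.l.unique crit point}. This gives
\[
|p^x-p^{x'}|_{L^\infty}\leq \tfrac{t}{t_\star}|p^x-p^{x'}|_{L^\infty}+4|h|_{L^\infty}|x-x'|,
\]
hence $|p^x-p^{x'}|_{L^\infty}\leq C|x-x'|$ because $t<t_\star$.

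Next I use \eqref{e.fixed_pt_for_f_2}, namely $f(t,q;x)=\msc P_{t,q}(p^x;x)$. Split the increment as
\[
f(t,q;x')-f(t,q;x) = \big[\msc P_{t,q}(p^{x'};x')-\msc P_{t,q}(p^{x};x')\big]+\big[\msc P_{t,q}(p^{x};x')-\msc P_{t,q}(p^x;x)\big].
\]
For the first bracket, $p^{x'}$ is the critical point for the parameter $x'$, and both $p^x,p^{x'}$ lie in $\mcl Q_{\infty,\leq1}$ by \eqref{e.p.psi_smooth_1}. Lemma~\ref{l.sP(p^x)_critical} at $x'$ (its constant is uniform in $x$) bounds this bracket by $C|p^x-p^{x'}|^2_{L^\infty}=O(|x-x'|^2)$. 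The second bracket equals $\psi(q+t\nabla\xi(p^x);x')-\psi(q+t\nabla\xi(p^x);x)$. By \eqref{e.smooth(3).1.5} it equals $(x'-x)\cdot\nabla_x\psi(q+t\nabla\xi(p^x);x)+O(|x-x'|^2)$.

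Therefore $f(t,q;\scdot)$ is differentiable with
\[
\nabla_x f(t,q;x)=\nabla_x\psi\big(q+t\nabla\xi(p^x);x\big).
\]
Continuity of this gradient follows from \eqref{e.smooth(3).2} combined with the Lipschitz continuity of $x\mapsto p^x$ and the local Lipschitz continuity of $\nabla\xi$ on $B$.

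\emph{Main obstacle.} The delicate points are the applications of Lemma~\ref{l.sP(p^x)_critical}. Its hypothesis requires $q\in\mcl Q_2$ and paths in $\mcl Q_{\infty,\leq1}$; both hold here. One must also check that its constant does not depend on $x$, which holds since the proof only uses \eqref{e.frechet_bound} and $\xi$. Finally, $q+t\nabla\xi(p^x)$ must lie in $\mcl Q_1$ so that \eqref{e.smooth(3).1.5} and \eqref{e.smooth(3).2} apply. This holds since it is a bounded increasing path by the definition of the fixed point map in Lemma~\ref{l.unique crit point}.
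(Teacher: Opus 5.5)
Your proof is correct and follows the paper's argument. Lipschitzness and concavity come from the finite-$N$ free energies via Corollary~\ref{c.limF_N=f}, Lipschitzness of $x\mapsto p^x$ comes from the contraction estimate, and differentiability comes from Lemma~\ref{l.sP(p^x)_critical} combined with \eqref{e.smooth(3).1.5}. The differences are minor: you telescope through $\msc P_{t,q}(p^x;x')$ and apply Lemma~\ref{l.sP(p^x)_critical} at $x'$, which is legitimate since its constant is uniform in $x$ and gives an $O(|x-x'|^2)$ remainder directly, and you obtain continuity of the gradient from the explicit formula and \eqref{e.smooth(3).2} rather than from concavity as the paper does.
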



\begin{proof}
Since we have $f(t,q;x) = \lim_{N \to +\infty} \bar F^{m \mapsto x \cdot m}_N(t,q)$ for every $x\in\R^d$ due to Corollary~\ref{c.limF_N=f}, the concavity and Lipschitzness of $f(t,q;\cdot)$ follows from that of $x \mapsto \bar{F}^{m \mapsto x \cdot m}_N(t,q)$, which are evident from its definition in~\eqref{e.F_N(t,q;s,x)=} and the boundedness of $m_N$ (see~\eqref{e.m_N=}). 
It remains to prove the differentiability. Once this is done, the continuity of the derivative follows directly since $f(t,q;\cdot)$ is concave.
Recall $\msc P_{t,q}$ defined in~\eqref{e.P_t,q(p;x)=}. Since $t$ and $q$ are fixed, we write $\msc P(\scdot;x)= \msc P_{t,q}(\scdot;x)$ for brevity.
By Proposition~\ref{p.f=UZ}, $\nabla_q f(t,q;x)$ exists for every $x\in \R^d$ and we denote it by $p^x = \nabla_q f(t,q;x)$. 
By~\eqref{e.p.psi_smooth_1}, we have $p^x\in\mcl Q_{\infty,\leq1}$ defined in~\eqref{e.Q_infty,<1}.
By~\eqref{e.fixed_pt_for_f} and~\eqref{e.fixed_pt_for_f_2} from the same proposition, we have
\begin{align*}
    f(t,q;x) = \msc P(p^x;x)\quad\text{and}\quad p^x = \nabla_q \psi\Ll(q+t\nabla\xi (p^x);x \Rr),\quad\forall x\in\R^d.
\end{align*}
Using the latter relation, for $x,x'\in\R^d$ we can compute 
\begin{align}
    |p^x - p^{x'}|_{L^{\infty}}\leq \Ll|\nabla_q \psi\Ll(q+t\nabla\xi (p^x);x \Rr)-\nabla_q \psi\Ll(q+t\nabla\xi (p^{x'});x \Rr)\Rr|_{L^\infty} \notag
    \\
    +  \Ll|\nabla_q \psi\Ll(q+t\nabla\xi (p^{x'});x \Rr)-\nabla_q \psi\Ll(q+t\nabla\xi (p^{x'});x' \Rr)\Rr|_{L^\infty} \notag
    \\
    \stackrel{\eqref{e.t_star=}\eqref{e.p.psi_smooth_2}}{\leq} t t_\star^{-1} |p^x - p^{x'}|_{L^\infty} + 4|h|_{L^\infty}|x-x'|. \label{e.t_star_used_here_2}
\end{align}
Due to $t<t_\star$, we can see that $x\mapsto p^x$ is Lipschitz from $\R^d$ to $\mcl Q_{\infty,\leq 1}$. 
Using~\eqref{e.smooth(3).1.5} from Proposition~\ref{p.psi_smooth}~\eqref{i.p.psi_smooth(3)}, we have
\begin{align*}
    \Ll|\msc P(p^{x'};x')- \msc P(p^{x'};x) - (x'-x)\cdot \nabla_x \psi(q+t\nabla\xi(p^{x'});x)\Rr|\leq |h|^2_{L^\infty}\Ll|x-x'\Rr|^2.
\end{align*}
The Lipschitzness of $x\mapsto p^x$ further implies that as $x'$ tends to $x$, we have
\begin{align*}
    \msc P(p^{x'};x')- \msc P(p^{x'};x) = (x'-x)\cdot \nabla_x \psi(q+t\nabla\xi(p^x);x)+o(|x-x'|).
\end{align*}
Furthermore, using~\eqref{e.l.sP(p^x)_critical} from Lemma~\ref{l.sP(p^x)_critical}, we have
\begin{e*}
   |\msc P(p^{x'};x)- \msc P(p^x;x)| \leq C|p^{x'}-p^x|_{L^\infty}^2.
\end{e*}
Combining the previous two displays; we obtain 
\begin{align*}
    \msc P(p^{x'};x') - \msc P(p^x;x) = \msc P(p^{x'};x') - \msc P(p^{x'};x) + \msc P(p^{x'};x) - \msc P(p^x;x)\\
    = (x'-x)\cdot \nabla_x \psi(q+t\nabla\xi(p^x);x) + o(|x-x'|) + o\Ll(|p^x-p^{x'}|_{L^\infty}\Rr).
\end{align*}
Observe that the combined error is $o(|x-x'|)$ due to the Lipschitzness of $x\mapsto p^x$ proved above. Recall $\msc P(p^x;x) =f(t,q;x)$ and we conclude from the above display that $f(t,q;\cdot)$ is differentiable at every $x$.
Since $f(t,q;\cdot)$ is also concave, then it is a classical result from convex analysis that differentiability implies continuous differentiability.
\end{proof}

\subsection{Limit free energy and the large deviation principle}

Recall the definition of the Gibbs measure $\langle \cdot \rangle_{N;t,q}^G$ from \eqref{e.<>_Ntqsx=} and recall that given a random variable $X : \Omega \to \mfk X$ and $\omega \in \Omega$ we let $X^\omega \in \mfk X$ denote the realization of the random variable $X$ on the event $\omega$. Also, recall the definition of $f^*(t,q;m)$ from~\eqref{e.f^*=}.

\begin{lemma} \label{l.ldp_G=0}
    Let $(t,q) \in [0,t_\star) \times \mcl Q_\infty$. There exists a full-measure subset $\Omega' \subset \Omega$ such that, for every $\omega \in \Omega'$, $m_N$ under $\langle \cdot \rangle_{N;t,q}^{0,\omega}$ satisfies a large deviation principle with rate function $I_{t,q}$ defined by 
    \begin{e}\label{e.I_t,q=f^*-f}
        I_{t,q}(m)  = f^*(t,q;m) - f(t,q;0), \qquad \forall m \, \in \R^d.
    \end{e}
\end{lemma}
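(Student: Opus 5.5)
The plan is to apply the Gärtner--Ellis theorem to $m_N$ under $\langle\cdot\rangle^{0,\omega}_{N;t,q}$. For $x\in\R^d$, define the scaled log-moment generating function
\begin{e*}
    \Lambda_N^\omega(x)=\frac1N\log\big\langle e^{N x\cdot m_N}\big\rangle^{0,\omega}_{N;t,q}.
\end{e*}
By the definitions \eqref{e.F_N(t,q;s,x)=} and \eqref{e.<>_Ntqsx=}, and since $N x\cdot m_N$ is exactly the Mattis term with $G(m)=m\cdot x$, this equals $F_N^{0,\omega}(t,q)-F_N^{m\mapsto x\cdot m,\omega}(t,q)$.

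By Remark~\ref{r.concentration}, each $F_N^G(t,q)$ concentrates around its mean. I would make this quantitative: Gaussian concentration and Efron--Stein (or bounded differences, using that $h$ is bounded) give variance bounds of order $1/N$ or better. Indeed, the bounded differences inequality yields exponential tails $\P(|F_N-\bar F_N|>\eps)\le 2e^{-cN\eps^2}$, which is summable. Borel--Cantelli then gives, for each fixed $x$ in a countable dense set $\mathcal D\subset\R^d$ (say $\Q^d$) together with $x=0$, almost sure convergence $F_N^{m\mapsto x\cdot m}(t,q)\to f(t,q;x)$, using Corollary~\ref{c.limF_N=f}. Intersecting these countably many events produces $\Omega'$.

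To pass from $\mathcal D$ to all $x\in\R^d$, I would use that $x\mapsto F_N^{m\mapsto x\cdot m,\omega}(t,q)$ is $|h|_{L^\infty}$-Lipschitz uniformly in $N$ and $\omega$, because $|m_N|\le|h|_{L^\infty}$. The limit $f(t,q;\cdot)$ is Lipschitz by Lemma~\ref{l.varphi}. Hence on $\Omega'$, for every $x\in\R^d$,
\begin{e*}
    \Lambda_N^\omega(x)\to\Lambda(x):=f(t,q;0)-f(t,q;x).
\end{e*}

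Next I check the Gärtner--Ellis hypotheses. The limit $\Lambda$ is finite everywhere, and by Lemma~\ref{l.varphi} it is convex and differentiable on all of $\R^d$, hence essentially smooth. The theorem then gives an LDP with rate function $\Lambda^*(m)=\sup_x\{x\cdot m-\Lambda(x)\}=\sup_x\{x\cdot m+f(t,q;x)\}-f(t,q;0)$, which is $f^*(t,q;m)-f(t,q;0)=I_{t,q}(m)$ by \eqref{e.f^*=}. Exponential tightness is automatic because $m_N$ takes values in a fixed compact ball.

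The main obstacle is the concentration step. It requires a summable deviation bound so that Borel--Cantelli applies, rather than mere convergence in probability. If bounded differences with respect to $(\chi_i)$ combined with Gaussian concentration conditionally on $(\chi_i)$ and $\fR$ gives only $L^2$ control, I would instead apply Borel--Cantelli along the sequence using exponential tail bounds obtained separately for each source of randomness. The differentiability of $\Lambda$, which is the other potentially delicate input, is already provided by Lemma~\ref{l.varphi}.
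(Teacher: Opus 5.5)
Your proposal matches the paper's proof step for step. Both arguments obtain almost-sure convergence of $F_N^{m\mapsto y\cdot m}(t,q)$ to $f(t,q;y)$ for $y\in\Q^d$ from Corollary~\ref{c.limF_N=f} and concentration, extend it to all $y\in\R^d$ through the uniform Lipschitz bound, and then apply the G\"artner--Ellis theorem using the differentiability given by Lemma~\ref{l.varphi}. Your use of summable tail bounds and Borel--Cantelli only makes explicit what the paper leaves implicit when it cites Remark~\ref{r.concentration}.
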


\begin{proof}
Using Corollary~\ref{c.limF_N=f} and the concentration of free energy as in Remark~\ref{r.concentration}, we can find a full-measure subset $\Omega' \subset \Omega$ such that such that, for every $\omega\in\Omega'$ and every $y\in \Q^d$,
\begin{equation*}
    \lim_{N\to\infty}F^{m \mapsto y \cdot m, \omega}_N(t,q)= f(t,q;y).
\end{equation*}
Henceforth, we fix $\omega\in\Omega'$ and define for each $y\in\R^d$,
\begin{e*}
    \Lambda_{N;t,q}(y)= \frac{1}{N}\log \la e^{Ny\cdot m_N}\ra^{0,\omega}_{N;t,q}.
\end{e*}
Notice that $\Lambda_{N;t,q}(y) = -F_N^{m \mapsto y \cdot m,\omega}(t,q) + F_N^{0,\omega}(t,q)$. Hence, we have  for every $y\in \Q^d$
\begin{e}\label{e.limLambda_N=}
    \lim_{N\to\infty}\Lambda_{N;t,q}(y)= -f(t,q;y)+f(t,q;0).
\end{e}
Since $\Lambda_{N;t,q}$ is Lipschitz uniformly in $N$ and $f(t,q;\cdot)$ is also Lipschitz, the previous display in fact holds for every $y \in \R^d$. Finally, since by Lemma~\ref{l.varphi}, $f(t,q;\cdot)$ is differentiable everywhere, the Gärtner--Ellis theorem (e.g., see~\cite[Theorem~2.3.6]{dembo2009large}) guarantees that $m_N$ satisfies a large deviation principle under $\la\cdot\ra^{0,\omega}_{N;t,q}$ with rate function 
\begin{e*}
    I_{t,q}(m) = \sup_{y \in \R^d} \left\{ y \cdot m - \lim_{N \to +\infty} \Lambda_{N;t,q}(y) \right\} \stackrel{\eqref{e.limLambda_N=}\eqref{e.f^*=}}{=}  f^*(t,q;m) - f(t,q;0).
\end{e*}
\end{proof}

\begin{lemma} \label{l.log<e^G>}
    Let $(t,q) \in [0,t_\star) \times \mcl Q_\infty$. For every continuous function $G : \R^d \to \R$ and every $\omega \in \Omega'$ (as given in Lemma~\ref{l.ldp_G=0}), we have 
    \begin{e}
        \lim_{N \to +\infty} \frac{1}{N} \log \la e^{NG(m_N)} \ra_{N;t,q}^{0,\omega} = \sup_{m \in \R^d} \left\{ G(m) - I_{t,q}(m) \right\}.
    \end{e}
\end{lemma}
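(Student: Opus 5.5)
This is a direct application of Varadhan's lemma to the large deviation principle of Lemma~\ref{l.ldp_G=0}. Fix $\omega\in\Omega'$. By Lemma~\ref{l.ldp_G=0}, $m_N$ satisfies a large deviation principle under $\la\cdot\ra^{0,\omega}_{N;t,q}$ with rate function $I_{t,q}$. Varadhan's lemma (e.g.\ \cite[Theorem~4.3.1]{dembo2009large}) then gives, for every continuous $G$ bounded above along the relevant range, together with a tail condition,
\begin{e*}
    \lim_{N\to\infty}\frac1N\log\la e^{NG(m_N)}\ra^{0,\omega}_{N;t,q}=\sup_{m\in\R^d}\{G(m)-I_{t,q}(m)\}.
\end{e*}
The plan is to verify the hypotheses of Varadhan's lemma and to handle the fact that $G$ is only continuous, not bounded.

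\textbf{Step 1: reduction to bounded $G$ on a compact set.} Since $h$ is bounded on the support of $P_1\otimes\mathrm{Law}(\chi_1)$, we have $m_N\in K:=\{|m|\le |h|_{L^\infty}\}$ almost surely for every $N$. Let $\tilde G$ be a bounded continuous function on $\R^d$ that agrees with $G$ on $K$ (for instance, $G$ composed with the radial projection onto $K$). Then $\la e^{NG(m_N)}\ra=\la e^{N\tilde G(m_N)}\ra$. Varadhan's lemma for bounded continuous functions needs no tail condition, so the limit exists and equals $\sup_m\{\tilde G(m)-I_{t,q}(m)\}$.

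\textbf{Step 2: replace $\tilde G$ by $G$ in the supremum.} It suffices to show that $I_{t,q}=+\infty$ outside $K$; then both suprema are effectively taken over $K$, where $G=\tilde G$. The lower bound of the large deviation principle, applied to the open set $\bar K^c$, gives $-\inf_{\bar K^c}I_{t,q}\le\liminf\frac1N\log\P(m_N\notin K)=-\infty$, so $I_{t,q}=\infty$ on the open set $K^c$.

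Alternatively, this follows directly from the formula $I_{t,q}(m)=f^*(t,q;m)-f(t,q;0)$. Since $f(t,q;y)=\lim_N \bar F^{m\mapsto y\cdot m}_N$ and $\bar F^{m\mapsto y\cdot m}_N\ge\bar F^0_N-\sup_{m\in K}(-y\cdot m)$, we get $f(t,q;y)\ge f(t,q;0)-|h|_{L^\infty}|y|$. Then $\sup_y\{y\cdot m+f(t,q;y)\}=\infty$ when $|m|>|h|_{L^\infty}$.

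\textbf{Main subtlety.} The main point to watch is that the full-measure set $\Omega'$ does not depend on $G$. This holds automatically here, because the argument uses only the large deviation principle on $\Omega'$, which was established once for all $G$. The remaining step is only to check that the rate function is good, which is needed in some versions of Varadhan's lemma. Goodness holds because $f^*$ is lower semicontinuous, being a supremum of affine functions, and the level sets of $I_{t,q}$ lie in the compact set $K$ by Step 2.
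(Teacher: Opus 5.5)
Your proof is correct and follows the paper's route: Varadhan's lemma applied to the large deviation principle of Lemma~\ref{l.ldp_G=0}, using that $m_N$ always lies in the compact ball $K$. The paper is slightly more direct. Since $G(m_N)\le \max_K G$ uniformly in $N$, the tail condition of Varadhan's lemma holds trivially for $G$ itself, so the supremum over all of $\R^d$ comes out at once. Your truncation to $\tilde G$ and Step~2 (showing $I_{t,q}=+\infty$ off $K$) are therefore valid but unnecessary, while your explicit check that $I_{t,q}$ is good makes precise a point the paper leaves implicit in the G\"artner--Ellis theorem.
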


\begin{proof}
Since $m_N$ is bounded uniformly in $N$, so is $G(m_N)$, which allows us to apply Varadhan's lemma (e.g., see~\cite[Theorem~4.3.1]{dembo2009large}) to deduce the desired result from the large deviation principle in Lemma~\ref{l.ldp_G=0}.
\end{proof}

\begin{proof}[Proof of Theorem~\ref{t.free_energy}]
Let $\Omega'$ be given as in Lemma~\ref{l.ldp_G=0}.
Notice that, for every $\omega\in\Omega'$, 
\begin{e*}
    \frac{1}{N}\log\la e^{NG(m_N)}\ra^{0,\omega}_{N;t,q}= -F^{G,\omega}_N(t,q) + F^{0,\omega}_N(t,q).
\end{e*}
Using Lemma~\ref{l.log<e^G>}, \eqref{e.I_t,q=f^*-f}, and $\lim_{N\to\infty} F^{0,\omega}_N(t,q)=f(t,q;y)$ due to Corollary~\ref{c.limF_N=f}, we get that for every $\omega \in \Omega'$
\begin{e*}
    \lim_{N\to\infty} F^{G,\omega}_N = \inf_{m \in \R^d} \left\{ -G(m) + f^*(t,q;m) \right\}.
\end{e*}
Finally, using the definition of $f^*(t,q;m)$ in~\eqref{e.f^*=}, we observe that the right-hand side in the previous display is equal to the right-hand side in~\eqref{e.free_energy}. Then, the desired result follows from this and the concentration of free energy as in Remark~\ref{r.concentration}.
\end{proof}

\begin{proof}[Proof of Theorem~\ref{t.ldp}]
Let $\Omega'$ be given as in Lemma~\ref{l.ldp_G=0}. Fix any continuous function $G: \R^d \to \R$ and recall $I^G_{t,q}$ from~\eqref{e.I^G=}. Let $\Gamma:\R^d\to\R$ be any bounded continuous function. Since we have, 
\begin{e*}
    \frac{1}{N}\log\la e^{N\Gamma(m_N)}\ra^{G,\omega}_{N;t,q} = \frac{1}{N}\log\la e^{N(G+\Gamma)(m_N)}\ra^{0,\omega}_{N;t,q} - \frac{1}{N} \log\la e^{NG(m_N)}\ra^{0,\omega}_{N;t,q}.
\end{e*}
It follows from Lemma~\ref{l.log<e^G>} along with~\eqref{e.I_t,q=f^*-f} that for every $\omega \in \Omega'$,
\begin{align*}
    \lim_{N\to\infty}\frac{1}{N}\log\la e^{N\Gamma(m_N)}\ra^{G,\omega}_{N;t,q} &=\sup\Ll\{G+ \Gamma-I_{t,q} \Rr\} - \sup\Ll\{G-I_{t,q} \Rr\} \\
    & \stackrel{\eqref{e.I^G=}\eqref{e.I_t,q=f^*-f}}{=} \sup \{\Gamma - I^G_{t,q}\}.
\end{align*}
Since this holds for any bounded continuous $\Gamma$, we can apply a variant of Bryc's inverse Varadhan lemma (e.g., see the ``$\Longleftarrow$'' direction in~\cite[Theorem~4.4.13]{dembo2009large}) which guarantees that $m_N$ satisfies a large deviation principle under $\la\cdot\ra^{G,\omega}_{N;t,q}$ with rate function $I^G_{t,q}$.
\end{proof}


\subsection{Replica symmetric form} \label{ss.proj}

Recall from~\eqref{e.haty=} the notation of a constant path $\hat y$ given any $y\in\S^D$. In this section, we also write $(y)^\wedge = \hat y$. For $\kappa\in L^1$, we define $\check{\kappa}=(\kappa)^\vee =\int_0^1\kappa(r)\d r \in\S^D$. Then, for every $y\in \S^D$ and $\kappa\in L^1$, we have the following simple identities:
\begin{align}\label{e.proj_adjoint}
    y\cdot \check{\kappa} = \la \hat y, \kappa\ra_{L^2}\quad\text{and}\quad \Ll(\hat y\Rr)^\vee = y. 
\end{align}
Recall $\psi(\scdot,\scdot)$ in~\eqref{e.psi(q;x)=} and $\phi(\scdot,\scdot)$ in~\eqref{e.phi(y,x)=}. Also, recall the relation $\phi(y;x)=\psi\Ll(\hat y;x\Rr)$ from~\eqref{e.phi=psi}.
To prove Theorem~\ref{t.rs}, we show in Proposition~\ref{p.f-d-proj} that for every $x \in \R^D$, the function $(t,y) \mapsto f(t,\hat{y};x)$ satisfies a \emph{finite} dimensional Hamilton--Jacobi on $[0,t_\star) \times \S^D_+$. To do so, we need to recall a modified version of~\cite[Lemma~5.12]{chen2024simultaneous}. This lemma relies on the assumption that the support of $P_1$ spans $\R^D$.

\begin{lemma}[\cite{chen2024simultaneous}]\label{l.detect_jump}
    Fix any $x\in\R^d$. Let $q\in\mcl Q_\infty$ and set $p=\nabla_q\psi(q;x)$. For $0<s<s'<1$, we have $p(s')\neq p(s)$ if and only if $q(s')\neq q(s)$.
\end{lemma}

In~\cite{chen2024simultaneous}, the paths (such as $p,q$) are taken to be their left-continuous versions. But the result remains the same here for right-continuous ones (see the definition of $\mcl Q$ in Section~\ref{ss.setting}). The setting in~\cite{chen2024conventional} concerns possibly non-convex vector spin glass models without extra random spins and Curie--Weiss-type interactions. Namely, the function $G$ is chosen to be the null function in~\eqref{e.H^t,q;s,x_N(sigma,alpha,chi)=} and~\eqref{e.F_N(t,q;s,x)=}. We explain that the result in~\cite{chen2024conventional} can be adapted straightforwardly.

\begin{proof}[Sketch of Proof of Lemma~\ref{l.detect_jump}]    
    The proof of Lemma~\ref{l.detect_jump} and most of the other results in~\cite{chen2024conventional} are based on the analysis of the Parisi PDE and the associated SDE. The Parisi PDE gives a concrete way to compute the integration w.r.t.\ the Gaussian field $(w^q(\alpha))_\alpha$ indexed by the cascade variable $\alpha$ in $\psi(q;0)$ (see~\eqref{e.psi(q;x)=}). Now, with the additional field $x\cdot h(\tau,\chi_1)$, the initial condition (see~\cite[(4.2)]{chen2024simultaneous}) of the Parisi PDE becomes the function
    \begin{align*}
        \R^D \ni z\quad\mapsto\quad  \E_{\chi_1}\log \int \exp\Ll(\tau\cdot z-\tau\tau^\intercal\cdot a+x\cdot h(\tau,\chi_1)\Rr)\d P_1(\tau),
    \end{align*}
    where $a\in\S^D_+$ is some constant. The version in~\cite[(4.2)]{chen2024simultaneous} corresponds to the case $x=0$. The analysis in~\cite{chen2024simultaneous} only uses the smoothness, Lipschitzness, and convexity of the initial condition, which are still preserved despite the presence of $h(\tau,\chi_1)$. Therefore, working with the above initial condition, we can straightforwardly adapt~\cite[Lemma~5.12]{chen2024simultaneous} to Lemma~\ref{l.detect_jump} here.
\end{proof}

\begin{proposition}\label{p.f-d-proj}
    Let $x\in\R^d$ and let $f(\scdot,\scdot;x)$ be given as in Proposition~\ref{p.f=UZ}. Define $g(\scdot,\scdot;x):[0,t_\star)\times \S^D_+\to\R$ by
    \begin{e}
        g(t,y;x)=f(t,\hat y;x).
    \end{e}
    The function $g(\scdot,\scdot;x)$ is differentiable on $[0,t_\star)\times \S^D_+$ and satisfies
    \begin{align}\label{e.hj_g-p.f-d-proj}
        \begin{cases}
            \partial_t g(t,y;x) - \xi\big(\nabla_y g(t,y;x)\big)=0 &\quad \text{ for } (t,y)\in(0,t_\star)\times \S^D_+, \\
            g(0,y;x) = \phi(y;x) &\quad \text{ for } y \in \S^D_+. 
        \end{cases}
    \end{align}
    Moreover, at every $(t,y) \in [0,t_\star)\times \S^D_+$ we have,
    \begin{align}\label{e.der_g-p.f-d-proj}
        \partial_t g(t,y;x) = \partial_t f(t,\hat y;x)\quad\text{and}\quad \Ll(\nabla_y g(t,y;x)\Rr)^\wedge= \nabla_qf(t,\hat y;x).
    \end{align}
    Finally, setting $z=\nabla_yg(t,y;x)$, we have that $z$ is the unique solution to $z'= \nabla_y \phi(y+t\nabla\xi(z');x)$ over $z'\in\S^D_+$ and we have
    \begin{e}\label{e.fixed_pt_eqn_fd}
        g(t,y;x) = \phi(y+t\nabla\xi(z);x) - t \theta(z).
    \end{e}
\end{proposition}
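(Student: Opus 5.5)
The plan is to show that constant paths are preserved by the fixed point structure of Proposition~\ref{p.f=UZ}: if $q=\hat y$, then $p=\nabla_q f(t,\hat y;x)$ is itself a constant path. Everything else then reduces to finite-dimensional bookkeeping through the identities \eqref{e.proj_adjoint}.

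\textbf{Step 1: $\nabla_q f(t,\hat y;x)$ is constant.} Fix $(t,y)\in[0,t_\star)\times\S^D_+$ and let $p=\nabla_q f(t,\hat y;x)$. By \eqref{e.fixed_pt_for_f} we have $p=\nabla_q\psi(\hat y+t\nabla\xi(p);x)$, with $p\in\mcl Q_{\infty,\leq 1}$. Take the path $Z=\hat y+t\nabla\xi(p)$. If $p$ had a jump, i.e.\ $p(s)\neq p(s')$ for some $0<s<s'<1$, then Lemma~\ref{l.detect_jump} applied at $Z$ would give $Z(s)\neq Z(s')$, so $\nabla\xi(p(s))\neq\nabla\xi(p(s'))$. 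That is not yet a contradiction, so I would argue differently.

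Define the finite-dimensional map $z\mapsto \nabla_y\phi(y+t\nabla\xi(z);x)$ on $\{z\in\S^D_+:|z|\leq1\}$. The key claim is that for any $a\in\S^D_+$,
\begin{e*}
\nabla_q\psi(\hat a;x)=\big(\nabla_y\phi(a;x)\big)^\wedge.
\end{e*}
This follows from Lemma~\ref{l.detect_jump}: $\hat a$ has no jumps, so $\nabla_q\psi(\hat a;x)$ is constant on $(0,1)$, say equal to $\hat b$. Differentiating $\phi(a;x)=\psi(\hat a;x)$ in direction $y'$ and using \eqref{e.proj_adjoint} gives $y'\cdot\nabla_y\phi(a;x)=\langle \hat y',\hat b\rangle_{L^2}=y'\cdot b$, hence $b=\nabla_y\phi(a;x)$. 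Consequently, the contraction $p\mapsto\nabla_q\psi(\hat y+t\nabla\xi(p);x)$ on $\mcl Q_\infty$ maps constant paths to constant paths. Its Lipschitz constant is at most $t/t_\star<1$, by \eqref{e.pf.l.unique crit point}, and constant paths form a closed subset. So the unique fixed point of Lemma~\ref{l.unique crit point} is constant, $p=\hat z$, where $z$ is the unique fixed point of $z'\mapsto\nabla_y\phi(y+t\nabla\xi(z');x)$. Uniqueness over $\S^D_+$ follows from the same contraction estimate restricted to constant paths. We have $z\in\S^D_+$ since $p\in\mcl Q_\infty$.

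\textbf{Step 2: formulas.} Plugging $p=\hat z$ into \eqref{e.fixed_pt_for_f_2}, and using $\int_0^1\theta(\hat z)=\theta(z)$ together with $\phi(\cdot;x)=\psi(\hat\cdot;x)$, gives \eqref{e.fixed_pt_eqn_fd}. For differentiability, I use the fact that $y\mapsto\hat y$ is linear and isometric into $L^2$ (and bounded into $L^\infty$), so by the chain rule and the Fréchet differentiability of $f$ on $[0,t_\star)\times\mcl Q_\infty$, $g$ is differentiable. Moreover $\partial_tg=\partial_tf(t,\hat y)$, and $y'\cdot\nabla_yg=\langle\hat y',\nabla_qf(t,\hat y)\rangle_{L^2}=y'\cdot z$. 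Hence $\nabla_yg=z$ and $(\nabla_yg)^\wedge=\hat z=\nabla_qf$, which is \eqref{e.der_g-p.f-d-proj}. At boundary points of $\S^D_+$, derivatives are understood along admissible directions; this is harmless because $f$ is defined on $\mcl Q_\infty$ and the directions stay in the cone.

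\textbf{Step 3: the PDE.} From Proposition~\ref{p.f=UZ}, $\partial_tg=\int\xi(\nabla_qf(t,\hat y))=\int_0^1\xi(\hat z)=\xi(\nabla_yg)$. The initial condition is $g(0,y)=\psi(\hat y;x)=\phi(y;x)$.

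\textbf{Main obstacle.} The delicate point is the identity $\nabla_q\psi(\hat a;x)=(\nabla_y\phi(a;x))^\wedge$, which needs Lemma~\ref{l.detect_jump} (including constancy up to the endpoints, which is fine a.e.). It also requires care that directional derivatives of $\phi$ are only taken along directions keeping $a+\epsilon y'\in\S^D_+$. Since $\S^D_+$ has nonempty interior and spans $\S^D$, the vector $b$ is still determined, either by continuity or by computing at interior points first.
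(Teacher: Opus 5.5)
Your proof is correct and follows essentially the same route as the paper. The ingredients match: Lemma~\ref{l.detect_jump} shows that $\nabla_q\psi$ sends constant paths to constant paths, the identity $\nabla_y\phi(a;x)=(\nabla_q\psi(\hat a;x))^\vee$ comes from \eqref{e.proj_adjoint}, and the uniqueness in Lemma~\ref{l.unique crit point} identifies $\nabla_q f(t,\hat y;x)=\hat z$. The differences are organizational: you use invariance of the closed set of constant paths under the contraction, where the paper first solves the projected finite-dimensional fixed point and then lifts it, and your chain-rule argument for differentiability of $g$ is slightly cleaner than the paper's directional-derivative computation. The abandoned opening of your Step~1 can simply be deleted.
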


\begin{proof}
For brevity, we drop $x$ from the notation and write $g=g(\scdot,\scdot;x)$, $f=f(\scdot,\scdot;x)$, and $\psi=\psi(\scdot;x)$.
Fix any $(t,y) \in [0,t_\star)\times \S^D_+$.
From the definition of $g$, it is easy to see the first relation in~\eqref{e.der_g-p.f-d-proj}. To see the second one, let $y'\in\S^D_+$ and we have
\begin{align*}
    \frac{\d}{\d\eps}g(t,y+\eps(y'-y))\big|_{\eps=0} = \la (y'-y)^\wedge,\nabla_q f(t,\hat y) \ra_{L^2} 
    \\\stackrel{\eqref{e.proj_adjoint}}{=}(y'-y)\cdot \Ll(\nabla_q f(t,\hat y)\Rr)^\vee.
\end{align*}
Hence, we can deduce the differentiability of $g$ in $y$ and
\begin{align}\label{e.nabla_yg(t,y)=()^v}
    \nabla_y g(t,y) = \Ll(\nabla_q f(t,\hat y)\Rr)^\vee.
\end{align}
Using the definition of $t_\star$ in~\eqref{e.t_star=}, we can compute, for any $z,z'\in\S^D_+$,
\begin{align}\label{e.t_star_used_here}
    \Ll|\Ll(\nabla_q\psi(\hat y+t\nabla\xi(\hat z))\Rr)^\vee- \Ll(\nabla_q\psi(\hat y+t\nabla\xi(\hat z'))\Rr)^\vee\Rr|\leq tt_\star^{-1}\Ll|\hat z-\hat z'\Rr|_{L^\infty} = tt_\star^{-1}\Ll|z- z'\Rr|.
\end{align}
Due to $t<t_\star$, by the Banach fixed point theorem, there is a unique $z\in\S^D_+$ such that
\begin{align}\label{e.z=(...)^v}
    z = \Ll(\nabla_q\psi(\hat y+t\nabla\xi(\hat z))\Rr)^\vee.
\end{align}
Since $\hat y+t\nabla\xi(\hat z)$ is a constant path, Lemma~\ref{l.detect_jump} implies that $\nabla_q\psi(\hat y+t\nabla\xi(\hat z))$ is also a constant path. Therefore, the above display yields
\begin{align*}
    \hat z = \nabla_q\psi(\hat y+t\nabla\xi(\hat z)).
\end{align*}

On the other hand, \eqref{e.fixed_pt_for_f} in Proposition~\ref{p.f=UZ} together with Lemma~\ref{l.unique crit point} implies that $\nabla_q f(t,\hat y)$ is the unique solution in $\mcl Q_2$ to $p=\nabla_q\psi(\hat y+t\nabla\xi(p))$. This, along with the above display, gives $\hat z= \nabla_q f(t,\hat y)$. Hence, $\nabla_q f(t,\hat y)$ is a constant path and thus $\Ll(\Ll(\nabla_q f(t,\hat y)\Rr)^\vee\Rr)^\wedge = \nabla_q f(t,\hat y)$, which together with~\eqref{e.nabla_yg(t,y)=()^v} gives the second relation in~\eqref{e.der_g-p.f-d-proj}. Using these, we also have $\int \xi(\nabla_qf(t,\hat y))= \xi(\nabla_yg(t,y))$. This along with~\eqref{e.phi=psi} that $f$ satisfies the equation~\eqref{e.hj} implies that $g$ satisfies the equation~\eqref{e.hj_g-p.f-d-proj}.

Lastly, we turn to~\eqref{e.fixed_pt_eqn_fd} and the uniqueness statement above~\eqref{e.fixed_pt_eqn_fd}.
Notice that $(y+t\nabla\xi(z))^\wedge=\hat y + t\nabla\xi(\hat z)$.
Due to~\eqref{e.phi=psi}, by the same argument for~\eqref{e.nabla_yg(t,y)=()^v}, we have $\nabla_y\phi(y+t\nabla\xi(z))=\Ll(\nabla_q\psi(\hat y+t\nabla\xi(\hat z))\Rr)^\vee$, which along with~\eqref{e.z=(...)^v} gives $z= \nabla_y \phi(y+t\nabla\xi(z);x)$.
Using~\eqref{e.t_star_used_here}, we can also see that $z$ is the unique solution of this relation as claimed. Since we have shown $\hat z = \nabla_qf(t,\hat y)$, by~\eqref{e.fixed_pt_for_f_2} (see $\msc P_{t,q}$ in~\eqref{e.P_t,q(p;x)=}), we have $g(t,y)=f(t,\hat y)=\psi(\hat y+ t\nabla\xi(\hat z))-t\theta(z)$, which along with~\eqref{e.phi=psi} gives~\eqref{e.fixed_pt_eqn_fd}.
\end{proof}


\begin{proof}[Proof of Theorem~\ref{t.rs}]
The theorem follows from Proposition~\ref{p.f-d-proj}.
\end{proof}


\appendix

\section{Multi-species models}\label{s.multi-sp}

We adapt the main results to multi-species spin glass models. 

\subsection{Setting}


We fix a finite set $\sS$ containing symbols for different species.
For each $N\in\N$, let $(I_{N,\s})_{\s\in\sS}$ be a partition of $\{1,\dots,N\}$. Each $I_{N,\s}$ contains indices belonging to the $\s$-species. 
For each $N\in\N$, we define
\begin{align}\label{e.lambda_N,s=}
    \lambda_{N,\s} = |I_{N,\s}|/N,\quad\forall \s\in\sS;\qquad \lambda_N=\Ll(\lambda_{N,\s}\Rr)_{\s\in\sS}.
\end{align}
For each $N \in \N\cup\{\infty\}$, we consider the simplex
\begin{align*}
    \blacktriangle_N = \Big\{(\lambda_\s)_{\s\in\sS}\ \big|\ \lambda_\s \in [0,1]\cap \left(\frac{1}{N}\Z \right),\,\forall \s\in\sS;\; \sum_{\s\in\sS}\lambda_\s =1\Big\}.
\end{align*}
Here $\frac{1}{N}\Z = \{ k/N : k \in \Z \}$, with the understanding that $\Z/\infty = \R$ when $N=\infty$. In view of~\eqref{e.lambda_N,s=}, we have $\lambda_N\in \blacktriangle_N$ for each $N\in\N$. Throughout this appendix, we assume that
\begin{align}\label{e.limlambda_N=lambda_infty}
    \lim_{N\to\infty} \lambda_N = \lambda_\infty,
\end{align}
for some $\lambda_\infty\in \blacktriangle_N$.

For each $\s\in\sS$, let $P_\s$ be a finite positive measure supported on $[-1,+1]$ and we assume $\supp P_\s \supsetneqq\{0\}$ (related to the fact we require the support of $P_1$ to span $\R^D$).
For every $N\in\N$, a spin configuration takes the form $\sigma = (\sigma_{1},\dots , \sigma_{N}) \in [-1,+1]^N$, where each spin~$\sigma_n$ is independently drawn from $P_\s$ if $n\in I_{N,\s}$.
In other words, denoting by $P_N$ the distribution of $\sigma$, we have
\begin{align*}
    \d P_N(\sigma) = \otimes_{\s\in\sS}\otimes_{i \in I_{N,\s}} \d P_\s(\sigma_i).
\end{align*}

For two spin configurations $\sigma,\sigma'$ of size $N$ and $\s\in\sS$, we consider the overlap of the $\s$-species:
\begin{align*}
    R_{N,\s}(\sigma,\sigma') = \tfrac{1}{N}\sigma_{I_{N,\s}} \cdot\sigma'_{I_{N,\s}} \in [-1,+1],
\end{align*}
where $\sigma_{I_{N,\s}} = (\sigma_i)_{i\in I_{N,\s}}$ is a vector in $\R^{I_{N,\s}}$ and similarly for $\sigma'_{I_{N,\s}}$. Hence, the spin overlap is $\R^\sS$-valued:
\begin{align*}
    R_N(\sigma,\sigma') = \Ll(R_{N,\s}(\sigma,\sigma')\Rr)_{\s\in\sS}.
\end{align*}

Let $\xi:\R^\sS\to\R$ be a smooth function and assume the existence of a centered Gaussian process $(H_N(\sigma))_{\sigma\in [-1,+1]^N}$ with covariance
\begin{align*}
    \E\Ll[ H_N(\sigma)H_N(\sigma')\Rr] = N\xi\Ll(R_N(\sigma,\sigma')\Rr).
\end{align*}
Again, $\xi$ is not assumed to be convex.

Fix any $\s\in\sS$.
Let $L_\s\in\N$ and let $(\chi_{\s,i})_{i\in I_{N,\s}}$ be i.i.d.\ $\R^{L_\s}$-valued random vectors, which are independent from other randomness. 
Let $d_\s\in\N$ and let $h_\s:\R\times \R^{L_\s} \to \R^{d_\s}$ be a bounded measurable function.
For each $i\in I_{N,\s}$, we view $h_\s(\sigma_i,\chi_{\s,i})$ as a generalized spin. We define the $\s$-species mean magnetization and the total mean magnetization as, respectively,
\begin{align*}
    m_{N,\s} = \frac{1}{N}\sum_{i\in I_{N,\s}} h_\s(\sigma_i,\chi_i)\quad\text{and}\quad m_N=\Ll(m_{N,\s}\Rr)_{\s\in\sS},
\end{align*}
which are $\R^{d_\s}$-valued and $\prod_{\s\in\sS}\R^{d_\s}$-valued.
Let $G:\prod_{\s\in\sS}\R^{d_\s}\to\R$ be a continuous function and we add to the system the Curie--Weiss-type interaction $NG\Ll(m_N\Rr)$.

Recall that in the setting of the vector spin glass, $\mcl Q$ is the collection of increasing càdlàg paths $q:[0,1) \to \S^D_+$. Here, we take $\mcl Q(1)$ to be $\mcl Q$ with $D=1$. In other words, $\mcl Q(1)$ consists of all increasing càdlàg paths from $[0,1)$ to $\R_+$. Here, $1$ indicates that the paths are one-dimensional. Then, for $\sfp\in[0,\infty]$, we write $\mcl Q_\sfp(1) = \mcl Q(1)\cap L^\sfp([0,1);\R)$. For each $q\in\mcl Q_\infty(1)$, we set $q(1) = \lim_{s\uparrow1}q(s)$. The collection of paths relevant in the multi-species setting is
\begin{align*}
    \mcl Q^\sS_\sfp = \prod_{\s\in\sS}\mcl Q_\sfp(1).
\end{align*}
For each $q\in\mcl Q^\sS_\sfp$, we write $q=(q_\s)_{\s\in\sS}$ with each $q_\s \in \mcl Q_\sfp(1)$.

For $\s\in\sS$ and $q_\s\in \mcl Q_\infty(1)$, let $(w^{q_\s}_i)$ be i.i.d.\ centered Gaussian fields where the covariance of each $(w^{q_\s}_i(\alpha))_{\alpha\in\supp\fR}$ is given similarly as in~\eqref{e.Ew^qw^q=}.  
For $N\in \N$, $t \in\R_+$, $q\in\mcl Q_\infty^\sS$, 
we consider the Hamiltonian
\begin{align*}
    H^{G}_{N;t,q}(\sigma,\alpha) &= \sqrt{2t}H_N(\sigma) - tN\xi\Ll(R_N(\sigma,\sigma')\Rr)
    \\
    &+ \sqrt{2}\sum_{\s\in\sS}\sum_{i\in I_{N,\s}} w^{q_\s}_i(\alpha) \sigma_i - q(1)\cdot R_N(\sigma,\sigma') 
    \\
    &+ NG\Ll(m_N\Rr). 
\end{align*}
Then, we consider the enriched free energy defined by
\begin{align}\label{e.F_N(t,q;s,x)=-ms}
    \bar F^G_N(t,q) &= -\frac{1}{N}\E\log \iint \exp\Ll(H^{G}_{N;t,q}(\sigma,\alpha)\Rr) \d P_N(\sigma)\d\mathfrak{R}(\alpha)
\end{align}
where $\E$ averages all the randomness.

For $q=(q_\s)_{\s\in\sS}\in\mcl Q_\infty^\sS$, $x=(x_\s)_{\s\in\sS}\in\prod_{\s\in\sS}\R^{d_\s}$, and $\lambda\in\blacktriangle_\infty$, we set
\begin{align*}
    &\psi_\s(q_\s;x_\s) 
    \\
    &= -\E\log\iint\exp\Ll(\sqrt{2}w^{q_\s}(\alpha)\tau-q_\s(1)\tau^2+x_\s\cdot h_\s(\tau,\chi_{\s,1})\Rr) \d P_\s(\tau)\d\mathfrak{R}(\alpha)
\end{align*}
and
\begin{align*}
    \psi_\lambda(q;x) = \sum_{\s\in\sS}\lambda_\s \psi_\s(q_\s;x_\s).
\end{align*}
Given $\xi$, we define
\begin{align*}
    \theta(a) = a\cdot\nabla\xi(a)-\xi(a),\quad\forall a \in\R^\sS.
\end{align*}



\subsection{Adapted main results}

Theorems~\ref{t.free_energy}, \ref{t.ldp}, and~\ref{t.rs} can be adapted as follows in this setting. We define $t_\star$ by

\begin{align}\label{e.t_star=-ms}
     t^{-1}_\star = \sup_x \Ll\|\nabla_q \psi_{\lambda_\infty}(\scdot;x)\Rr\|_{\mathrm{Lip}(\mcl Q^\sS_\infty,\mcl Q^\sS_\infty)}\Ll\|\nabla \xi\lfloor_{[-1,1]^\sS}\Rr\|_{\mathrm{Lip}([-1,1]^\sS;\R^\sS)}.
\end{align}

\begin{theorem}[Limit free energy] \label{t.free_energy-ms}
     Assume~\eqref{e.limlambda_N=lambda_infty}. For $x \in \prod_{\s\in\sS}\R^{d_\s}$, there is a twice Fréchet differentiable function $f(\cdot,\scdot;x) : [0,t_\star) \times \mcl{Q}_\infty^{\sS} \to \R$ that satisfies
     \begin{e*}
         \begin{cases}
              \partial_t f(t,q;x) - \int \xi(\nabla_q f(t,q;x)) = 0 &\quad\text{for } (t,q)\in (0,t_\star) \times  \mcl{Q}_\infty^{\sS}, \\ f(0,q;x)  = \psi_{\lambda_\infty}(q;x) &\quad\text{for }  q \in \mcl{Q}_\infty^{\sS}.
         \end{cases}
     \end{e*}
     For every $(t,q) \in [0,t_\star) \times \mcl{Q}_\infty^{\sS}$ and every continuous function $G:\prod_{\s\in\sS}\R^{d_\s}\to\R$ we have 
     \begin{e*}
        \lim_{N \to +\infty} \bar F^G_N(t,q) = \inf_m \sup_{x} \left\{ f(t,q;x) + m \cdot x - G(m) \right\}.
    \end{e*}
    Moreover, for every $x \in \prod_{\s\in\sS}\R^{d_\s}$, there is a unique solution $p^x\in\mcl Q_\infty^\sS$ of the fixed point equation
\begin{e*} 
        p = \nabla_q \psi_{\lambda_\infty}(q+t \nabla \xi(p);x) \quad\text{for $p\in \mcl Q_\infty^\sS$}.
\end{e*} 
We have,
\begin{e*}
\begin{split}
    f(t,q;x ) = \psi_{\lambda_\infty}(q +t \nabla \xi(p^x);x) - t \int_0^1 \theta(p^x(u)) \d u \quad\text{and}\quad \nabla_qf(t,q;x) = p^x.
\end{split}
\end{e*}
\end{theorem}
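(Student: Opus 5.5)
The proof transfers the vector-spin argument of Sections~\ref{s.psi_reg}--\ref{s.proof_main_results} to the product space $\mcl Q^\sS_\infty=\prod_{\s}\mcl Q_\infty(1)$, with $\psi$ replaced by $\psi_{\lambda_\infty}$. The first step is to establish the analogue of Proposition~\ref{p.psi_smooth} for $\psi_{\lambda_\infty}$. Since $\psi_{\lambda_\infty}(q;x)=\sum_\s \lambda_{\infty,\s}\psi_\s(q_\s;x_\s)$ is a weighted sum of one-species functions, each $\psi_\s$ is a special case of $\psi$ with $D=1$ and $P_1=P_\s$ (the support condition $\supp P_\s\supsetneqq\{0\}$ is exactly spanning $\R$). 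All estimates \eqref{e.psi_Lip}--\eqref{e.psi_smooth(4).2} therefore hold componentwise. With the convention $\la q,p\ra=\sum_\s\lambda_{\infty,\s}\la q_\s,p_\s\ra_{L^2}$, the gradient $\nabla_q\psi_{\lambda_\infty}$ has components $\nabla\psi_\s$. We fix this pairing so that the nonlinearity $\int\xi(\nabla_q f)$ and the Lipschitz constant in \eqref{e.t_star=-ms} are consistent. Lemma~\ref{l.sP(p^x)_critical} then holds verbatim with $B$ replaced by $[-1,1]^\sS$, since paths in the image of $\nabla\psi_{\lambda_\infty}$ take values there.

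Next, I would rerun Section~\ref{s.short_time} word for word. Define $X(t,q;x)=q-t\nabla\xi(\nabla_q\psi_{\lambda_\infty}(q;x))$. The bound $\|\mathbf I-\nabla X\|_{\mathrm{Op}(L^\infty;L^\infty)}\le t/t_\star$ follows from \eqref{e.t_star=-ms} by the same mollification argument as in Lemma~\ref{l.invert_grad_X}. The inverse map $Z$ comes from the Banach fixed point argument of Lemma~\ref{l.Z}. The function $f=U(t,Z(t,q;x);x)$ is twice Fréchet differentiable, solves the equation, and satisfies $\nabla_q f=p^x$ and $f=\msc P_{t,q}(p^x;x)$, exactly as in Proposition~\ref{p.f=UZ}. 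Uniqueness of $p^x$ is Lemma~\ref{l.unique crit point}. These steps produce the PDE statement and the critical point representation.

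For the limit of $\bar F^{m\mapsto x\cdot m}_N$, I would follow Proposition~\ref{p.id_limF}. This requires three ingredients: the Lipschitz bound \eqref{e.F_NLip}, the semi-concavity in $t$ from Lemma~\ref{l.semi-concave_in_t} (whose Jensen argument is unchanged), and the multi-species versions of \cite[Propositions~5.3, 7.2, 7.3]{chenmourrat2023cavity}. Those multi-species versions are the main obstacle. At finite $N$ the relevant initial condition is $\psi_{\lambda_N}$, not $\psi_{\lambda_\infty}$, and the cavity/fixed-point identification of $\partial_t$ at Gateaux differentiability points has to be done species by species.

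I would handle the $\lambda_N$ issue using \eqref{e.psi_Lip}. We have $|\psi_{\lambda_N}-\psi_{\lambda_\infty}|\le C|\lambda_N-\lambda_\infty|$ uniformly on bounded sets, and the Gibbs computations for $\bar F_N$ depend on $\lambda_N$ continuously. Any subsequential limit therefore satisfies the derivative identity with $\psi_{\lambda_\infty}$. The remaining inputs are the multi-species adaptations of the cited results, which I would invoke from \cite{barcon,pan.multi} and \cite{chen2024simultaneous}, as indicated in Remark~\ref{r.multi-sp}.

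Once $\lim_N\bar F^{m\mapsto x\cdot m}_N(t,q)=f(t,q;x)$ is established, three further steps complete the proof. Lemma~\ref{l.varphi} gives differentiability in $x$. Then Lemma~\ref{l.ldp_G=0} (Gärtner--Ellis) and Lemma~\ref{l.log<e^G>} (Varadhan) apply unchanged, since $m_N$ is bounded. This yields the inf--sup formula exactly as in the proof of Theorem~\ref{t.free_energy}.
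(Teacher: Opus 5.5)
Your overall route is the paper's recipe: a componentwise transfer of Proposition~\ref{p.psi_smooth}, a rerun of Section~\ref{s.short_time}, then Section~\ref{s.proof_main_results} with multi-species inputs. However, the one concrete choice you add, the weighted pairing $\sum_\s\lambda_{\infty,\s}\la q_\s,p_\s\ra_{L^2}$, is the wrong normalization, and it makes the identification of the limit fail.

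In this model $\xi$ is evaluated at $R_N$, whose components $R_{N,\s}$ are normalized by $N$, not by $|I_{N,\s}|$. Gaussian integration by parts gives $\partial_t\bar F^{m\mapsto x\cdot m}_N=\E\la\xi(R_N(\sigma^1,\sigma^2))\ra$. For the flat pairing $\sum_\s\la\kappa_\s,\kappa'_\s\ra_{L^2}$ it gives $\la\kappa,\nabla_q\bar F^{m\mapsto x\cdot m}_N\ra=\E\la\kappa(\alpha^1\wedge\alpha^2)\cdot R_N(\sigma^1,\sigma^2)\ra$. At $t=0$ this flat gradient is $(\lambda_{N,\s}\nabla\psi_\s(q_\s;x_\s))_\s$. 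So the equation $\partial_t f=\int\xi(\nabla_q f)$ and the fixed point equation $p=\nabla_q\psi_{\lambda_\infty}(q+t\nabla\xi(p);x)$ are correct only for the flat gradient.

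Your weighting strips the factors $\lambda_{\infty,\s}$. Two things then go wrong:
\begin{itemize}
\item Your characteristics build a solution of a different equation.
\item In the analogue of Proposition~\ref{p.id_limF}, the point $p'$ supplied by the cavity input is a limiting law of $R_N$, so it solves the flat fixed point equation. Uniqueness, as in Lemma~\ref{l.unique crit point}, for your weighted equation then says nothing about $p'$.
\end{itemize}

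A quick check: take two species, $\xi(a)=a_1a_2$, $P_\s$ uniform on $\{-1,1\}$, $h_\s(\tau,\chi)=\tau$, and $q=\hat 0$. At $t=0$ the Gibbs measure is a product measure, so $\partial_t\bar F^{m\mapsto x\cdot m}_N(0,\hat 0)=\lambda_{N,1}\lambda_{N,2}\tanh^2 x_1\tanh^2 x_2$. In the limit this matches $\int\xi(\nabla_q\psi_{\lambda_\infty}(\hat 0;x))$ computed with the flat gradient. Your equation instead gives $\partial_t f(0,\hat 0;x)=\tanh^2 x_1\tanh^2 x_2$.

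The fix is to use the flat norm on $L^{\sfp}([0,1);\R^{\sS})$, as the paper does. Then:
\begin{itemize}
\item $\nabla_q\psi_{\lambda_\infty}=(\lambda_{\infty,\s}\nabla\psi_\s(q_\s;x_\s))_\s$ takes values in $\prod_\s[0,\lambda_{\infty,\s}]\subset[-1,1]^{\sS}$.
\item The regularity from Proposition~\ref{p.psi_smooth} still applies componentwise, with each component now carrying the factor $\lambda_{\infty,\s}\le 1$.
\item $t_\star$ in \eqref{e.t_star=-ms} is read with this gradient.
\end{itemize}
With that correction, the rest of your plan coincides with the paper's.

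A smaller point concerns sources. The inputs needed in the analogue of Proposition~\ref{p.id_limF} concern the enriched free energy with non-convex $\xi$. They are: Gateaux differentiability of subsequential limits on a dense set; $\partial_t f=\int\xi(p')$ for some fixed point $p'$; and the limit being equal to $\msc P_{t,q}(p;x)$ for some fixed point $p$. The works \cite{barcon,pan.multi} treat the convex (positive-definite) case and do not supply these. \cite{chen2024simultaneous} is only needed for the replica-symmetric reduction, not for this theorem. The paper takes these inputs from the multi-species counterparts in \cite{multi-sp} of the results of \cite{chenmourrat2023cavity}. It takes the initial condition $\bar F^{m\mapsto x\cdot m}_N(0,q)=\psi_{\lambda_N}(q;x)$ from \cite[Lemma~4.11]{multi-sp}.
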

The Gibbs measure associated to $H^{G}_{N;t,q}$ is the (random) probability measure defined on $R^N \times \supp \mfk R$  by
\begin{align*}
    \la\cdot\ra_{N;t,q}^G \quad\propto\quad \exp\Ll(H^G_{N;t,q}(\sigma,\alpha)\Rr)\d P_N(\sigma)\mathfrak{R}(\alpha).
\end{align*}
We denote by $\Omega$ the underlying probability space for $H^G_{N;t,q}$ and $\mfk R$, and write $\la\cdot\ra_{N;t,q}^{G,\omega}$ as a realization of $\la\cdot\ra_{N;t,q}^G$ for any $\omega\in\Omega$.

\begin{theorem}[Large deviations of $m_N$] \label{t.ldp-ms}
    Assume~\eqref{e.limlambda_N=lambda_infty}. Let $t < t_\star$ and $q \in \mcl Q_\infty^\sS$. There is a full measure subset $\Omega' \subset \Omega$ such that the following holds. For every $\omega\in\Omega'$ and every continuous function $G:\prod_{\s\in\sS}\R^{d_\s}\to\R$, the random variable $m_N$ satisfies a large deviation principle under $\la\cdot\ra_{N;t,q}^{G,\omega}$ with rate function $I^G_{t,q}$ defined by
    \begin{e*}
        I_{t,q}^G(m) =  -G(m) + f^*(t,q;m)  + \sup_{m'} \left\{ G(m') - f(t,q;m') \right\} ,\  \forall m \,  \in \prod_{\s\in\sS}\R^{d_\s},
    \end{e*}
    where $f^*(t,q;m)  = \sup_x \left\{ x \cdot m + f(t,q;x)\right\}$ with $f$ given in Theorem~\ref{t.free_energy-ms}.
\end{theorem}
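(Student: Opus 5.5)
The plan is to reduce Theorem~\ref{t.ldp-ms} to the vector-spin argument of Section~\ref{s.proof_main_results}. Most steps carry over verbatim; the only genuinely new inputs are the multi-species analogues of Corollary~\ref{c.limF_N=f} and Lemma~\ref{l.varphi}. Both are supplied by (the proof of) Theorem~\ref{t.free_energy-ms}.

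\emph{First step: the linear field and finite $N$ effects.} For each $x\in\prod_{\s}\R^{d_\s}$, I would establish
\begin{equation*}
\lim_{N\to\infty}\bar F_N^{m\mapsto x\cdot m}(t,q)=f(t,q;x).
\end{equation*}
This should follow by rerunning the proof of Proposition~\ref{p.id_limF}, with $\psi$ replaced by $\psi_{\lambda_\infty}$. The one new point is that at $t=0$ the free energy equals $\psi_{\lambda_N}(q;x)=\sum_\s\lambda_{N,\s}\psi_\s(q_\s;x_\s)$ rather than $\psi_{\lambda_\infty}$. Since each $\psi_\s$ is bounded and the cavity-type identities of \cite{chenmourrat2023cavity} are stable under perturbations of $\lambda$, assumption~\eqref{e.limlambda_N=lambda_infty} makes these differences vanish.

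\emph{Second step: the analogue of Lemma~\ref{l.varphi}.} Here $\psi_{\lambda_\infty}$ inherits Proposition~\ref{p.psi_smooth} coordinatewise: it is a convex combination of one-dimensional $\psi_\s$, and $|h_\s|$ is bounded. Lemma~\ref{l.sP(p^x)_critical} and the contraction bound~\eqref{e.t_star_used_here_2} then hold with $t_\star$ as in~\eqref{e.t_star=-ms}. Together these give that $x\mapsto f(t,q;x)$ is concave, Lipschitz and $C^1$. I expect this to be the main obstacle, or at least the point requiring care. One must check that the Lipschitz constant in~\eqref{e.t_star=-ms} controls the product-space map $p\mapsto\nabla_q\psi_{\lambda_\infty}(q+t\nabla\xi(p);x)$ in the $L^\infty$ norm on $\mcl Q^\sS_\infty$. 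That norm should be taken as the max over species of the $L^\infty$ norms, so that $\nabla\xi\lfloor_{[-1,1]^\sS}$ is Lipschitz with the stated constant.

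\emph{Third step: the large deviation argument.} With these in hand, I would copy the chain Lemma~\ref{l.ldp_G=0} $\to$ Lemma~\ref{l.log<e^G>} $\to$ Bryc, in order.
\begin{itemize}[label={}]
\item First, use Gaussian concentration plus Efron--Stein, with $h_\s$ bounded, to get almost sure convergence of $F_N^{m\mapsto y\cdot m}(t,q)$ for all rational $y$ on a full-measure $\Omega'$. Uniform Lipschitzness in $y$ then extends this to all $y$.
\item Next, apply G\"artner--Ellis, using differentiability from the second step, to obtain an LDP under $\la\cdot\ra^{0,\omega}_{N;t,q}$ with rate $f^*(t,q;\cdot)-f(t,q;0)$.
\item Then Varadhan's lemma, valid because $m_N$ is bounded, gives $\lim\frac1N\log\la e^{NG(m_N)}\ra^{0,\omega}=\sup\{G-I_{t,q}\}$ for every continuous $G$.
\item Finally, write $\la e^{N\Gamma(m_N)}\ra^{G,\omega}$ as a ratio of two such quantities and apply Bryc's inverse Varadhan lemma. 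This yields the LDP with rate $-G+f^*+\sup\{G-f^*\}$, which is the stated $I^G_{t,q}$ (reading the $f$ in the supremum as $f^*$).
\end{itemize}
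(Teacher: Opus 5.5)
Your proposal is correct and follows the paper's own recipe. You transfer Proposition~\ref{p.f=UZ}, Corollary~\ref{c.limF_N=f} and Lemma~\ref{l.varphi} to the multi-species setting (replacing $\psi$ by $\psi_{\lambda_\infty}$, using $\psi_{\lambda_N}\to\psi_{\lambda_\infty}$ at $t=0$, and \cite{multi-sp} in place of \cite{chenmourrat2023cavity}), then rerun Lemma~\ref{l.ldp_G=0}, Lemma~\ref{l.log<e^G>} and Bryc's lemma verbatim. You are also right that the supremum in $I^G_{t,q}$ should read $f^*$ rather than $f$.

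One minor point: the paper's norm on $\mcl Q^\sS_\infty$ is the essential supremum of the pointwise Euclidean norm on $\R^\sS$. That norm already pairs with the Euclidean Lipschitz constant of $\nabla\xi\lfloor_{[-1,1]^\sS}$ in \eqref{e.t_star=-ms}, so you do not need to switch to a max-over-species norm, and doing so would in general change the value of $t_\star$.
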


Given $y\in\R^\sS_+$, we define $\hat y$ similar as in~\eqref{e.haty=} to be a constant path. 
For $y=(y_\s)_{\s\in\sS}\in\R^\sS_+$, $x=(x_\s)_{\s\in\sS}\in\prod_{\s\in\sS}\R^{d_\s}$, and $\lambda\in\blacktriangle_\infty$, we set
\begin{align*}
    &\phi_\s(y_\s;x_\s) 
    = -\E\log\int\exp\Ll(\sqrt{2 y_\s}\eta_\s\tau-y_\s\tau^2+x_\s\cdot h_\s(\tau,\chi_{\s,1})\Rr) \d P_s(\tau),
\end{align*}
where $\eta_\s$ is an independent standard Gaussian random variable, and we also set
\begin{align*}
    \phi_\lambda(y;x) = \sum_{\s\in\sS}\lambda_\s \phi_\s(y_\s;x_\s).
\end{align*}
By a similar argument as in Remark~\ref{r.Gauss_ext_field}, we have $\phi_\lambda(y;x) = \psi_\lambda(\hat y;x)$.

\begin{theorem}[Replica symmetric form]\label{t.rs-ms}
Assume~\eqref{e.limlambda_N=lambda_infty}. Let $x\in\prod_{\s\in\sS}\R^{d_\s}$. Define $g(\scdot,\scdot;x):[0,t_\star)\times \R^\sS_+\to\R$ by
    \begin{e*}
        g(t,y;x)=f(t,\hat y;x),
    \end{e*}
with $f$ given in Theorem~\ref{t.free_energy-ms}.
    The function $g(\scdot,\scdot;x)$ is differentiable on $[0,t_\star)\times \R^\sS_+$ and satisfies
    \begin{align*}
        \begin{cases}
            \partial_t g(t,y;x) - \xi\big(\nabla_y g(t,y;x)\big)=0 &\quad \text{ for } (t,y)\in(0,t_\star)\times \R^\sS_+, \\
            g(0,y;x) = \phi_{\lambda_\infty}(y;x) &\quad \text{ for } y \in \R^\sS_+. 
        \end{cases}
    \end{align*}
    Moreover, at every $(t,y)\in (0,t_\star)\times \R^\sS_+$, there is a unique solution $z\in\R^\sS_+$ to the fixed point equation
    \begin{align*}
        z' = \nabla_y \phi_{\lambda_\infty}(t\nabla\xi(z');x) \quad\text{for }z'\in\R^\sS_+,
    \end{align*}
    and we have
    \begin{e*}
        g(t,y;x) = \phi_{\lambda_\infty}(y+t\nabla\xi(z);x) - t \theta(z)\quad\text{and}\quad \nabla_y g(t,y;x) = z.
    \end{e*}
\end{theorem}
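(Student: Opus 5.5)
The statement to prove is Theorem~\ref{t.rs-ms}, the multi-species replica-symmetric form. I would derive it exactly as Theorem~\ref{t.rs} was derived from Proposition~\ref{p.f-d-proj}, running the same argument componentwise over the species. As input I take Theorem~\ref{t.free_energy-ms} as already established. Concretely, $f(\cdot,\scdot;x)$ is the classical solution on $[0,t_\star)\times\mcl Q^\sS_\infty$ built from characteristics; for $t<t_\star$, $\nabla_q f(t,q;x)$ is the unique fixed point of $p=\nabla_q\psi_{\lambda_\infty}(q+t\nabla\xi(p);x)$; and $f(t,q;x)=\psi_{\lambda_\infty}(q+t\nabla\xi(p);x)-t\int_0^1\theta(p)$. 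I use the projection maps $y\mapsto\hat y$ and $\kappa\mapsto\check\kappa$ from $\R^\sS$-valued paths, species by species. The identities $y\cdot\check\kappa=\la\hat y,\kappa\ra_{L^2}$ and $(\hat y)^\vee=y$ hold verbatim in this setting.

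\emph{Step 1 (differentiability of $g$).} Differentiate $g(t,y)=f(t,\hat y)$ along $y+\eps(y'-y)$. The adjoint identity gives $\nabla_y g(t,y)=(\nabla_q f(t,\hat y))^\vee$ and $\partial_t g(t,y)=\partial_t f(t,\hat y)$. Differentiability follows from the Fréchet differentiability of $f$.

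\emph{Step 2 (constant paths are preserved).} This is the main obstacle. I need a multi-species analogue of Lemma~\ref{l.detect_jump}: if $q\in\mcl Q_\infty^\sS$ is constant, then $\nabla_q\psi_{\lambda_\infty}(q;x)$ is constant. Since $\psi_{\lambda_\infty}(q;x)=\sum_\s\lambda_\s\psi_\s(q_\s;x_\s)$ decouples, its gradient is $(\lambda_\s\nabla\psi_\s(q_\s;x_\s))_\s$, which lets me reduce to the scalar case. Each $\psi_\s$ is the $D=1$ instance of $\psi$, with $P_1$ replaced by $P_\s$; the hypothesis $\supp P_\s\supsetneqq\{0\}$ plays the role of the spanning condition. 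So Lemma~\ref{l.detect_jump} applies to each factor: a constant $q_\s$ has no jumps, hence its gradient has none, and a càdlàg increasing path without jumps on $(0,1)$ is constant. (Strictly, a path with no jumps could still vary continuously; but the lemma states $p(s')\neq p(s)$ iff $q(s')\neq q(s)$, which gives constancy directly.) If a species has $\lambda_\s=0$, its component is trivially constant.

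\emph{Step 3 (fixed point and identification).} The map $z\mapsto(\nabla_q\psi_{\lambda_\infty}(\hat y+t\nabla\xi(\hat z)))^\vee$ is a contraction on $\R^\sS_+$ with constant $t/t_\star$, using \eqref{e.t_star=-ms} and $|\hat z-\hat z'|_{L^\infty}=|z-z'|$. Its unique fixed point $z$ satisfies $\hat z=\nabla_q\psi_{\lambda_\infty}(\hat y+t\nabla\xi(\hat z))$ by Step 2. By uniqueness of the path fixed point, $\hat z=\nabla_q f(t,\hat y)$. Hence $\nabla_y g=z$, and $\int\xi(\nabla_q f(t,\hat y))=\xi(z)$, so $g$ solves the finite-dimensional equation; the initial condition is $g(0,y)=\psi_{\lambda_\infty}(\hat y)=\phi_{\lambda_\infty}(y)$.

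Differentiating $\phi_{\lambda_\infty}(y)=\psi_{\lambda_\infty}(\hat y)$ in the same way as in Step 1 gives $\nabla_y\phi_{\lambda_\infty}(y+t\nabla\xi(z))=(\nabla_q\psi_{\lambda_\infty}(\hat y+t\nabla\xi(\hat z)))^\vee$. This yields the finite-dimensional fixed-point equation for $z$, understood with the shift $y+t\nabla\xi(z')$ as in Proposition~\ref{p.f-d-proj}. Uniqueness of $z$ follows from the same contraction estimate. Finally, substituting $p=\hat z$ into the critical-point formula from Theorem~\ref{t.free_energy-ms} gives $g(t,y)=\phi_{\lambda_\infty}(y+t\nabla\xi(z))-t\theta(z)$.
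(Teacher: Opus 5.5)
Your proposal is correct and follows essentially the same route as the paper. The paper obtains Theorem~\ref{t.rs-ms} by rerunning the proof of Proposition~\ref{p.f-d-proj} in the multi-species setting: the projection identities, the contraction argument for $z$, the preservation of constant paths via Lemma~\ref{l.detect_jump}, and the uniqueness of the path fixed point. Your species-by-species decoupling of $\nabla_q\psi_{\lambda_\infty}$, which justifies the multi-species version of Lemma~\ref{l.detect_jump}, and your reading of the fixed-point equation with the shift $y+t\nabla\xi(z')$ both correctly fill in details the paper leaves implicit.
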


\subsection{Recipe for adaptation}

Instead of rewriting everything in detail, we want to avoid repetition and give a recipe for adapting existing results to the multi-species setting.

We mention the difference in initial conditions.
In the vector spin glass setting, we simply have $\bar F^{x \mapsto x \cdot m}_N(0,q)=\psi(q;x)$ (see the line above~\eqref{e.psi(q;x)=}). Here, we have (see~\cite[Lemma~4.11]{multi-sp}) $\bar F^{x \mapsto x \cdot m}_N(0,q)=\psi_{\lambda_N}(q;x)$ and thus, under the assumption that $\lambda_N$ converges to $\lambda_\infty$,
\begin{e*}
    \lim_{N\to\infty} \bar F^{x \mapsto x \cdot m}_N(0,q) = \psi_{\lambda_\infty}(q;x).
\end{e*}
In the current setting, $\psi_\s$ has the same form as $\psi$ in~\eqref{e.psi(q;x)=} in the vector spin glass. So, each $\psi_\s$ satisfies the properties in Proposition~\ref{p.psi_smooth} with $\mcl Q_\sfp$ therein replaced by $\mcl Q_\sfp(1)$.

To transfer these properties to $\psi_{\lambda_\infty}$, we first recall the norms on $\mcl Q^\sS_\sfp$. We view $\mcl Q^\sS_\sfp$ as embedded into $L^\sfp([0,1);\R^\sS)$, equiped with the norm 
\begin{align*}
    |\kappa|_{L^\sfp} =  \Big(\int_0^1\Big(\sum_{\s\in\sS}\kappa_\s(r)^2\Big)^\frac{\sfp}{2}\d r\Big)^\frac{1}{\sfp}.
\end{align*} 
Observe that at $\sfp=2$, we have $|\kappa|_{L^2}^2=\sum_{\s\in\sS}|\kappa_\s|_{L^2}^2$.

One can verify that Proposition~\ref{p.f=UZ}, Corollary~\ref{c.limF_N=f}, and Lemma~\ref{l.varphi} hold with $t_\star$ defined in~\eqref{e.t_star=-ms}. Indeed, checking the proofs, one can see that the definition of $t_\star$ (previously in~\eqref{e.t_star=}) are used in~\eqref{e.|nabla^2xi(nablapsi)nabla^2psi<}, the line below \eqref{e.q=X(t,Z(t,q))=Z...}, \eqref{e.pf.l.unique crit point}, 
and \eqref{e.t_star_used_here_2}. Then, one can verify that the definition in~\eqref{e.t_star=-ms} still works in all these cases. To restate and reprove these four propositions, one needs to replace $\mcl Q_\sfp$, $\psi$ (see~\eqref{e.psi(q;x)=}), $\S^D_+$, and $\R^d$ therein by $\mcl Q^\sS_\sfp$, $\psi_{\lambda_\infty}$, $\R^\sS_+$, and $\prod_{\s\in\sS}\R^{d_\s}$, respectively. Cited results from~\cite{chenmourrat2023cavity} for vector spin glasses can be replaced by counterparts in~\cite{multi-sp} for multi-species spin glasses. 

Once those three results are proven in the setting of multi-species models, one can straightforwardly use the same arguments as in the rest of Section~\ref{s.proof_main_results} to finish the proofs of Theorems~\ref{t.free_energy-ms}, \ref{t.ldp-ms}, and~\ref{t.rs-ms}.

\small
\bibliographystyle{plain}
\bibliography{ref}

\end{document}